\newtheorem{theorem}{Theorem}
\newtheorem{lemma}[theorem]{Lemma}
\newtheorem{problem}[theorem]{Main Problem}
\newtheorem{rem}[theorem]{Remark}
\newcommand{\calC}{{\mathcal C}}
\newcommand{\calM}{{\mathcal M}}
\newcommand{\calP}{{\mathcal P}}
\newcommand{\ivC}{\textbf{\textsl{c}}}
\newcommand{\ivP}{\textbf{\textsl{p}}}
\newcommand{\ivS}{\textbf{\textsl{s}}}
\newcommand{\ivT}{\textbf{\textsl{t}}}
\newcommand{\ivX}{\textbf{\textsl{x}}}
\newcommand{\ivY}{\textbf{\textsl{y}}}
\newcommand{\ivZ}{\textbf{\textsl{z}}}
\newcommand{\ivXX}{\textbf{\textsl{X}}}
\newcommand{\B}{\mathbb}
\newcommand{\R}{{\B R}}
\newcommand{\lo}[1]{\underline{#1}}
\newcommand{\hi}[1]{\overline{#1}}
\newcommand{\cc}{\bf\tt}
\newcommand{\comment}[1]{\mbox{}}
\title{The number of relative equilibria in the PCR4PB}
\author{
Jordi-Llu\'is Figueras\\
Department of Mathematics\\
Uppsala University\\
Box 480, Uppsala, Sweden \\
{\tt figueras}{\rm @}{\tt math.uu.se}
\and
Warwick Tucker\\
School of Mathematics\\
Monash University\\
Melbourne, Australia \\
{\tt warwick.tucker}{\rm @}{\tt monash.edu}
\and
Piotr Zgliczynski \\
Institute of Computer Science\\
Jagiellonian University\\
30--348 Krakow, Poland\\
{\tt umzglicz}{\rm @}{\tt cyf-kr.edu.pl}
}
\begin{document}
\maketitle

\abstract{The aim of this paper is to present a new, analytical, method for computing the exact number of relative equilibria in the planar, circular, restricted 4-body problem of celestial mechanics. The new approach allows for a very efficient computer-aided proof, and opens a potential pathway to proving harder instances of the $n$-body problem.}

\setlength{\parindent}{0cm}
\setlength{\parskip}{1.5ex}

\section{Introduction}\label{sec:introduction}

The relative equilibria of the $3$--body problem have been known for centuries. In terms of equivalence classes, there are -- irrespectively of the masses -- exactly five relative equilibria. Three of these are the collinear configurations discovered by Euler \cite{Euler1767}; the remaining two are Lagrange's equilateral triangles \cite{Lagrange1772}, see Figure~\ref{fig:relEq1-2}.

\begin{figure}[ht]
\begin{center}
\includegraphics[width=0.25\linewidth]{./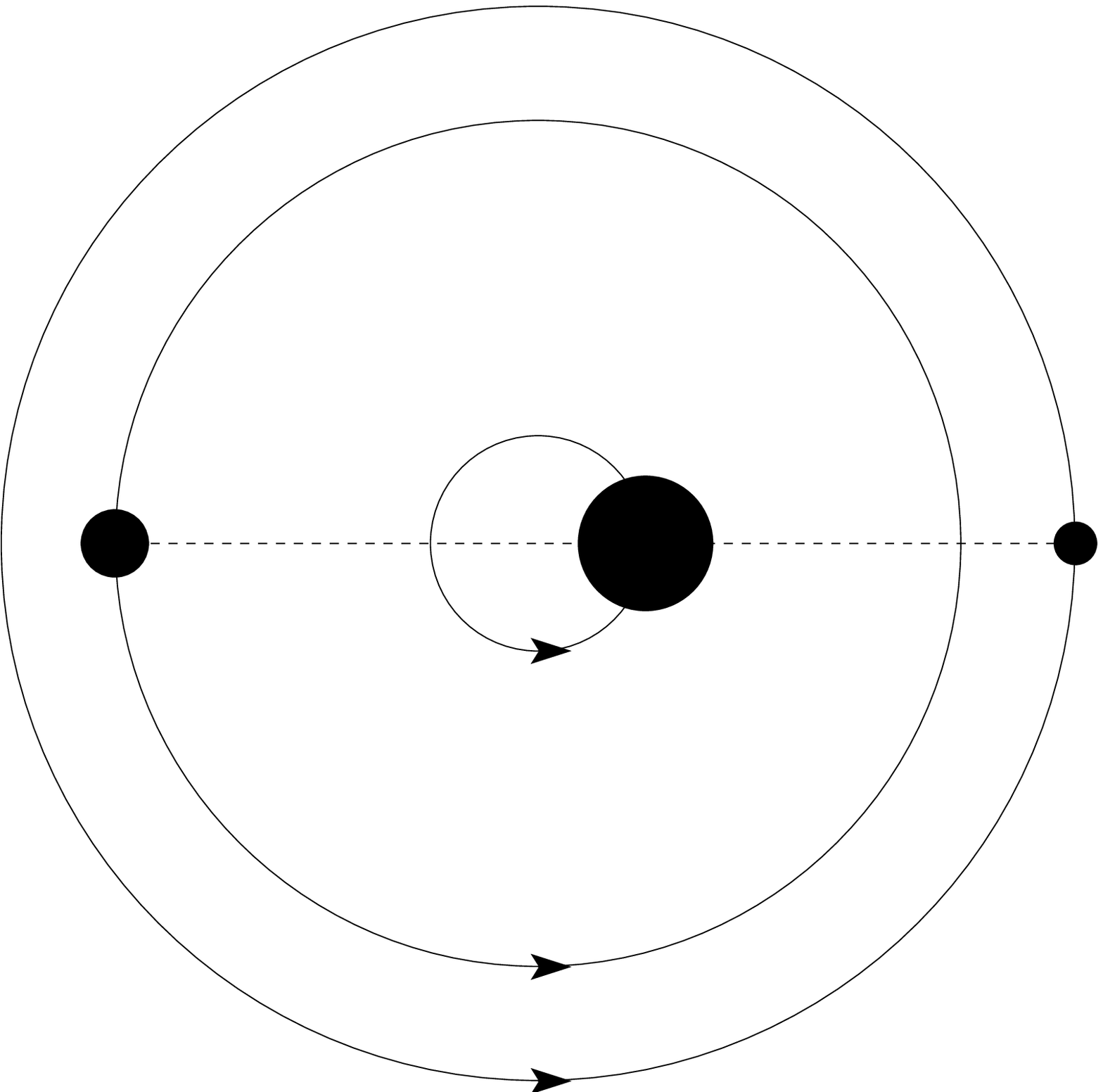}
\hspace{0.15\linewidth}
\includegraphics[width=0.25\linewidth]{./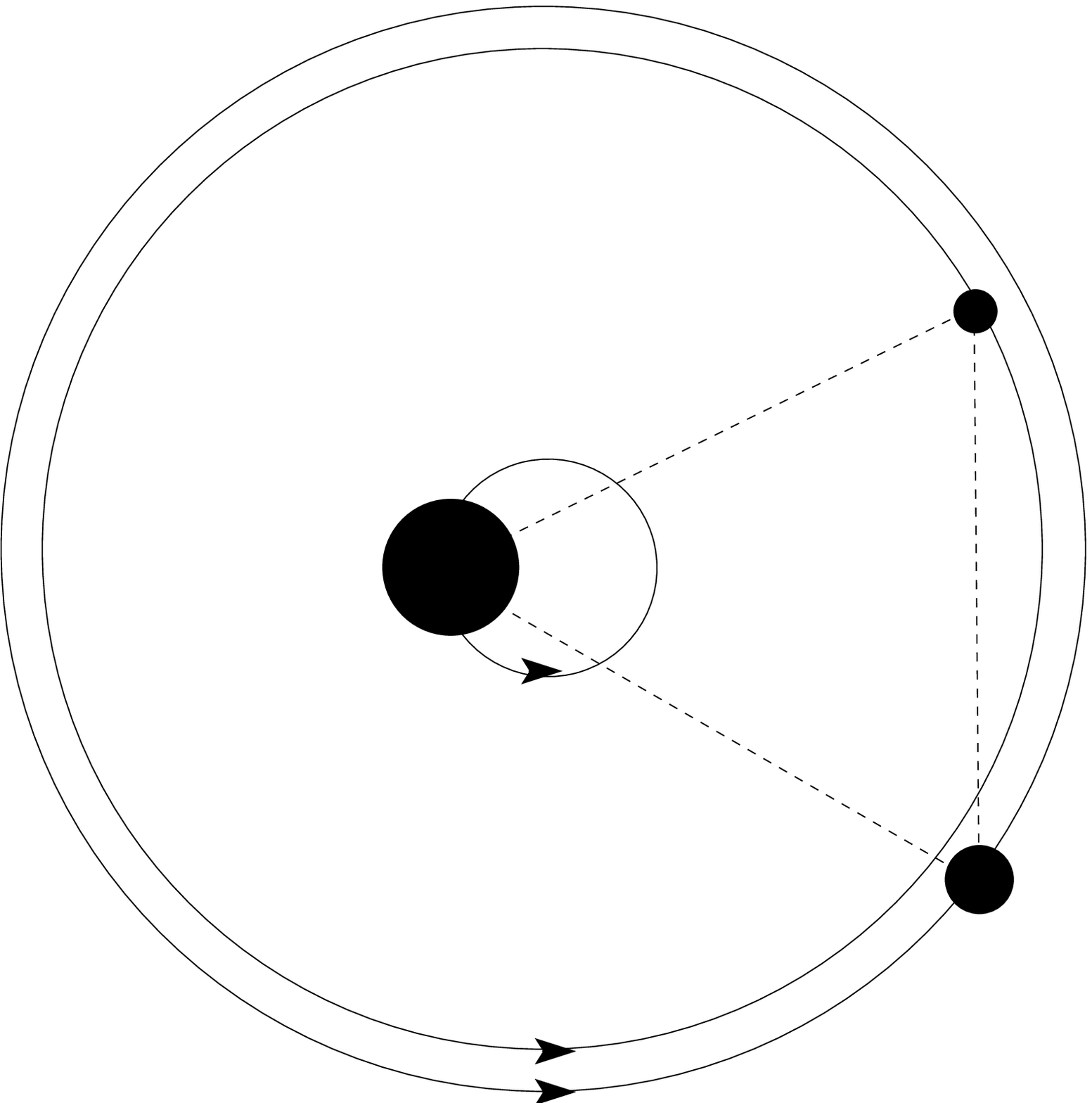}
\end{center}
\captionsetup{width=0.8\linewidth}
\caption{Relative equilibria for the $3$--body problem. (a) The collinear case of Euler; (b) The equilateral triangle case of Lagrange.}
\label{fig:relEq1-2}
\end{figure}

The collinear configurations found by Euler have been generalized for $n$ bodies by Moulton \cite{Moulton1910}. There are exactly $n!/2$ such collinear equivalence classes.

In 2006, Hamilton and Moeckel \cite{HamtonMoeckel06} proved that the number of relative equilibria of the Newtonian $4$--body problem is finite (always between 32 and 8472). Their computer-aided proof is based on symbolic and exact integer computations. The upper bound 8472 is believed to be a large overestimation; numerical simulations suggest that no more than 50 equilibria exists, see e.g. \cite{Simo1978}.

Albouy and Kaloshin \cite{AlbouyKaloshin12} almost settled the question of finiteness for $n = 5$ bodies. They proved that there are finitely many relative equilibria in the Newtonian $5$--body problem, except perhaps if the $5$--tuple of positive masses belongs to a given co-dimension--2 subvariety of the mass space. By B\`ezout's theorem, an upper bound on the number of relative equilibria is obtained (outside the exceptional subvariety), but the authors conclude \textit{``However, the bound is so bad that we avoid writing it explicitly''}.

Relaxing the positivity of the masses can produce a continuum of relative equilibria. In \cite{Roberts1999} Roberts demonstrated this for the $5$--body problem with one negative mass.

Looking at the restricted $4$--body problem (i.e. when one of the planets has an infinitesimally small mass), Kulevich et al \cite{KulevichEtAl10} proved finiteness with an upper bound of 196 relative equilibria. This, however, was assumed to be a great overestimation, and later Barros and Leandro \cite{BarrosLeandro11, BarrosLeandro14} proved that there could only be 8, 9 or 10 relative equilibria -- depending on the three primary masses. The proof is based on techniques from algebraic (or tropical) geometry, used to count solutions to systems of very large polyomial equations.

In this paper, we present a new approach to counting relative equilibria in various settings. We use techniques from real analysis rather than (complex) algebraic geometry, an approach we believe will generalize better to more complicated settings such as the full 4--body problem, which still remains unresolved. Moreover, the techniques presented here do not use algebraic properties of the system, only differentiability. This quality could play a role in more general contexts, like in curved spaces or in physical systems where the potential is not given by Newton's laws of gravitation.

In what follows, we will focus on the planar, circular, restricted 4--body problem, and give a new proof of the results of Barros and Leandro. In this setting, the three primary bodies form an equilateral triangle as in Figure~\ref{fig:relEq1-2} (b).

\section{Formulating the problem}

Let $m_1, m_2, m_3$ denote the positive masses of the three primaries, and let $p_1, p_2, p_3$ denote their positions in $\mathbb{R}^2$, which
form an equilateral triangle. Also let $z$ be the position of the fourth (weightless) body. In this setting, the gravitational pull on $z$ is described by the \textit{amended potential}:
\begin{equation}\label{eq:potential_function}
 V(z;m) = \frac{1}{2}\|z - c\|^2 + \sum_{i=1}^3 m_i\|z - p_i\|^{-1}.
\end{equation}
Here $c$ denotes the center of mass of the primaries. It follows that the locations of the relative equilibria are given by the critical points of $V$. Thus, the challenge of counting the number of relative equilibria can be translated into the task of counting the critical points of $V$, given the appropriate search region $\calC\times\calM$. Here $\calC \subset \mathbb{R}^2$ is the set of positions the of weightless body, and $\calM$ is the set of masses, discussed in Sections~\ref{subsec:the_configuration_space} and~\ref{subsec:the_mass_space}, respectively.

We are now prepared to formulate the main problem of this study:

\begin{problem}\label{pro:original}
How many solutions can the critical equation $\nabla_z V(z;m) = 0$ have (in $\calC$) when $m \in\calM$?
\end{problem}

\begin{figure}[ht]
\begin{center}
\includegraphics[width=0.49\linewidth]{./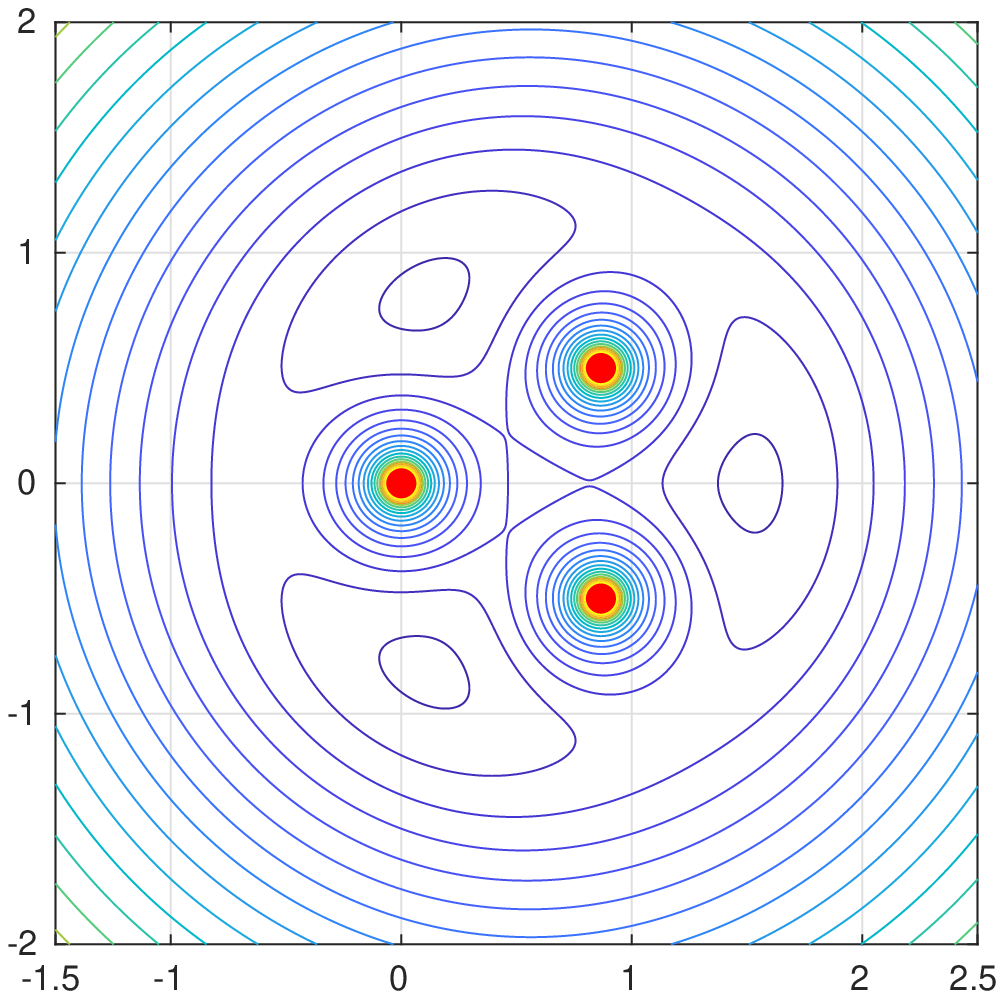}
\hspace{-0.1cm}
\includegraphics[width=0.49\linewidth]{./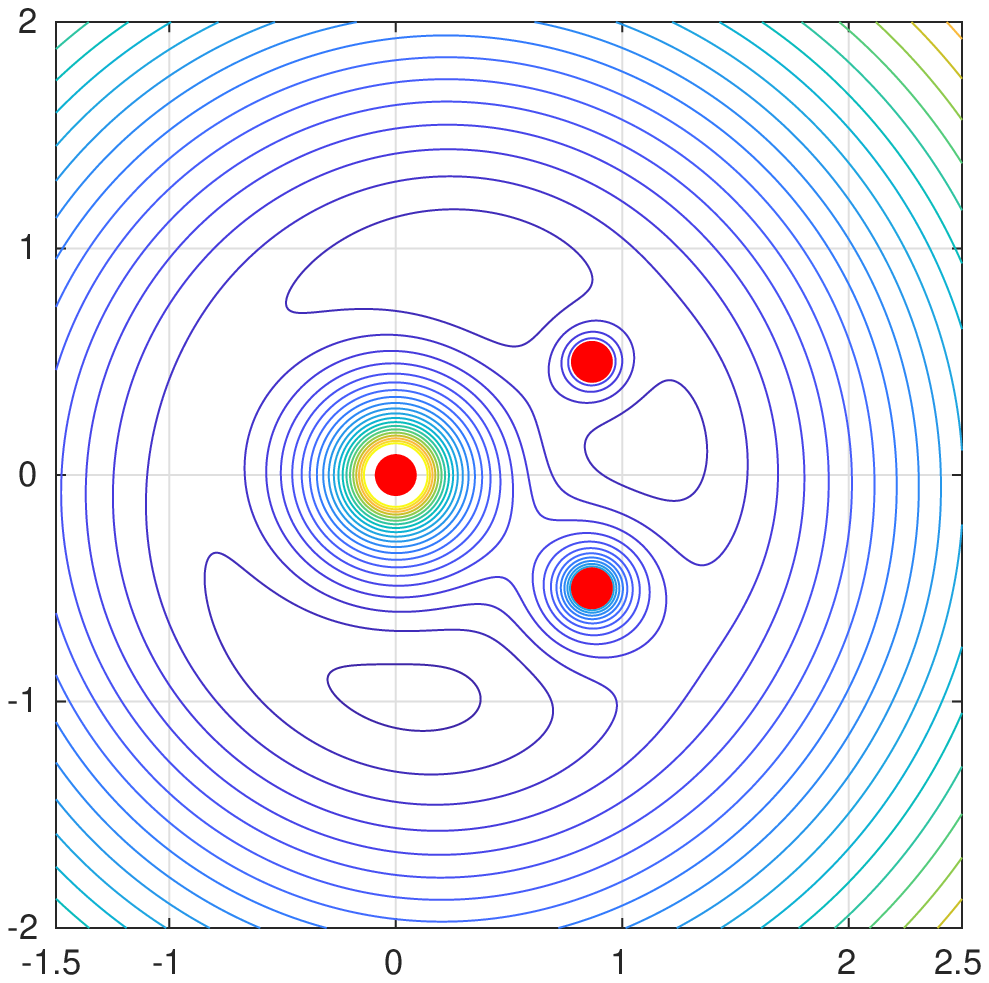}
\end{center}
\captionsetup{width=0.8\linewidth}
\caption{Level curves of the amended potential (\ref{eq:potential_function}) with the three primaries forming an equilateral triangle. (a) Equal masses: $m_1 = m_2 = m_3 = 1/3$; (b) Different masses: $m_1 = 1/16$, $m_2 = 3/16$, $m_3 = 12/16$ with the heaviest primary located at the origin.}
\label{fig:amendedPotential}
\end{figure}

With our approach, there are three difficulties that must be resolved in order to be successful:
\begin{enumerate}
\item The potential $V$ (and its gradient) is singular at the three primaries.
\item When the mass of a primary tends to zero, several critical points of $V$ tend to that primary.
\item As masses (not close to zero) are varied, the number of critical points of $V$ may change due to bifurcations taking place.
\end{enumerate}

When all masses are uniformly bounded away from zero, the singularities at the primaries (case 1) can be handled by proving that no critical points can reside in certain small disks centered at the primaries. This is explained further in section~\ref{subsec:the_configuration_space}.

For masses approaching zero (case 2), we end up with a multi-restricted\footnote{A restricted $n$-body problem is said to be of type $m+k$ is there are $m$ bodies with positive mass and $k$ weightless bodies, and $n = m+k$.} problem of type $2 + 2$ ($m_1\to 0$) or of type $1 + 3$ ($m_1\to 0$ and $m_2\to 0$). Both scenarios can be resolved by desingularizing the potential $V$ along the circle $\|z - p_3\| = 1$. More about this in sections~\ref{sec:polar_coordinates} and ~\ref{subsubsec:small_masses}.

Out of these difficulties, the bifurcations (case 3) are the easiest to resolve. In principle, this only requires verifying the sign of certain combinations of partial derivatives of the potential $V$.  We will address this in section~\ref{subsec:bifurcation_analysis}.

\subsection{The mass space $\calM$}\label{subsec:the_mass_space}

Without loss of generality, we may assume that the masses of the primaries are normalized $m_1 + m_2 + m_3 = 1$ and ordered $0 < m_1 \le m_2\le m_3$. Call this set $\calM$:

\begin{equation}\label{eq:mass_space}
\calM = \{m\in\mathbb{R}^3 \colon m_1 + m_2 + m_3 = 1 \textrm{ and } 0 < m_1 \le m_2\le m_3\}.
\end{equation}

We illustrate the unordered and ordered mass space in Figure~\ref{fig:mass_space}. Note that the midpoint of the large triangle corresponds to all masses being equal: $m = (\tfrac{1}{3},\tfrac{1}{3},\tfrac{1}{3})$.

\begin{figure}[ht]
\psfrag{l1}{$m = (1,0,0)$}
\psfrag{L2}{$m = (0,\tfrac{1}{2},\tfrac{1}{2})$}
\psfrag{L3}{$m = (0,0,1)$}
\psfrag{L4}{$m^\star$}
\psfrag{L5}{$m = (\tfrac{1}{3},\tfrac{1}{3},\tfrac{1}{3})$}
\begin{center}
\hspace*{-25mm}
\includegraphics[width=0.4\linewidth]{./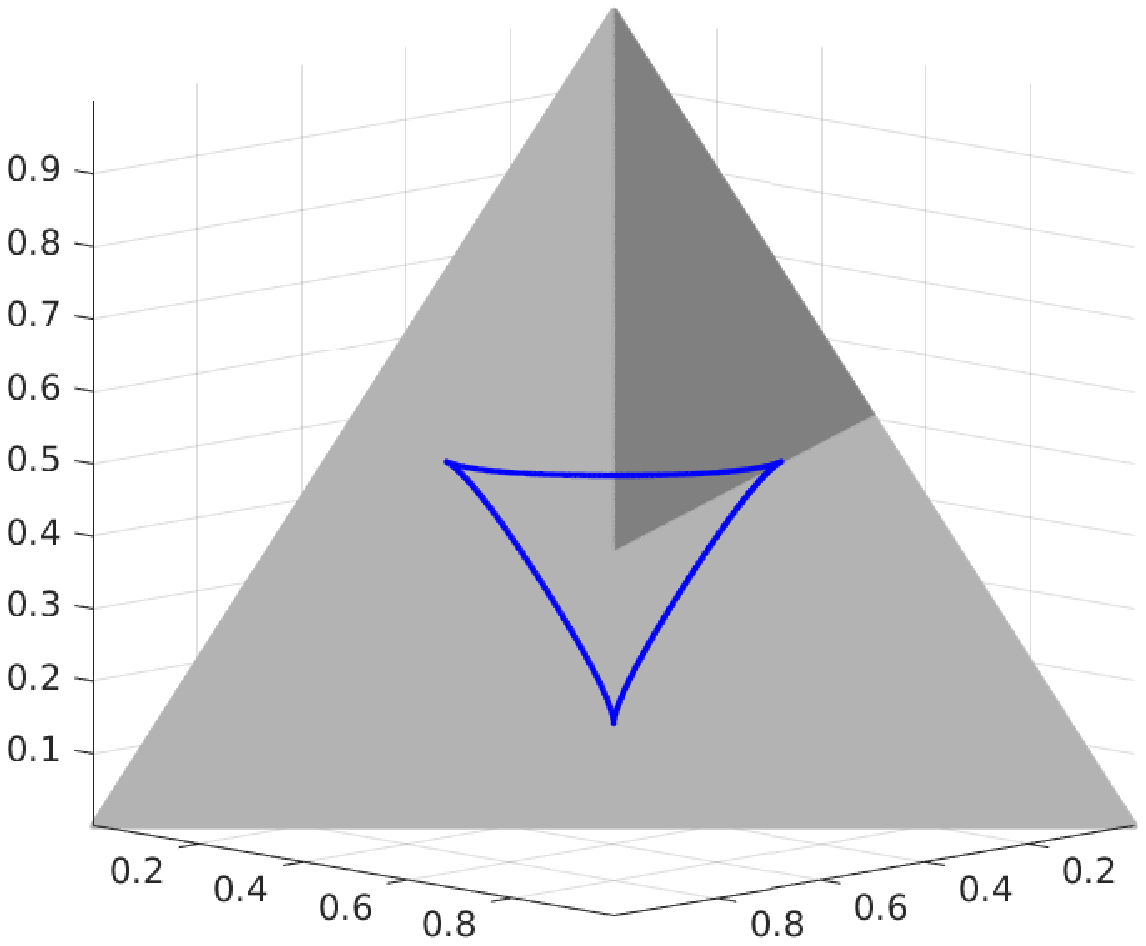}
\hspace*{15mm}
\includegraphics[width=0.125\linewidth]{./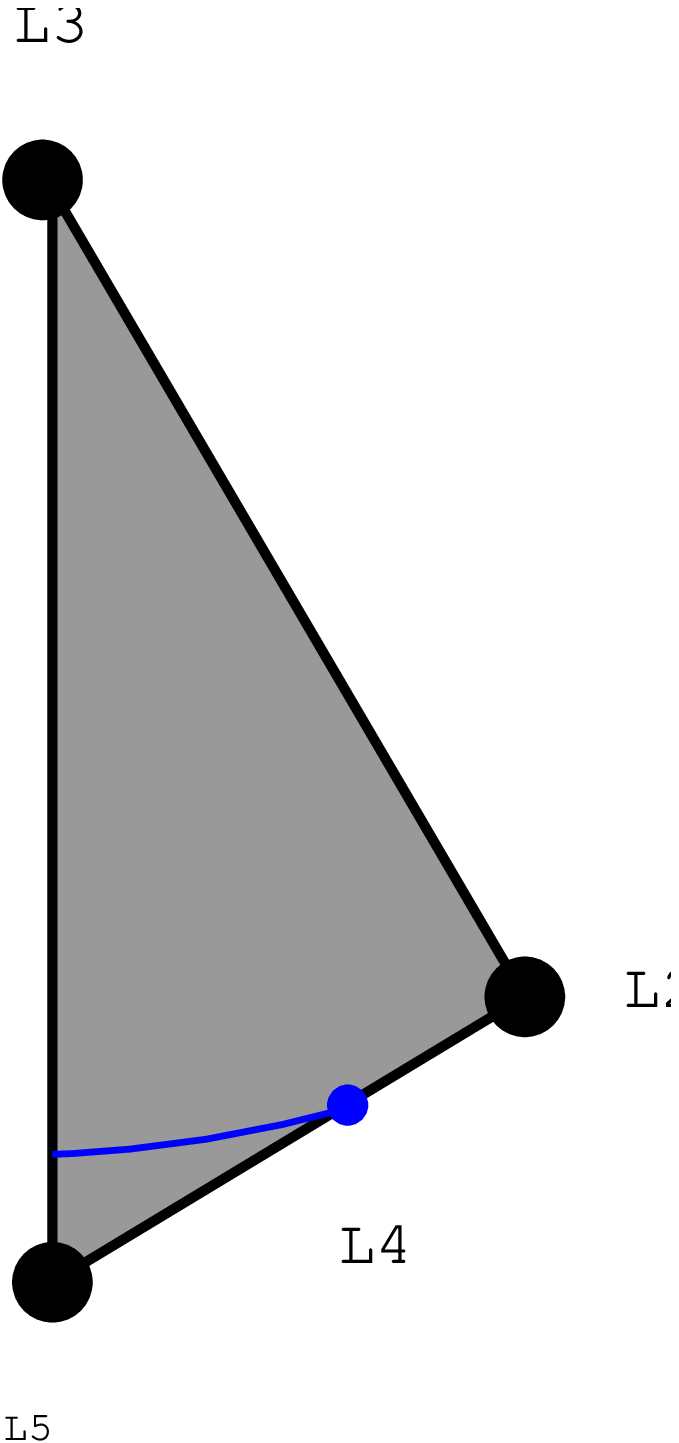}
\end{center}
\captionsetup{width=0.8\linewidth}
\caption{(a) The normalised mass space $m_1 + m_2 + m_3 = 1$ in barycentric coordinates, with the region of ordered masses ($0 < m_1 \le m_2 \le m_3$) highlighted in a darker shade. The blue curve illustrates the set on which bifurcations take place. (b) $1/6$ of the normalized mass space corresponding to the space $\calM$. The mass $m^\star$ corresponds to a cubic-type bifurcation.}
\label{fig:mass_space}
\end{figure}

The bifurcations taking place are of two kinds: quadratic and cubic. The latter are rare, and come from the inherent 6-fold symmetry of the normalized mass space, see Figure~\ref{fig:mass_space}. The quadratic bifucations are of a saddle-node type, in which two unique solutions approach each other, merge at the bifucration, and no longer exists beyond the point of bifurcation, see Figure~\ref{fig:bifurc}.

\begin{figure}[h]
\begin{center}
\includegraphics[width=0.2\linewidth]{./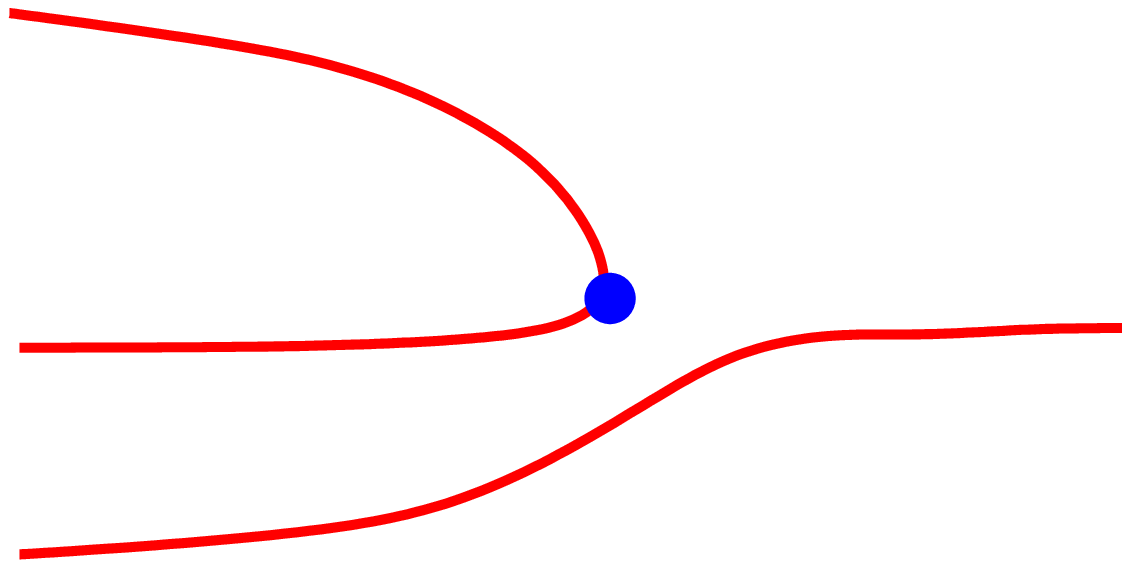}
\hspace*{9mm}
\includegraphics[width=0.2\linewidth]{./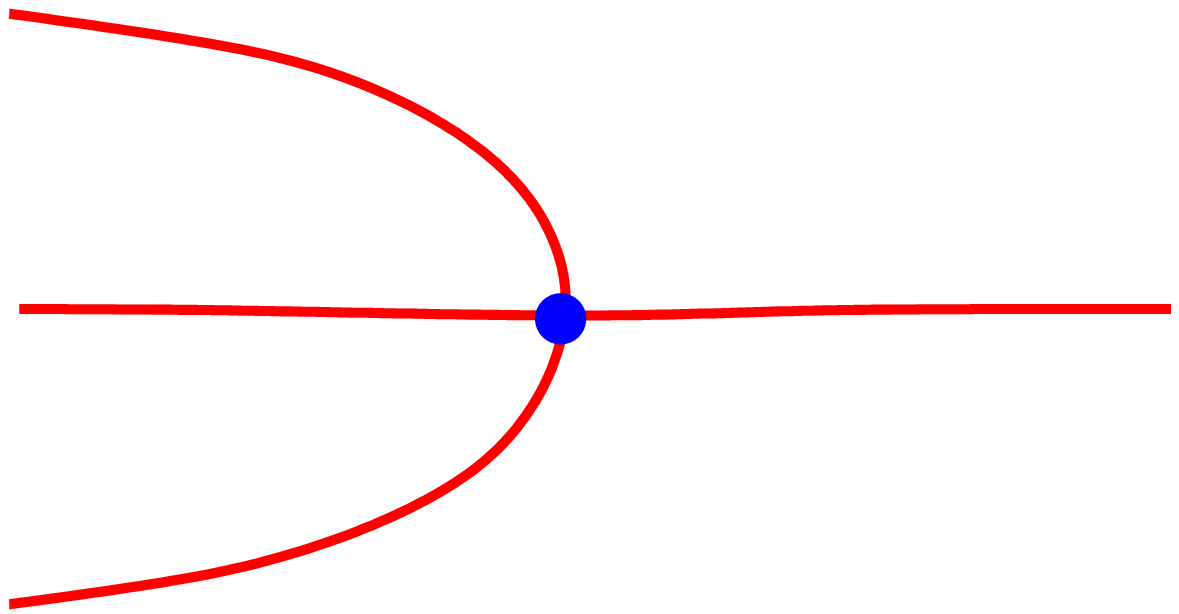}
\hspace*{9mm}
\includegraphics[width=0.2\linewidth]{./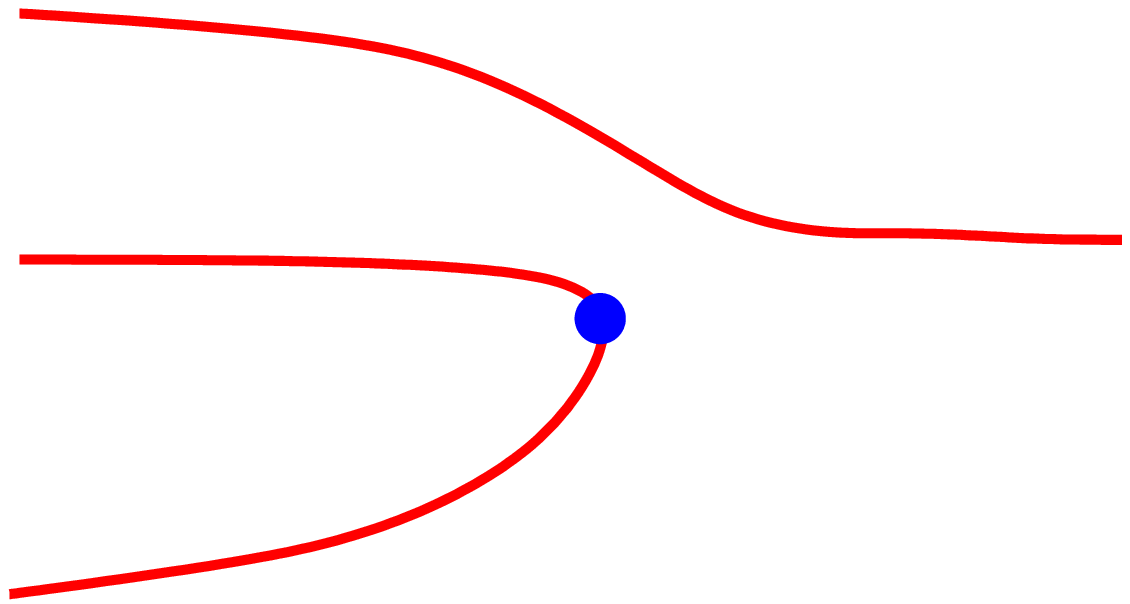}
\end{center}
\captionsetup{width=0.8\linewidth}
\caption{The typical quadratic bifurcation (blue dot), in which two solutions merge and vanish, becomes a cubic-type bifurcation at $m^\star$ due to symmetry.}
\label{fig:bifurc}
\end{figure}

In section~\ref{subsec:bifurcation_analysis} we will present the mathematics needed to resolve the two types of bifurcations taking place.

\subsection{The configuration space $\calC$}\label{subsec:the_configuration_space}

Without loss of generality, we may fix the positions of the three primaries: $p_1 = (\tfrac{\sqrt{3}}{2}, +\tfrac{1}{2})$, $p_2 = (\tfrac{\sqrt{3}}{2}, -\tfrac{1}{2},)$, and $p_3 = (0,0)$, thus forming an equilateral triangle with unit length sides, as in Figure~\ref{fig:relEq1-2} b.

We begin by deriving some basic results that will be used later on. Let $\nabla_z V(z;m)$ denote the gradient (with respect to $z$) of the potential $V$. A relative equilibria is then simply a solution to the equation $\nabla_z V(z;m) = 0$, i.e., a critical point of $V(z;m)$.

In what follows, it will be convenient to adopt the following notation:
\begin{equation}\label{eq:r_i}
r_i = r_i(z) = \|z - p_i\| \quad\quad (i = 1,2,3).
\end{equation}

In determining the relevant configuration space, we will use the following two exlusion results (Lemma~\ref{lemma:near_distance_test_2} and Lemma~\ref{lemma:near_distance_test_4}, respectively):
\begin{itemize}
\item Assume that $m_3 \geq 1/3$. If $r_3(z) \leq 1/3$, then $z$ is not a critical point of the potential $V$.
\item If $r_i(z) \ge 2$ for some $i =1,2,3$, then $z$ is not a critical point of the potential $V$.
\end{itemize}

Combining these two results, we see that, for $m\in\calM$, all relative equilibria must satisfy $1/3\le r_3\le 2$. We will take this to be our global search region $\calC$ in configuration space:
\begin{equation}\label{eq:configuration_space}
\calC = \{z\in\mathbb{R}^2 \colon 1/3 \le \|z\| \le 2\}.
\end{equation}

A more detailed analysis reveals that the relative eqililibria all reside in an even smaller subset of $\calC$, see Figure~\ref{fig:configuration_space}(a). We will, however, not use this level of detail in our computations.

\begin{figure}[h]
\begin{center}
\includegraphics[width=0.45\linewidth]{./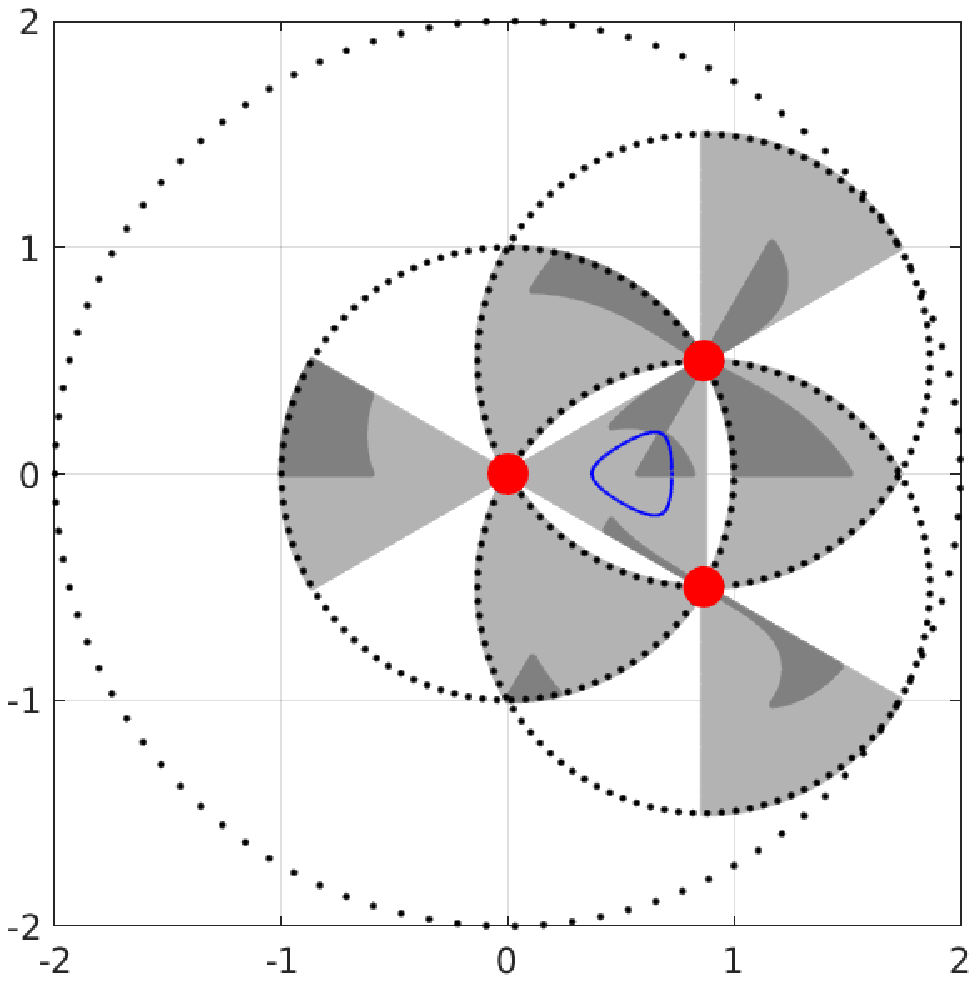}
\hspace{-5mm}
\includegraphics[width=0.45\linewidth]{./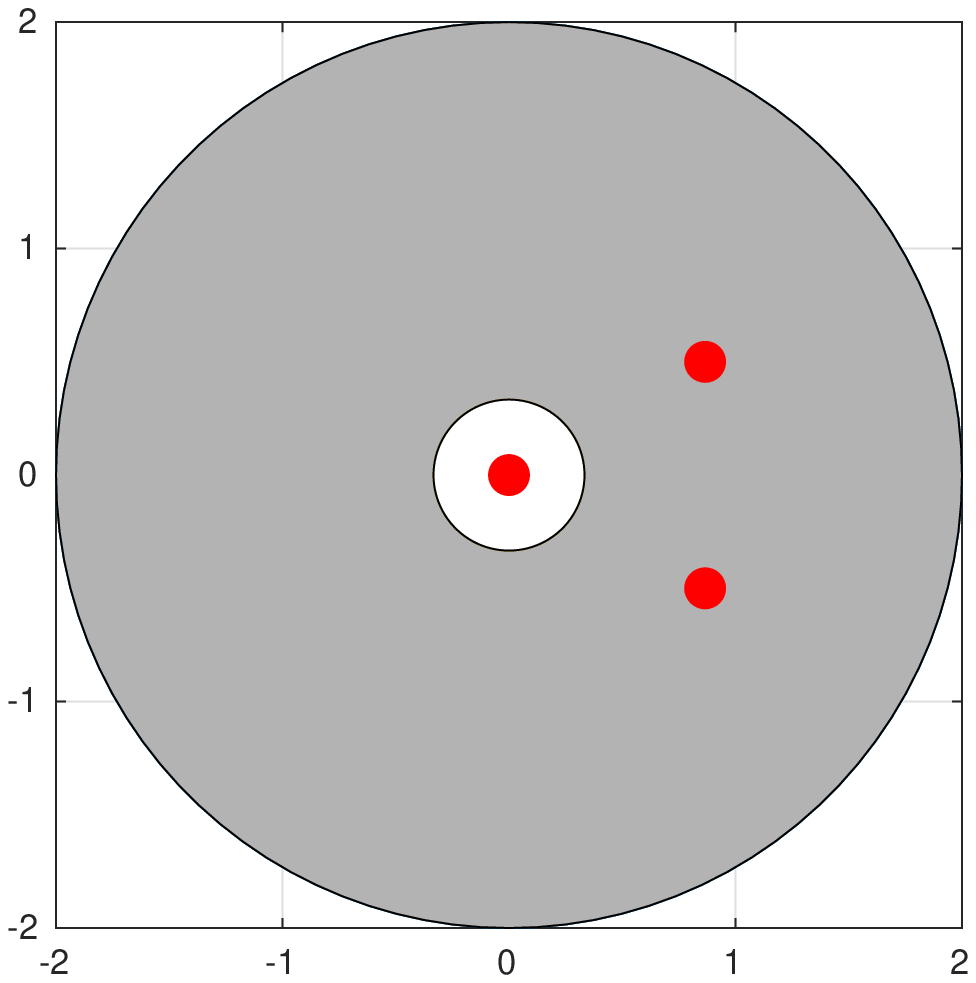}
\end{center}
\captionsetup{width=0.8\linewidth}
\caption{The configuration space with the three primaries $p_1, p_2, p_3$ (red points) spanning an equilateral triange. (a) The seven regions (colored light grey) in phase space where all relative equilibria must reside. When the masses are ordered, all relative equilibria are restricted to a smaller subset (colored dark grey). The blue curve illustrates the set on which bifurcations take place. (b) The global search region $\calC = \{z\in\mathbb{R}^2 \colon 1/3 \le \|z\| \le 2\}$ of the configuration space. Note that the heaviest primary $p_3 = (0,0)$ does not belong to $\calC$.}
\label{fig:configuration_space}
\end{figure}

We end this section by deriving the two exclusion results used above. The critical equation $\nabla_z V(z;m) = 0$ can be written as
\begin{equation}\label{eq:crit-p-full}
 (z-c) +  m_1 \nabla \frac{1}{r_1} + m_2 \nabla \frac{1}{r_2} + m_3 \nabla \frac{1}{r_3}  = 0.
\end{equation}
Note that we have

\begin{equation}\label{eq:nabla-r}
\left\| \nabla \frac{1}{r_i} \right\| = \frac{1}{r_i^2} \qquad (i = 1,2,3).
\end{equation}

The following lemma provides an a priori bound on how close a solution of $\nabla_z V(z;m) = 0$ can be to one of the primaries.
\begin{lemma}\label{lemma:near_distance_test_1}
Let $z \in \mathbb{R}^2$ and set $r_i = r_i(z)$. If for some $i =1,2,3$ we have
\begin{eqnarray}
  \frac{m_i}{r_i^2} &>& \frac{1-m_i}{(1-r_i)^2} + r_i + 1, \label{eq:near_distance_test_1}
 \end{eqnarray}
then $z$ is not a critical point of the potential $V$.
\end{lemma}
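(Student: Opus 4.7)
The plan is a direct triangle-inequality estimate on the critical equation (\ref{eq:crit-p-full}): isolate the (potentially very large) term $m_i \nabla(1/r_i)$ on one side and show that the remaining three terms are too small to cancel it when $z$ is close to $p_i$. Before starting, I would note that one may assume $r_i < 1$, since otherwise $m_i/r_i^2 \le 1$ is already dominated by $r_i + 1 \ge 2$, so the hypothesis (\ref{eq:near_distance_test_1}) is vacuous on $\{r_i \ge 1\}$ and nothing needs to be proved there.

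The two ingredients are straightforward. First, because the primaries form an equilateral triangle of unit side and $c = m_1 p_1 + m_2 p_2 + m_3 p_3$ (with $m_1 + m_2 + m_3 = 1$) is their mass-weighted centroid, the identity $c - p_i = \sum_{j \ne i} m_j (p_j - p_i)$ and the triangle inequality give $\|c - p_i\| \le 1 - m_i \le 1$, whence $\|z - c\| \le r_i + 1$. Second, for $j \ne i$ the reverse triangle inequality yields $r_j \ge \|p_i - p_j\| - r_i = 1 - r_i > 0$, so by (\ref{eq:nabla-r}),
\begin{equation*}
\sum_{j \ne i} m_j \left\| \nabla \tfrac{1}{r_j} \right\| \;\le\; \frac{1 - m_i}{(1 - r_i)^2}.
\end{equation*}

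Assuming toward contradiction that $z$ is a critical point, I would move $m_i \nabla(1/r_i)$ to the left of (\ref{eq:crit-p-full}), take norms, and apply (\ref{eq:nabla-r}) together with the two bounds above to obtain
\begin{equation*}
\frac{m_i}{r_i^2} \;\le\; (r_i + 1) + \frac{1 - m_i}{(1 - r_i)^2},
\end{equation*}
which directly contradicts (\ref{eq:near_distance_test_1}). The contrapositive is exactly the statement of the lemma. The only step requiring any thought is the uniform (mass-independent) bound $\|c - p_i\| \le 1 - m_i$, but this falls out of convexity of $c$ and the unit side length without any case analysis, so I do not foresee a genuine obstacle.
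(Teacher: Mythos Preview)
Your argument is correct and follows essentially the same route as the paper: isolate the term $m_i\nabla(1/r_i)$ in (\ref{eq:crit-p-full}), use $r_j\ge 1-r_i$ for $j\ne i$ to bound the other gravitational terms, and use $\|z-c\|\le r_i+1$ for the centrifugal term. Your justification $\|c-p_i\|\le 1-m_i$ is in fact slightly sharper than what the paper invokes (it only uses $\|p_i-c\|\le 1$), but the resulting estimate and conclusion are identical.
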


\begin{proof}
First, we note that Equation (\ref{eq:crit-p-full}) can be rewritten as
\begin{equation}\label{eq:cct}
  m_1 \nabla \frac{1}{r_1}= -m_2 \nabla \frac{1}{r_2} - m_3 \nabla \frac{1}{r_3} - (z-c).
\end{equation}
We use this formulation, and consider small $r_1$, so that the norm of the left-hand side is larger than the norms of terms appearing to the right.

Observe that (\ref{eq:near_distance_test_1}) implies that $r_i < 1$, because $ \frac{m_i}{r_i^2} > 1$ and $m_i < 1$.
Hence, throughout the proof, we assume that $r_1 < 1$.

From the triangle inequality $r_1 + r_2 \geq 1$, we have
\begin{eqnarray*}
 r_2 &\geq& 1-r_1 , \\
 r_2^2 &\geq& (1-r_1)^2, \\
 \|z-c\| &\leq& \|z-p_1\| + \|p_1 - c\| \leq r_1 + 1.
\end{eqnarray*}

Using this together with (\ref{eq:near_distance_test_1}), we obtain
\begin{eqnarray*}
  \left\|-m_2 \nabla \frac{1}{r_2} -  m_3 \nabla \frac{1}{r_3} - (z-c) \right\| \leq \frac{m_2}{r_2^2} + \frac{m_3}{r_3^2} + r_1 + 1 \\
    \leq  \frac{m_2}{(1-r_1)^2} + \frac{m_3}{(1-r_1)^2} + r_1 + 1= \frac{1-m_1}{(1-r_1)^2} + r_1 + 1 \\
    <  \frac{m_1}{r_1^2} = \left\|m_1 \nabla \frac{1}{r_1}\right\|,
\end{eqnarray*}
hence (\ref{eq:cct}) is not satisfied.
\end{proof}
Since we are only considering ordered masses, we always have $m_3\ge 1/3$. This fact, combined with Lemma~\ref{lemma:near_distance_test_1}, immediately gives us a uniform bound for the primary $p_3$.

\begin{lemma}\label{lemma:near_distance_test_2}
Assume that $m_3 \geq 1/3$. If $r_3(z) \leq 1/3$, then $z$ is not a critical point of the potential $V$.
\end{lemma}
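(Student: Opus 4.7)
The plan is to reduce to Lemma~\ref{lemma:near_distance_test_1} applied with $i=3$ and simply verify the hypothesis under the given assumptions $m_3\ge 1/3$ and $r_3\le 1/3$. Concretely, I must show that
\begin{equation*}
\frac{m_3}{r_3^2} \;>\; \frac{1-m_3}{(1-r_3)^2} + r_3 + 1
\end{equation*}
whenever $(m_3,r_3)\in[1/3,1]\times(0,1/3]$, since both sides are then well-defined (in particular $1-r_3>0$).

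The key observation is a monotonicity argument on both sides of the inequality. The left-hand side $m_3/r_3^2$ is increasing in $m_3$ and decreasing in $r_3$, so it is minimized on the rectangle $[1/3,1]\times(0,1/3]$ at the corner $(m_3,r_3)=(1/3,1/3)$. The right-hand side is decreasing in $m_3$ (since $1-m_3$ appears in the numerator) and increasing in $r_3$ (both $(1-m_3)/(1-r_3)^2$ and $r_3+1$ are increasing in $r_3$ on $(0,1)$); hence it is maximized at the same corner $(1/3,1/3)$. So it suffices to check the single worst-case point.

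Evaluating at $(m_3,r_3)=(1/3,1/3)$:
\begin{equation*}
\text{LHS} = \frac{1/3}{1/9} = 3, \qquad
\text{RHS} = \frac{2/3}{4/9} + \frac{1}{3} + 1 = \frac{3}{2} + \frac{4}{3} = \frac{17}{6}.
\end{equation*}
Since $3 = 18/6 > 17/6$, the strict inequality (\ref{eq:near_distance_test_1}) holds throughout $[1/3,1]\times(0,1/3]$, so Lemma~\ref{lemma:near_distance_test_1} applies and $z$ cannot be a critical point of $V$. I do not anticipate any obstacle here: the previous lemma has absorbed all the analytic content, and what remains is a one-point numerical check supported by an elementary monotonicity argument.
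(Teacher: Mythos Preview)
Your proposal is correct and follows essentially the same approach as the paper: reduce to Lemma~\ref{lemma:near_distance_test_1} with $i=3$, bound the left-hand side below by $3$ and the right-hand side above by $17/6$ using the worst-case values $m_3=1/3$, $r_3=1/3$. The only difference is that you spell out the monotonicity in $(m_3,r_3)$ explicitly, whereas the paper leaves it implicit in the one-line estimate.
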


\begin{proof}
We verify (\ref{eq:near_distance_test_1}) with $i = 3$. Estimating each side of the inequality, we find
\begin{equation*}
  \frac{m_3}{r_3^2} \geq \frac{1/3}{(1/3)^2} = 3 \qquad\mathrm{and}\qquad
  \frac{1-m_3}{(1-r_3)^2} + r_3 + 1 \leq \frac{2/3}{(2/3)^2} + \frac{1}{3} + 1 = \frac{3}{2} + \frac{4}{3} = \frac{17}{6} < 3.
\end{equation*}
\end{proof}

\textbf{Remark:} For $m_3 \ge 1/3$, we can exclude the slightly larger region $r_3 \le 0.3405784$.

Note that Lemma~\ref{lemma:near_distance_test_1} can also be used to exclude a small disc centered at $p_i$ ($i = 1,2$) whenever $m_i$ is not too small. As an example, if $m_i\ge \varepsilon > 0$, then we can exclude the disc $r_i \le \tfrac{1}{2}\sqrt{\varepsilon}$.

Another exclusion principle is given by the following result.
\begin{lemma}\label{lemma:near_distance_test_3}
Let $z \in \mathbb{R}^2$ and set $r_i = r_i(z)$. If for some $i = 1,2,3$ we have
\begin{eqnarray}
  r_i - 1 &>& \frac{1-m_i}{(1-r_i)^2} + \frac{m_i}{r_i^2}, \label{eq:near_distance_test_3}
 \end{eqnarray}
then $z$ is not a critical point of the potential $V$.
\end{lemma}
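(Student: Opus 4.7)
The plan is to mirror the structure of the proof of Lemma~\ref{lemma:near_distance_test_1}, but now in the regime where $z$ is far from $p_i$. Note first that the hypothesis (\ref{eq:near_distance_test_3}) forces $r_i > 1$, since the right-hand side is strictly positive (we have $m_i > 0$). In this ``far-field'' regime, the term $(z-c)$ in the critical equation (\ref{eq:crit-p-full}) should dominate the gradient contributions, so I would isolate it,
\begin{equation*}
  (z-c) = -m_1 \nabla \frac{1}{r_1} - m_2 \nabla \frac{1}{r_2} - m_3 \nabla \frac{1}{r_3},
\end{equation*}
and derive an upper and a lower bound on $\|z-c\|$ that are incompatible under the hypothesis.

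For the upper bound, I would apply the triangle inequality together with (\ref{eq:nabla-r}), and estimate $r_j \geq r_i - \|p_i - p_j\| = r_i - 1$ for $j \neq i$ (a positive quantity because $r_i > 1$). This yields
\begin{equation*}
  \|z - c\| \leq \sum_{k=1}^{3} \frac{m_k}{r_k^2} \leq \frac{m_i}{r_i^2} + \frac{1-m_i}{(r_i-1)^2}.
\end{equation*}
For the lower bound, I would use that $c - p_i = \sum_{j \neq i} m_j (p_j - p_i)$ and $\|p_j - p_i\| = 1$, which gives $\|p_i - c\| \leq 1 - m_i \leq 1$, and hence $\|z - c\| \geq r_i - \|p_i - c\| \geq r_i - 1$.

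Combining the two bounds produces $r_i - 1 \leq \tfrac{m_i}{r_i^2} + \tfrac{1-m_i}{(r_i-1)^2}$, which contradicts (\ref{eq:near_distance_test_3}) after noting that $(1-r_i)^2 = (r_i-1)^2$. I do not foresee any substantive obstacle; the main bookkeeping remark is to record at the outset that (\ref{eq:near_distance_test_3}) implicitly forces $r_i > 1$, so that the triangle-inequality estimates involving $(r_i-1)^{-2}$ are meaningful and the lower bound on $\|z-c\|$ is nonnegative.
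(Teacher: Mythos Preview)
Your argument is correct and essentially identical to the paper's own proof: both isolate $(z-c)$ in the critical equation, observe that (\ref{eq:near_distance_test_3}) forces $r_i>1$, bound $\|z-c\|$ from below by $r_i-1$ via $\|p_i-c\|\le 1$, bound it from above via $r_j\ge r_i-1$ for $j\ne i$, and reach the same contradiction. The only cosmetic differences are that the paper fixes $i=3$ by symmetry and shifts coordinates, while you keep $i$ general and note the slightly sharper (but unused) estimate $\|p_i-c\|\le 1-m_i$.
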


\begin{proof}
The proof uses the idea that we may rewrite Equation \ref{eq:crit-p-full} as
\begin{equation}\label{eq:crit-p-full-2}
m_1 \nabla \frac{1}{r_1} + m_2 \nabla \frac{1}{r_2} + m_3 \nabla \frac{1}{r_3}  = -(z - c).
\end{equation}
Without any loss of the generality, we may assume that $i=3$ and shift coordinate frame so that $p_3$ is situated at the origin. Then $\|z\| = \|z - p_3\| = r_3$, and from (\ref{eq:near_distance_test_3}) (with $i=3$) it follows that $r_3 > 1$. This, together with the triangle inequality, gives
\begin{eqnarray}
  \|z-c\| \geq \|z\| - \|c\| \geq r_3 - 1.  \label{eq:z-cnorm}
\end{eqnarray}
Here we use the fact that the center of mass $c$ is located within the triangle spanned by the three primaries, and therefore $\|c\|\le 1$.

Applying the triangle inequality again, we have $r_i + 1 \geq r_3$ ($i=1,2$), from which it follows that
\begin{equation}\label{eq:rj-far-test}
  r_i \geq r_3 - 1 \qquad\mathrm{ and }\qquad r_i^2  \geq (r_3-1)^2 \qquad\qquad(i = 1,2).
\end{equation}

Therefore we obtain the following estimate (we use (\ref{eq:nabla-r},\ref{eq:rj-far-test},\ref{eq:z-cnorm}))
\begin{eqnarray*}
  \left\|  m_1 \nabla \frac{1}{r_1}  + m_2 \nabla \frac{1}{r_2} +  m_3 \nabla \frac{1}{r_3}\right\| \leq \frac{m_1}{r_1^2} + \frac{m_2}{r_2^2} + \frac{m_3}{r_3^2} &\leq& \\
  \frac{m_1}{(r_3-1)^2} + \frac{m_2}{(r_3-1)^2} + \frac{m_3}{r_3^2} =
  \frac{1-m_3}{(r_3-1)^2} + \frac{m_3}{r_3^2}  &<&  r_3 - 1 \leq \|z-c\|,
\end{eqnarray*}
hence (\ref{eq:crit-p-full-2}) is not satisfied.
\end{proof}

A direct consequence of Lemma~\ref{lemma:near_distance_test_3} is the following uniform bound.

\begin{lemma}\label{lemma:near_distance_test_4}
If $r_i(z) \ge 2$ for some $i =1,2,3$, then $z$ is not a critical point of the potential $V$.
\end{lemma}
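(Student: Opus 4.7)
The strategy is to apply Lemma~\ref{lemma:near_distance_test_3} directly: given the $i$ for which $r_i(z) \geq 2$, I would verify that inequality (\ref{eq:near_distance_test_3}) holds, which immediately yields the conclusion. Since $(1 - r_i)^2 = (r_i - 1)^2$, what I need to establish is
\[
    r_i - 1 \;>\; \frac{1 - m_i}{(r_i - 1)^2} + \frac{m_i}{r_i^2},
\]
where $m_i > 0$ because $m \in \calM$.

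The key observation is that, viewed as a function of $m_i$, the right-hand side is linear, and its slope $\frac{1}{r_i^2} - \frac{1}{(r_i - 1)^2}$ is negative whenever $r_i > 1/2$. Consequently the right-hand side is maximized (over $m_i \in [0,1]$) at $m_i = 0$, where it equals $\frac{1}{(r_i - 1)^2}$. So for the case $r_i > 2$ the inequality $(r_i - 1)^3 > 1$ is strict, hence the desired bound holds for every $m_i \in [0,1]$.

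The only delicate case is the boundary $r_i = 2$, where the worst-case right-hand side equals $1 = r_i - 1$. Here I would use the strict positivity of $m_i$: plugging $r_i = 2$ into the expression gives $\frac{1 - m_i}{1} + \frac{m_i}{4} = 1 - \tfrac{3}{4}m_i$, which is strictly less than $1$ precisely because $m_i > 0$. This closes the argument, and Lemma~\ref{lemma:near_distance_test_3} then rules out $z$ as a critical point.

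I expect no serious obstacles; the only subtlety is recognising that at $r_i = 2$ the uniform bound $\|c\| \leq 1$ is tight (attained only in the degenerate configuration where one mass is $1$ and the others are $0$), so one must explicitly invoke $m_i > 0$ rather than treating the inequality as automatic. The rest is a short monotonicity argument in $m_i$ combined with the cubic inequality $(r_i - 1)^3 \geq 1$.
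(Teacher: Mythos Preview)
Your proposal is correct and follows essentially the same route as the paper: apply Lemma~\ref{lemma:near_distance_test_3} and verify inequality~(\ref{eq:near_distance_test_3}) using $r_i \ge 2$ together with $m_i > 0$. The paper's version is slightly more streamlined in that it bounds both terms at once, obtaining $\frac{1-m_i}{(r_i-1)^2} + \frac{m_i}{r_i^2} \le (1-m_i) + \tfrac{m_i}{4} = 1 - \tfrac{3m_i}{4} < 1 \le r_i - 1$, thereby avoiding your case split between $r_i > 2$ and $r_i = 2$.
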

\begin{proof}
Using only $r_i(z) \ge 2$, we verify (\ref{eq:near_distance_test_3}). Indeed, a straight forward computation gives:
\begin{eqnarray*}
  \frac{1-m_i}{(r_i-1)^2} + \frac{m_i}{r_i^2} \leq 1-m_i + \frac{m_i}{4}=1-\frac{3 m_i}{4} < 1 \leq r_i -1.
\end{eqnarray*}
\end{proof}

\section{Reparametrizing the masses}\label{sec:reparametrizing_the_masses}

Due to the normalisation $m_1 + m_2 + m_3 = 1$, the mass space can be viewed as a 2-dimensional set parametrized by $m_1$ and $m_2$. Instead of working directly with the masses $(m_1, m_2)$, we introduce the following non-linear, singular transformation:
\begin{equation}\label{eq:s_and_t}
s = \frac{m_1}{m_1 + m_2}\qquad\mathrm{ and }\qquad t = m_1 + m_2,
\end{equation}
The new parameters $(s,t)$ can be transformed back to mass space via the inverse transformation:
\begin{equation}\label{eq:inv_s_and_t}
m_1 = st, \qquad m_2 = (1-s)t,\qquad (\textrm{and } m_3 = 1 - t).
\end{equation}

The reason for working in the mass space using $(s,t)$-coordinates is as follows: when $m_1$ and $m_2$ tend to zero, then some relative equilibria may move in a non-continuous way and no limit exists. This makes our kind of study virtually impossible. When seen in $(s,t)$-coordinates, however, the movements are regular and  amenable to our computer assisted techniques.

In the $(m_1,m_2)$-space, the ordered mass space $\calM$ is a right-angled triangle, see Figure~\ref{fig:mass_space}(b). Under the transformation (\ref{eq:s_and_t}) it is mapped to a non-linear 2-dimensional region $\tilde\calP$. Taking $\calP$ to be the rectangular hull of $\tilde\calP$, we have our new parameter region, see Figure~\ref{fig:parameter_space_zoom1}. The three vertices of $\calM$ are mapped into $\calP$ according to the following transformations:
\begin{eqnarray*}
(m_1, m_2) = (1/3,1/3) & \mapsto & (s,t) = (1/2, 2/3) \\
(m_1, m_2) = (0, 1/2) & \mapsto & (s,t) = (0, 1/2) \\
(m_1, m_2) = (0, 0) & \mapsto & (s,t) = ([0, 1/2], 0).
\end{eqnarray*}
Note how the single point $(m_1, m_2) = (0, 0)$ is mapped to the line segment $(s,t) = ([0, 1/2], 0)$ when taking all possible limits from within $\calM$. This desingularization is the main reason for moving to the $(s,t)$-space; it gives us a better control when masses are near the multi-restricted cases $m_1 = 0$ and $(m_1, m_2) = (0, 0)$.

\vspace*{3mm}
\begin{figure}[h]
\psfrag{Ptilde}{$\tilde\calP$}
\psfrag{st1}{$\hspace*{-1.2cm}(s,t) = (0, 0)$}
\psfrag{st2}{$\hspace*{-1.1cm}(s,t) = (\tfrac{1}{2}, 0)$}
\psfrag{st3}{$\hspace*{-1.1cm}(s,t) = (\tfrac{1}{2}, \tfrac{2}{3})$}
\psfrag{st4}{$\hspace*{-2.2cm}(s,t) = (0, \tfrac{1}{2})$}
\psfrag{st5}{$\hspace*{-0.9cm}(s^\star, t^\star)$}
\begin{center}
\includegraphics[width=0.25\linewidth]{./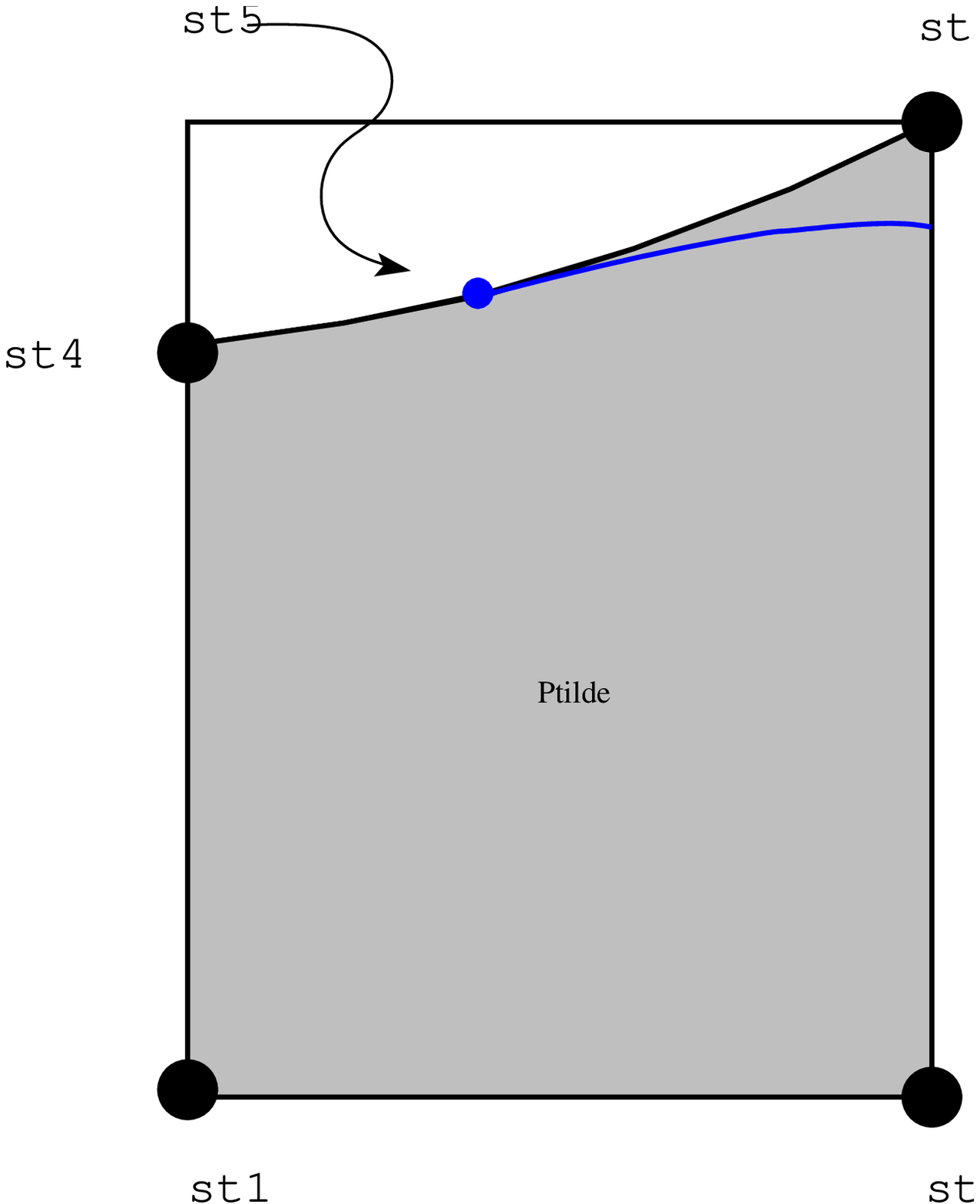}
\hspace*{25mm}
\psfrag{st1b}{}
\psfrag{st2b}{$\hspace*{0cm}\calP_1$}
\psfrag{st3b}{$\hspace*{0.0cm}\calP_3$}
\psfrag{st4b}{}
\psfrag{st5b}{$\hspace*{0.0cm}\calP_2$}
\includegraphics[width=0.25\linewidth]{./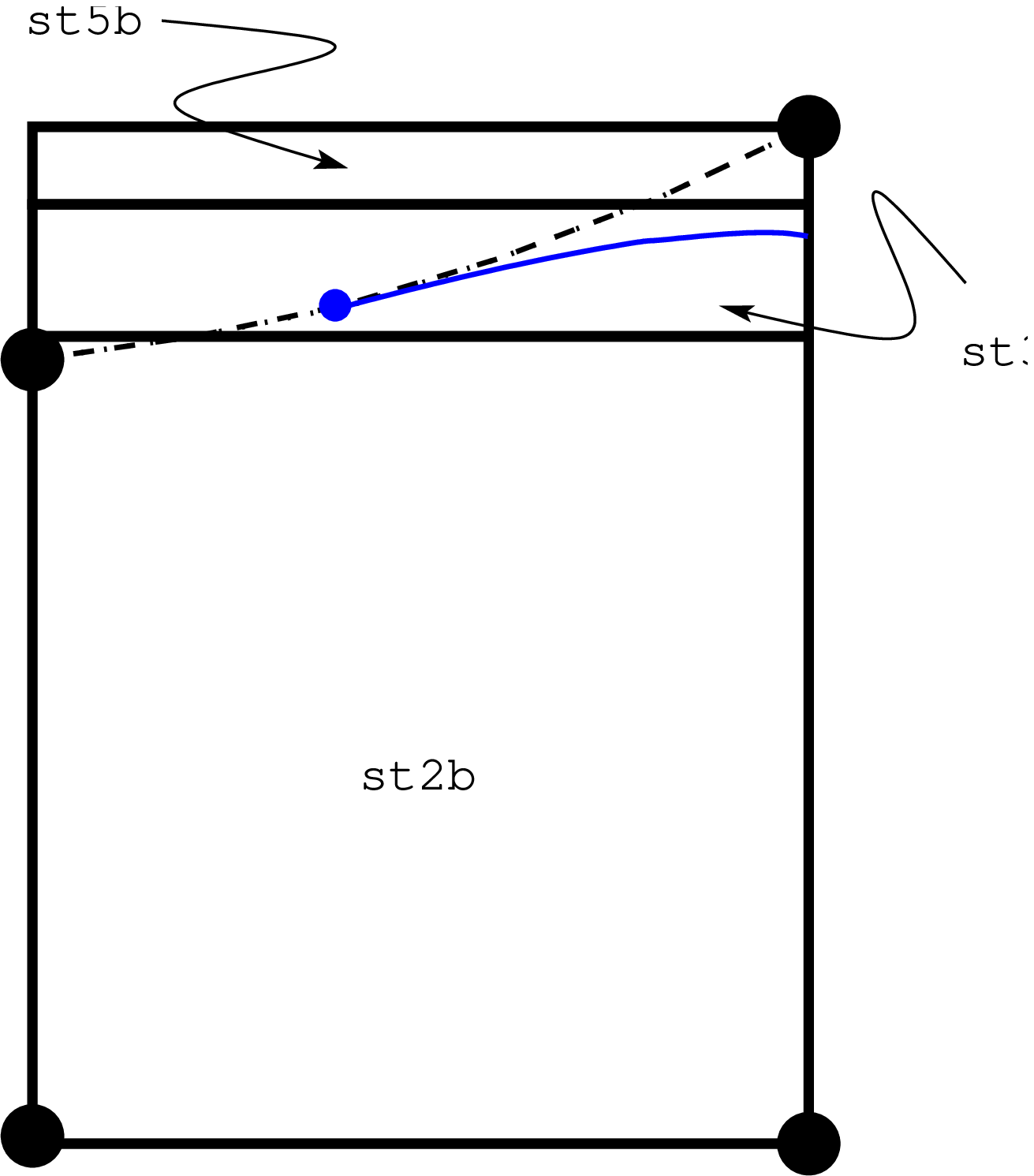}
\end{center}
\captionsetup{width=0.8\linewidth}
\caption{(a) The image $\tilde\calP$ (shaded) of the ordered mass space $\calM$ under the transformation (\ref{eq:s_and_t}). Compare to Figure~\ref{fig:mass_space}(b). The unshaded part of the rectangle $\calP$ corresponds to unordered masses. (b) The partition $\calP = \calP_1\cup\calP_2\cup\calP_3$ we will be using in this parametrization.}
\label{fig:parameter_space_zoom1}
\end{figure}

Based on this, we will define $\calP = \{(s,t)\colon 0\le s\le \frac{1}{2};  0\le t \le \tfrac{2}{3}\}$, and use the partition $\calP = \calP_1\cup\calP_2\cup\calP_3$ as illustrated in Figure~\ref{fig:parameter_space_zoom1}. More precisely we use
\begin{equation*}
\calP_1 = [0,0.5]\times [0,0.55],\quad \calP_2 = [0,0.5]\times [0.58, 0.67],\quad \calP_3 = [0,0.5]\times [0.55,0.58].
\end{equation*}
Note that each of the three partition elements contain points from $\calP\setminus\tilde\calP$. Such points correspond to unordered masses, and we will automatically remove most them from our computations. In the following, when we say that $\calP_i$ has some property, we mean that the ordered parameters $\tilde{\calP_i} = \calP_i\cap\tilde{\calP}$ have that property.

The three partition elements $\calP_1, \calP_2, \calP_3$ have the following properties:
\begin{itemize}
\item For each $(s,t)\in\calP_1$, there are exactly 8 solutions in $\calC$.
\item For each $(s,t)\in\calP_2$, there are exactly 10 solutions in $\calC$.
\item For each $(s,t)\in\calP_3$, there are between 8 and 10 solutions in $\calC$.
\end{itemize}

Let us describe these three regions in more detail. In subsequent sections, we will prove that these descriptions are accurate.

For $(s,t)\in\calP_2$ no bifurcations take place in $\calC$; exactly ten solutions exist but never come close to each other or a primary. This is the easiest region to account for. Also for $(s,t)\in\calP_1$ there are no bifurcations in $\calC$; exactly eight solutions exist. This region, however, includes parameters corresponding to arbitrarily small masses which presents other complications that must be resolved; more about this in section~\ref{subsubsec:small_masses}. The remaining set $\calP_3$ contains all parameters for which a bifucation occurs. As discussed in section~\ref{subsec:the_mass_space}, there are two bifurcation types that we must account for; quadratic and cubic. These bifurcations only take place in a small subset $\calC_2$ of the full configuration space. For $(s,t)\in\calP_3$, there can be 1, 2 or 3 solutions in $\calC_2$. In the remaining space $\calC_1 = \calC\setminus\calC_2$ there are exactly seven solutions, all isolated from each other and the primaries. Summing up, when $(s,t)\in\calP_3$ we have 8, 9 or 10 solutions in $\calC$ .

In the ideal setting, $\calP_3$ would correspond to the transformed (blue) bifurcation curve illustrated in Figure~\ref{fig:parameter_space_zoom1}. It would bisect $\tilde\calP$, acting as a common boundary line separating $\tilde\calP_1$ from $\tilde\calP_2$. Our approach, however, will build upon finite resolution computations, and therefore $\calP_2$ will be constructed as a rectangular subset of $\calP$, covering the entire bifurcation curve, see Figure~\ref{fig:parameter_space_zoom1}.

\section{Polar coordinates}\label{sec:polar_coordinates}

Given the shape of the configuration space (\ref{eq:configuration_space}), and how the solutions behave when masses become small, it makes sense to work in polar coordinates centered at the heaviest primary $p_3 = (0,0)$. In these coordinates the lighter primaries become $p_1=(1,\pi/6)$ and $p_2=(1,-\pi/6)$.

For convenience, let us define
\begin{equation*}
\alpha_1 = \pi/6, \qquad \alpha_2=-\pi/6.
\end{equation*}

Let $(z_1|z_2)$ denote the scalar product on $\mathbb{R}^2$. Given $z=(r\cos \phi,r \sin \phi)$, we have
\begin{eqnarray*}
  \|z- c\|^2 &=& (z-m_1 p_1 - m_2 p_2 | z-m_1 p_1 - m_2 p_2)=\\
  & &z^2 - 2m_1(p_1|z) - 2m_2(p_2|z) + 2m_1m_2(p_1|p_2) + p_1^2 + p_2^2, \\
  (p_i|z)&=& r \cos(\varphi-\alpha_i), \quad  i=1,2.
\end{eqnarray*}
It follows that
\begin{equation*}
   \|z- c\|^2= r^2 - 2 m_1 r \cos(\varphi-\pi/6) - 2m_2 r \cos(\varphi+ \pi/6) + g(m_1,m_2)
\end{equation*}
where $g(m_1,m_2)$ depends only on the masses and not on $z$. Therefore we can ignore $g$ when studying spatial derivatives of the potential $V$.

Note that, for $i=1,2$, we have
\begin{eqnarray*}
  r_i^2= (z-p_i)^2=z^2 - 2(p_i|z) + p_i^2=  r^2 - 2r \cos(\varphi-\alpha_i) + 1.
\end{eqnarray*}
Therefore, if we define
\begin{equation}
d(r,\alpha)= \left( r^2 - 2r \cos \alpha + 1 \right)^{1/2},
\end{equation}
then for $i=1,2$ we have
\begin{equation*}
 r_i(r,\varphi)=d(r,\varphi-\alpha_i).
\end{equation*}

Concluding, in polar coordinates, we obtain the new expression for the amended potential (compare to (\ref{eq:potential_function})):
\begin{equation}\label{eq:V-dec-W}
  V(r,\varphi;m)= V_0(r) +   m_1 W(r, \varphi-\alpha_1) + m_2 W(r, \varphi-\alpha_2) + g(m_1, m_2)
\end{equation}
where
\begin{equation*}
V_0(r) = \frac{r^2}{2} + \frac{m_3}{r}
\quad \mathrm{ and }\quad
W(r,\alpha) = \frac{1}{d(r,\alpha)} - r \cos(\alpha).
\end{equation*}

Taking partial derivatives, the gradient of the potential (\ref{eq:V-dec-W}) is given by
\begin{eqnarray}
\frac{\partial V}{\partial r}(r,\varphi;m)&=& r - \frac{m_3}{r^2} + m_1 \frac{\partial W}{\partial r}(r,\varphi-\alpha_1) + m_2 \frac{\partial W}{\partial r}(r,\varphi-\alpha_2),  \label{eq:dVdr} \\
\frac{\partial V}{\partial \varphi}(r,\varphi;m)&=&  m_1 \frac{\partial W}{\partial \alpha}(r,\varphi-\alpha_1) + m_2 \frac{\partial W}{\partial \alpha }(r,\varphi-\alpha_2),  \label{eq:dVdphi}
\end{eqnarray}
where
\begin{eqnarray*}
\frac{\partial W}{\partial \alpha}(r,\alpha)&=& r \sin \alpha \left(1 - \frac{1}{d(r,\alpha)^3} \right), \label{eq:dWda} \\
\frac{\partial W}{\partial r}(r,\alpha)&=& -\frac{r - \cos \alpha}{d(r,\alpha)^3} - \cos \alpha. \label{eq:dWdr}
\end{eqnarray*}
Note that (\ref{eq:dVdphi}) has a factor $r$ in both terms. We rescale this equation by a factor $1/r$ and $1/(m_1 + m_2)$. In the $(s,t)$ parameters this gives us
\begin{eqnarray}
F_1(r,\varphi;s,t)&=& r - \frac{1 - t}{r^2} + st \frac{\partial W}{\partial r}(r,\varphi-\alpha_1) + (1 - s)t \frac{\partial W}{\partial r}(r,\varphi-\alpha_2),  \label{eq:F1_of_st} \\
F_2(r,\varphi;s,t)&=&  s \frac{\partial W^\star}{\partial \alpha}(r,\varphi-\alpha_1) + (1-s) \frac{\partial W^\star}{\partial \alpha }(r,\varphi-\alpha_2),  \label{eq:F2_of_st}
\end{eqnarray}
where
\begin{equation*}
\frac{\partial W^\star}{\partial \alpha}(r,\alpha) = \frac{1}{r}\frac{\partial W}{\partial \alpha}(r,\alpha) = \sin \alpha \left(1 - \frac{1}{d(r,\alpha)^3} \right), \label{eq:dWstarda}
\end{equation*}
This $F(r,\varphi; s,t)$ will be used in place of $\nabla_z V(z;m)$ appearing in the previous sections. Zeros of $F$ correspond to critical points of $V$, and these correspond to relative equilibria.

\section{General strategy and key results}\label{sec:general_strategy_and_key_results}

Recall that we want to determine the number of solutions to $\nabla_z V(z;m) = 0$, and to understand how these behave within $\calC$ when the masses vary within $\calM$ (see Main Problem~\ref{pro:original}). For this to succeed, we must have a means of locating all solutions to the critical equation, and a way to analyze the various bifurcations that are possible as the masses vary.

As mentioned in section~\ref{sec:introduction} our overall goal is to construct a completely analytic proof of following theorem:
\begin{theorem}\label{thm:main_result}
For each $m\in\calM$ there are exactly 8,9 or 10 relative equilibria (i.e., solutions to the critical equation $\nabla_z V(z;m) = 0$) in $\calC$.
\end{theorem}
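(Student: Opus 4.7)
The plan is a rigorous computer-assisted continuation argument applied to the desingularized equation $F(r,\varphi;s,t)=0$ from (\ref{eq:F1_of_st})--(\ref{eq:F2_of_st}). By Lemmas~\ref{lemma:near_distance_test_2} and~\ref{lemma:near_distance_test_4}, no zero of $F$ escapes $\calC$ or approaches the primaries for any $m\in\calM$, so the total count is piecewise constant in parameters outside the bifurcation set. Accordingly, I would work on each of the three pieces $\calP_1, \calP_2, \calP_3$ of the partition of $\calP$ separately and combine the results.

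First, on $\calC\times\calP_2$ (the region that avoids all bifurcations) I would cover the product by small rectangular boxes $B = R\times S$ with $R\subset\calC$ and $S\subset\calP_2$, and on each $B$ perform one of two interval-arithmetic tests: either verify $0\notin F(B)$, excluding zeros from $B$, or verify that an interval Krawczyk (or interval Newton) operator strictly contracts $R$, which certifies a unique smooth branch of zeros in $R$ over every $(s,t)\in S$. A uniform non-singularity bound $\det D_{(r,\varphi)}F \ne 0$ on the successful boxes rules out births and deaths of branches inside $\calP_2$; comparing with the symmetric configuration $m = (\tfrac{1}{3},\tfrac{1}{3},\tfrac{1}{3})\in\calP_2$, known to possess exactly ten equilibria, pins the count at $10$. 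The same machinery handles $\calC\times\calP_1$, but here the essential point is that $F$ (in contrast to $\nabla_z V$) remains regular down to the boundary $t=0$ thanks to the $(s,t)$-desingularization of \S\ref{sec:reparametrizing_the_masses}, so the interval tests succeed uniformly even in the multi-restricted limits of types $2+2$ and $1+3$; matching against one verified configuration in $\calP_1$ then fixes the count at $8$.

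The region $\calP_3$ is the delicate one, since the bifurcation curve of Figure~\ref{fig:mass_space} lies inside it. Following the splitting $\calC = \calC_1 \cup \calC_2$, I would first establish by the same interval-continuation technique that exactly seven smooth, mutually isolated branches of zeros persist across all of $\calC_1\times\calP_3$. All remaining zeros lie in $\calC_2\times\calP_3$, a small neighbourhood of the bifurcation locus, where I would invoke the bifurcation analysis prepared in \S\ref{subsec:bifurcation_analysis}: at a generic quadratic point, a saddle-node normal form (certified by signed interval bounds on the relevant second derivatives of $V$) shows that the number of local zeros is $2$, $1$ or $0$; at the symmetric cubic point $m^\star$, a cusp unfolding (certified by signed interval bounds on third derivatives) allows $3$, $2$ or $1$ local zeros. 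In every case the $\calC_2$-count lies in $\{1,2,3\}$, so the total count in $\calC$ for $(s,t)\in\calP_3$ is $7+k$ with $k\in\{1,2,3\}$, i.e.\ $8$, $9$ or $10$.

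The hard part will be reconciling the enclosures in $\calP_3$. The rectangular strip $\calP_3$ must provably contain the \emph{entire} bifurcation curve of $\tilde\calP$, and the small configuration region $\calC_2$ must contain every zero of $F$ that can appear or disappear for $(s,t)\in\calP_3$, yet still be small enough that the cusp normal form is valid throughout. Both enclosures reduce to interval evaluations of explicit algebraic expressions in $V$ and its first three derivatives, together with a verification that $\det D_{(r,\varphi)}F$ does not vanish on any branch produced over $\calP_1$ or $\calP_2$. These checks are routine in principle, but obtaining tight enough enclosures near the cubic point $m^\star$, where second-order non-degeneracy fails by symmetry, is the main practical obstacle.
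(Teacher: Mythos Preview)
Your outline tracks the paper's architecture almost verbatim: the same $\calP_1/\calP_2/\calP_3$ partition, interval exclusion plus Krawczyk on the bifurcation-free pieces, and a Lyapunov--Schmidt reduction in $\calC_2$ over $\calP_3$. There is, however, one genuine gap and one looser step.

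The genuine gap is your treatment of $\calP_1$. You claim the $(s,t)$-reparametrisation of \S\ref{sec:reparametrizing_the_masses} makes $F$ ``regular down to the boundary $t=0$'' so that ``the interval tests succeed uniformly even in the multi-restricted limits.'' The reparametrisation does smooth the \emph{parameter} dependence, but it does nothing about the \emph{configuration-space} singularities of $F$ at the light primaries $p_1,p_2\in\calC$. As $m_i\to 0$ several zeros of $F$ converge to $p_i$, where $F$ and $D_{(r,\varphi)}F$ blow up, while Lemma~\ref{lemma:near_distance_test_1} excludes only a disk of radius $\sim\tfrac12\sqrt{m_i}$ around $p_i$, which collapses to a point. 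No finite interval bisection of $\calC\times\calP_1$ can resolve this regime. The paper closes the gap with a separate ingredient, Theorem~\ref{thm:small-masses-no-bif} (proved in the appendix via local polar coordinates centred at each light primary): for $m_i\le 10^{-2}$ every equilibrium within distance $10^{-3}$ of $p_i$ is non-degenerate, and strategy~2 then combines this with Theorem~\ref{thm:interval-crit} away from the primaries. Without an analogue of this step your $\calP_1$ argument is incomplete; you have in fact misidentified the ``hard part'', which is the small-mass analysis rather than the $\calP_3$ bifurcation work.

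The looser step is the \emph{lower} bound on the $\calC_2$-count over $\calP_3$. Your saddle-node description gives $0$, $1$ or $2$ local zeros, yet you conclude the count lies in $\{1,2,3\}$ without saying where the extra zero comes from. In the paper the quadratic case always comes packaged with a second, bifurcation-free component in $\calC_2$ carrying exactly one zero (pinned down by continuity from the neighbouring non-bifurcating parameters), yielding $1+\{0,1,2\}$. For the cubic case Lemma~\ref{lem:gr} with $k=3$ supplies only the upper bound $3$; the lower bound $1$ is obtained not from a certified cusp normal form but from a separate Poincar\'e--Miranda-type topological check (Theorem~\ref{thm:topological_zero}) on the rectangular hull of the solution cover.
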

This result was originally established by Barros and Leandro \cite{BarrosLeandro14} using algebraic techniques. Developing analytic tools for the proof, we hope to apply these to harder instances of the $n$-body problem that are not within reach using algebraic methods.

In our new setting, using the $(s,t)$-coordinates together with polar coordinates (described in Sections~\ref{sec:reparametrizing_the_masses} and \ref{sec:polar_coordinates}), the critical equation is transformed into its equivalent form $F(r,\varphi; s,t) = 0$. As explained earlier, this form is better suited for our approach, where set-valued numerical computations will play a major role.

Before going into the details of the computations used as part of our computer-assisted framework, we present the following key results. We will use three different techniques: finding the exact number of solutions for given parameters $(s,t)$; proving that no bifurcations take place for a range of parameters; and controlling the number of solutions when when a bifurcation takes place.

We begin by determining the number of solutions for two different parameters.
\begin{theorem}
\label{thm:point_masses}
Consider the critical equation $F(r,\varphi;s,t) = 0$ for the $3+1$ body problem.
\begin{itemize}
\item[(a)] For parameters $(s,t) = (1/4, 1/4)$, there are exactly 8 solutions in $\calC$.
\item[(b)] For parameters $(s,t) = (9/20, 3/5)$, there are exactly 10 solutions in $\calC$.
\end{itemize}
\end{theorem}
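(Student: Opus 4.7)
The plan is a rigorous enumeration based on interval arithmetic. For each fixed pair $(s_0,t_0)$ the theorem reduces to counting zeros of the planar, smooth-on-$\calC$ map $F(\,\cdot\,;s_0,t_0):\calC\to\mathbb{R}^2$ defined by \eqref{eq:F1_of_st}--\eqref{eq:F2_of_st}. Since we must produce an exact count, both existence and completeness have to be certified: existence via a verification test that a given region contains a (unique) zero, and completeness via interval-arithmetic exclusion elsewhere.

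First, before any gridding, I would use Lemma~\ref{lemma:near_distance_test_1} to carve out small closed discs around $p_1$ and $p_2$ that provably contain no critical points. For case (a) with $m_1 = st = 1/16$ and $m_2 = (1-s)t = 3/16$, a direct computation shows $r_1 \le 1/8$ and $r_2 \le \tfrac{1}{2}\sqrt{3/16}$ are excluded; for case (b) the masses are larger and the discs are wider. This step both shrinks the search region and, more importantly, keeps subsequent interval evaluations well away from the singularities of $F$.

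Next, cover the remaining part of $\calC$ (in polar coordinates $(r,\varphi)\in[1/3,2]\times[-\pi,\pi]$) by a recursively refined binary partition $\{B_j\}$. For each box, compute an interval enclosure of $F(B_j)$ using the explicit formulas for $\partial W/\partial r$ and $\partial W^\star/\partial\alpha$. If $0\notin F(B_j)$, discard $B_j$. Otherwise apply an interval Newton (or Krawczyk) operator using an interval enclosure of the Jacobian $DF$ over $B_j$: a successful test certifies the existence of a unique zero inside $B_j$, while an inconclusive outcome triggers bisection of $B_j$. Summing the number of verified uniqueness boxes yields the total count, which should be $8$ for case (a) and $10$ for case (b).

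The main obstacle is termination of the interval Newton phase. The test can fail repeatedly at any resolution only if two zeros are coalescing (a near-bifurcation) or $DF$ is nearly degenerate at some zero; in either case no amount of subdivision rescues the procedure. The two parameters are, however, deliberately chosen in the bifurcation-free regions: $(1/4,1/4)\in\calP_1$ and $(9/20,3/5)\in\calP_2$, while the bifurcation locus lies in $\calP_3$. I therefore expect $DF$ to be uniformly non-singular at each zero, so that interval Newton succeeds at moderate depth. A secondary technical point is the propagation of interval overestimation in $d(r,\alpha)^{-3}$ for boxes whose $r_i$ is close to the radii of the small-mass exclusion discs of step one; this is handled either by slightly enlarging the exclusion discs, or by a mean-value-form evaluation of $F$ that contracts wrap-around growth in the enclosure.
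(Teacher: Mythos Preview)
Your proposal is correct and follows essentially the same approach as the paper: adaptive bisection of $\calC$ in polar coordinates, interval-arithmetic exclusion via $0\notin F(\ivX)$, the Krawczyk operator for certifying unique zeros, and the near-distance tests of Lemma~\ref{lemma:near_distance_test_1} to handle boxes close to the lighter primaries. The paper implements exactly this as its ``strategy~1'' and reports the resulting box counts; your discussion of termination (non-degeneracy of the zeros because the chosen parameters lie in the bifurcation-free regions $\calP_1$ and $\calP_2$) matches the paper's rationale.
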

Note that $(s,t) = (1/4, 1/4)\in\calP_1$ and $(s,t) = (9/20, 3/5)\in\calP_2$, as discussed in Section~\ref{sec:reparametrizing_the_masses}.

Combining the results of Theorem~\ref{thm:point_masses} with a criterion that ensures that no bifurcations are taking place (see Section~\ref{subsec:local_uniqueness}), we can extend the two solution counts to the two connected regions $\calP_1$ and $\calP_2$, respectively:

\begin{theorem}
\label{thm:mass_regions_one_and_two}
Consider the critical equation $F(r,\varphi;s,t) = 0$ for the $3+1$ body problem.
\begin{itemize}
\item[(a)] For all parameters $(s,t)\in\calP_1$, no solution in $\calC$ bifurcates.
\item[(b)] For all parameters $(s,t)\in\calP_2$, no solution in $\calC$ bifurcates.
\end{itemize}
\end{theorem}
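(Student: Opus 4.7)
The plan is to reduce the statement to uniform non-degeneracy of the partial Jacobian $J(r,\varphi;s,t) := D_{(r,\varphi)} F(r,\varphi;s,t)$ at every zero of $F$ inside $\calC\times\tilde\calP_i$. If $\det J \neq 0$ at every such zero, the implicit function theorem places each zero on a smooth, locally unique branch $(r(s,t),\varphi(s,t))$. Compactness of $\calC$ together with connectedness of $\tilde\calP_i$ then forces the total number of zeros to be locally, and hence globally, constant on $\tilde\calP_i$: no two branches can merge in a saddle-node, and no higher-order (cubic) contact is possible either. The exclusion results (Lemmas~\ref{lemma:near_distance_test_2} and~\ref{lemma:near_distance_test_4}, together with the mass-dependent exclusions around $p_1,p_2$ noted in the remark following Lemma~\ref{lemma:near_distance_test_1}) keep all solutions inside a region where $F$ is real-analytic, so that the implicit function theorem applies and branches cannot escape $\calC$ through its boundary $r = 1/3$ or $r = 2$.

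The verification of $\det J \neq 0$ at every zero is to be carried out by a rigorous interval-arithmetic scan. I would cover $\calC\times\calP_i$ by a collection of closed boxes $B = B_r\times B_\varphi\times B_s\times B_t$. On each box, compute interval enclosures $[F](B)$ from~(\ref{eq:F1_of_st})--(\ref{eq:F2_of_st}), and $[\det J](B)$ from the analytical $(r,\varphi)$-derivatives of those formulas. Discard $B$ if $0\notin [F](B)$; otherwise accept $B$ if $0\notin [\det J](B)$. Boxes that fail both tests are split and re-scanned adaptively. Termination of this recursion, together with the exclusion lemmas that prune $\calC$ to a tractable annulus, is precisely the content of the computer-assisted proof.

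The hard part will be engineering enclosures sharp enough in two regimes. The first, and genuinely delicate one, is the edge $t = 0$ of $\calP_1$, where both $m_1$ and $m_2$ vanish; a naive expression for $\det J$ in $(m_1,m_2)$-coordinates would degenerate there. This is exactly what the $(s,t)$-reparametrization of Section~\ref{sec:reparametrizing_the_masses} was designed to cure: dividing $\partial V/\partial\varphi$ by $r$ and by $m_1 + m_2 = t$ produces an $F_2$ which extends smoothly to $t = 0$ and, generically, keeps $\det J$ nonzero in that limit. Interval enclosures along $t = 0$ are therefore meaningful and can be tightened by subdivision. The second delicate regime lies in small neighbourhoods of the known relative equilibria located by Theorem~\ref{thm:point_masses}: here $[F](B)$ unavoidably straddles zero and the Jacobian test must do the real work, so the subdivision in $(r,\varphi)$ has to isolate each zero of $F$ while maintaining a uniform sign on $\det J$. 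Away from these two regimes the bulk of $\calC\times\calP_i$ is eliminated by the coarse test $0\notin [F](B)$. In $\calP_2$ all three masses stay bounded away from zero, so the verification is structurally identical but quantitatively more robust than on $\calP_1$.
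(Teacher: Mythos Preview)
Your overall architecture matches the paper's: cover $\calC\times\calP_i$ by boxes, discard those with $0\notin[F](B)$, and on the remainder verify $0\notin[\det D_{(r,\varphi)}F](B)$ via Theorem~\ref{thm:interval-crit}, invoking the exclusion lemmas to stay away from singularities. That is exactly ``strategy~2'' in Section~\ref{sec:computational_results}.

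There is, however, a real gap in your treatment of small masses. You assert that the $(s,t)$-reparametrization ``cures'' the degeneracy at $t=0$ and that the mass-dependent exclusion disks around $p_1,p_2$ keep solutions in a region where $F$ is analytic. Neither is sufficient. The rescaled $F_2$ indeed extends smoothly in the \emph{parameters} to $t=0$, but $F_1$ and $F_2$ remain singular in the \emph{configuration variables} at $p_1$ and $p_2$ (through the $d(r,\varphi-\alpha_i)^{-3}$ terms). As $m_1\to 0$ (i.e.\ along $s\to 0$ or $t\to 0$ in $\calP_1$), genuine relative equilibria converge to $p_1$; the exclusion radius $\tfrac12\sqrt{m_1}$ from the remark after Lemma~\ref{lemma:near_distance_test_1} shrinks to zero precisely in that limit, so it cannot keep the solutions away from the singularity. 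On any box near $p_1$ the interval enclosures of $F$ and of $\det J$ blow up, and the determinant test of Theorem~\ref{thm:interval-crit} becomes, in the paper's words, ``not practically applicable''.

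The paper closes this gap with a separate analytical argument, summarised as Theorem~\ref{thm:small-masses-no-bif} and proved in Appendix~\ref{app:small_masses}: one passes to local polar coordinates $(\varrho,\varphi)$ centred at the light primary, solves $\partial V/\partial\varphi=0$ for four explicit branches $\varphi(\varrho)$, and shows that along each branch the reduced radial equation has non-degenerate solutions for all $0<m_i\le 10^{-2}$ and $\varrho\le 10^{-3}$. Only outside these small disks is the interval determinant scan invoked. Your proposal needs an analogous desingularisation step; without it the recursion near $p_1$ (and near $p_2$ when both masses are small) will not terminate.
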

It follows that the number of solutions in $\calP_1$ and $\calP_2$ is constant (8 and 10, respectively).

For the remaining part $\calP_3$ of the parameter space, we must be a bit more detailed. As discussed in Section~\ref{sec:computational_results}, we will split the configuration space into two connected components: $\calC = \calC_1 \cup \calC_2$, thus isolating the region where all bifurcations occur.

\begin{theorem}
\label{thm:configuration_region_1_and_2}
Consider the critical equation $F(r,\varphi;s,t) = 0$ for the $3+1$ body problem.
\begin{itemize}
\item[(a)] For all parameters $(s,t)\in\calP_3$, there are exactly 7 solutions in $\calC_1$.
\item[(b)] For all parameters $(s,t)\in\calP_3$, there are 1, 2, or 3 solutions in $\calC_2$.
\end{itemize}
\end{theorem}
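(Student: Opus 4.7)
The plan is to prove both parts by a computer-assisted argument based on interval-arithmetic evaluation of $F$ and of its Jacobian on small rectangular boxes in joint configuration-parameter space. I would first subdivide $\calP_3 = [0,\tfrac12]\times[0.55,0.58]$ into a grid of parameter rectangles $\{R_k\}$ and treat each $R_k$ independently. For a fixed $R_k$, a uniform count of solutions of $F(r,\varphi;s,t)=0$ over $\calC_i$ (for $i=1,2$) reduces to adaptively classifying an exhaustive cover of $(r,\varphi)$-boxes $B\subset\calC_i$ into three categories: (i) solution-free, certified by $0\notin F(B;R_k)$; (ii) containing a unique smooth solution branch $\gamma(s,t)$ over the whole of $R_k$, certified by a successful interval Newton (or Krawczyk) test that also shows $D_{(r,\varphi)}F$ to be non-singular on $B\times R_k$; or (iii) inconclusive, in which case I would bisect $B$ (and if necessary $R_k$) and recurse. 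Termination of the recursion is guaranteed away from the bifurcation set because $F$ is real-analytic and its Jacobian is non-degenerate there.

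For part (a), the configuration piece $\calC_1$ is by construction disjoint from the bifurcation set, so $D_{(r,\varphi)}F$ is uniformly non-singular on $\calC_1\times\calP_3$. The recursion therefore terminates with every box labelled (i) or (ii), and I would verify that exactly seven boxes of type (ii) remain. An additional exclusion check $0\notin F(\partial\calC_1\cap\partial\calC_2;R_k)$ prevents any branch from crossing from $\calC_1$ into $\calC_2$ as $(s,t)$ moves inside $R_k$, so the count of seven is constant along $R_k$ and hence across all of $\calP_3$. The same non-singularity witnesses the absence of bifurcations in $\calC_1$, in direct analogy with Theorem~\ref{thm:mass_regions_one_and_two}.

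For part (b) on $\calC_2$, plain interval Newton will fail on boxes that straddle the bifurcation locus. On such boxes I would fall back on the bifurcation-resolution machinery of Section~\ref{subsec:bifurcation_analysis}: locally reduce $F=0$ to a one-variable equation $\tilde F(u;s,t)=0$ via the implicit function theorem in the direction in which $DF$ is still regular, and then count roots of $\tilde F$ through verified interval bounds on its leading Taylor coefficients. At a quadratic (saddle-node) bifurcation $\tilde F$ has at most a double root and contributes zero, one, or two solutions; at the cubic bifurcation near the symmetry point $(s^\star,t^\star)$, $\tilde F$ has at most a triple root and contributes at most three. Adding any simple zeros certified in the complementary regular portion of $\calC_2$, the total count in $\calC_2$ is at most three. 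The lower bound of one is obtained by exhibiting at least one box in $\calC_2$ on which interval Newton succeeds for every $R_k$, corresponding to a solution branch that persists through the bifurcation.

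The main obstacle is the cubic bifurcation near $(s^\star,t^\star)$: there the second-order test degenerates, so I need explicit and sufficiently tight interval enclosures of the third Taylor coefficient of $\tilde F$, combined with a uniform argument that the parameter box chosen for the cubic reduction is small enough that no extra root enters from outside, yet large enough to enclose all three merging branches simultaneously. Once this third-order machinery is in place, packaging the regular, quadratic, and cubic regimes into a single case analysis per $R_k$ is conceptually routine, though computationally intensive and demanding of careful bookkeeping at the boundary $\partial\calC_2$.
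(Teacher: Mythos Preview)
Your overall strategy matches the paper's, but there is one genuine gap: the lower bound of one solution in $\calC_2$ near the cubic bifurcation.

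You propose obtaining the lower bound by ``exhibiting at least one box in $\calC_2$ on which interval Newton succeeds for every $R_k$, corresponding to a solution branch that persists through the bifurcation.'' This works for parameter rectangles where only a quadratic (saddle--node) bifurcation occurs, because there the solution set in $\calC_2$ has two connected components, one of which is bifurcation-free and admits a uniform Krawczyk verification. But for parameter rectangles containing the cubic point $(s^\star,t^\star)$, the solution set in $\calC_2$ is a \emph{single} connected component in which all three roots of the reduced equation coalesce. As the parameter approaches the cubic value the Jacobian $D_{(r,\varphi)}F$ degenerates at the triple root, and there is no separate simple zero on which an interval Newton/Krawczyk test can succeed uniformly over such an $R_k$. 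Your third-order Taylor bound correctly gives the upper bound of three, but it gives no lower bound.

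The paper closes this gap with a purely topological existence argument (its Theorem~\ref{thm:topological_zero}): it takes the rectangular hull of the single solution component and verifies sign conditions on $F_1,F_2$ along the boundary that force their zero-level curves to cross inside, via a Poincar\'e--Miranda type result. This check does not require any non-degeneracy of $DF$ and supplies the missing lower bound of one.

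A minor secondary difference in part~(a): you plan to count the seven solutions in $\calC_1$ directly via Krawczyk on each $R_k$. The paper takes a slightly less direct route: it only verifies $0\notin\det D_{(r,\varphi)}F$ on $\calC_1\times\calP_3$ (so the count is constant over $\calP_3$), and then reads off the value $7$ by continuity from the adjacent regions $\calP_1,\calP_2$, where the total counts $8$ and $10$ have already been established. Both routes are valid; yours is more self-contained, the paper's recycles earlier work.
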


Theorem~\ref{thm:main_result} now follows from the combination of Theorem~\ref{thm:point_masses}, \ref{thm:mass_regions_one_and_two}, and \ref{thm:configuration_region_1_and_2}.
In what follows, we will justify each of the three steps described above.

\section{Computational techniques}\label{sec:computational_techniques}

Here we present the computational techniques that we need to employ in order to establish our main theorem. We also discuss the underlying set-valued methods used later on in the computer-assisted proofs.

\subsection{Set-valued mathematics}\label{subsec:set_valued_mathematics}

We begin by giving a very brief introduction to set-valued mathematics and rigorous numerics. For more in-depth accounts we refer to e.g. \cite{Neumaier1990book, Tucker2011book}.

We will exclusively work with compact boxes $\ivX$ in $\R^n$, represented as vectors whose components are compact intervals: $\ivX = (\ivX_1,\dots,\ivX_n)$, where $\ivX_i = \{x\in\R\colon \lo x_i \le x \le \hi x_i\}$ for $i = 1,\dots, n$.

Given en explicit formula for a function $f\colon\R^n\to \R^m$, we can form its \textit{interval extension} (which we also denote by $f$), by extending each real operation by its interval counterpart. As long as the resulting \textit{interval image} $f(\ivX)$ is well-defined, we always have the following \textit{inclusion property}:
\begin{equation}\label{eq:inclusion_property}
\mathrm{range}(f;\ivX) = \{f(x)\colon x\in \ivX\} \subseteq f(\ivX).
\end{equation}
The main benefit of moving from real-valued to interval-valued analysis is the ability to discretize continuous problems while retaining full control of the discretisation errors. Indeed, whilst the exact range of a function $\mathrm{range}(f;\ivX)$ is hard to compute, its interval image $f(\ivX)$ can be found by a finite computation. In practice, the interval image is computed via a finite sequence of numerical operations. Carefully crafted libraries using directed rounding ensures that the numerical output respects the mathematical incusion propery of (\ref{eq:inclusion_property}).

\subsection{Equation solving and safe exclusions}\label{subsec:equation_solving_and_safe_exclusions}

We begin by stating what is known as the \textit{exclusion principle}:
\begin{theorem}\label{thm:exclusion_principle}
If f(\ivX) is well-defined and if $0\notin f(\ivX)$, then $f$ has no zero in $\ivX$.
\end{theorem}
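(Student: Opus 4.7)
My plan is to deduce this immediately from the inclusion property (\ref{eq:inclusion_property}) stated just above. The interval extension $f(\ivX)$ is, by construction, a superset of the true range $\{f(x)\colon x\in\ivX\}$, so excluding $0$ from the (easily computed) superset is enough to exclude $0$ from the (hard to compute) range. The proof is therefore a one-line contrapositive.

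Concretely, I would argue as follows. Assume $f(\ivX)$ is well-defined and $0\notin f(\ivX)$. Suppose for contradiction that there exists $x^\star\in\ivX$ with $f(x^\star)=0$. Then $0=f(x^\star)\in\mathrm{range}(f;\ivX)$, and by (\ref{eq:inclusion_property}) we have $\mathrm{range}(f;\ivX)\subseteq f(\ivX)$, whence $0\in f(\ivX)$. This contradicts the hypothesis, so no such $x^\star$ can exist and $f$ has no zero in $\ivX$.

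There is no real obstacle here: the entire content of the theorem is already packaged in the inclusion property, which in turn is guaranteed by the standard construction of interval arithmetic together with outward-rounded floating point (as referenced in the works cited in Section~\ref{subsec:set_valued_mathematics}). The practical subtlety, which belongs to the implementation rather than to the proof, is only that one must verify $f(\ivX)$ is well-defined, i.e., that no intermediate operation encounters an undefined interval (for instance, division by an interval containing zero, or evaluation of $1/r_i$ on a box that straddles a primary). In our setting this is automatic because the exclusion tests of Lemmas~\ref{lemma:near_distance_test_2} and~\ref{lemma:near_distance_test_4} keep every candidate box $\ivX$ uniformly bounded away from the singularities of $F$.
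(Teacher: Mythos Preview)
Your proof is correct and matches the paper's own justification exactly: the paper simply states that the theorem is an immediate consequence of the inclusion property (\ref{eq:inclusion_property}), which is precisely the one-line contrapositive you wrote out. Your additional remarks on well-definedness are accurate but go beyond what the paper records for this statement.
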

The proof is an immediate consequence of (\ref{eq:inclusion_property}). The exclusion principle can be used in an adaptive bisection scheme, gradually discarding subsets of a global search space $\ivXX$. At each stage of the bisection process, a (possibly empty) collection of subsets $\ivX_1,\dots, \ivX_n$ remain, whose union must contain all zeros of $f$. Note, however, that this does not imply that $f(x) = 0$ has any solutions; we have only discarded subsets of $\ivXX$ where we are certain that no zeros of $f$ reside. In order to prove the existence of zeros, we need an addition result.

Let $f\in C^1(\ivXX, \R^n)$, where $\mathrm{Dom}(f) = \ivXX \subseteq \R^n$. Given an interval vector $\ivX\subset\ivXX$, a point $\check x\in\ivX$, and an invertible $n\times n$ matrix $C$, we define the \textit{Krawczyk operator}  \cite{Krawczyk,Neumaier1990book}   as
\begin{equation}
K_f(\ivX; \check x; C) = f(\check x) - C\cdot f(\check x) + \big(I - C\cdot Df(\ivX)\big)\cdot (\ivX - \check x).
\end{equation}
Popular choices are $\check x = \mathrm{mid}(\ivX)$ and $C = \mathrm{mid}([Df(\check x)])^{-1}$, resulting in Newton-like convergence rates near simple zeros.

\begin{theorem}\label{thm:krawczyk}
Assume that $K_f(\ivX; \check x; C)$ is well-defined. Then the following statements hold:
\begin{enumerate}
\item If $K_f(\ivX; \check x; C) \cap \ivX = \emptyset$, then $f$ has no zeros in $\ivX$.
\item If $K_f(\ivX; \check x; C) \subset \mathrm{int}\,\ivX$, then $f$ has a unique zero in $\ivX$.
\end{enumerate}
\end{theorem}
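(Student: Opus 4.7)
The plan is to reduce both conclusions to analyzing the Newton-like map $N(x) = x - C\cdot f(x)$, whose fixed points coincide with the zeros of $f$ (since $C$ is invertible), and to show that the Krawczyk operator is an interval enclosure of $N(\ivX)$. I read the definition as $K_f(\ivX;\check x;C) = \check x - C\cdot f(\check x) + (I - C\cdot Df(\ivX))(\ivX - \check x)$, taking the leading $f(\check x)$ in the displayed formula to be a typo for $\check x$.

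First I would invoke the integral mean value theorem on the convex box $\ivX$: for every $x \in \ivX$, the matrix $M(x) = \int_0^1 Df(\check x + s(x - \check x))\,ds$ has entries in the interval matrix $Df(\ivX)$ and satisfies $f(x) - f(\check x) = M(x)(x - \check x)$. Substituting,
\begin{equation*}
N(x) = \check x - C\cdot f(\check x) + (I - C\cdot M(x))(x - \check x),
\end{equation*}
and the inclusion property of interval arithmetic gives $N(\ivX) \subseteq K_f(\ivX;\check x;C)$. This single enclosure is the workhorse for both parts.

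For part 1, if some $x^{\ast} \in \ivX$ satisfied $f(x^{\ast}) = 0$, then $x^{\ast} = N(x^{\ast}) \in N(\ivX) \subseteq K_f$, contradicting $K_f \cap \ivX = \emptyset$. For existence in part 2, the hypothesis $K_f \subset \mathrm{int}\,\ivX$ together with the enclosure $N(\ivX) \subseteq K_f$ shows that the continuous map $N$ sends the compact convex set $\ivX$ into itself, so Brouwer's fixed point theorem yields a zero of $f$ in $\ivX$. For uniqueness, if $x_1, x_2 \in \ivX$ both satisfy $f(x_i) = 0$, the mean value identity applied between $x_1$ and $x_2$ produces a matrix $\widetilde M$ with entries in $Df(\ivX)$ such that $0 = C\bigl(f(x_1) - f(x_2)\bigr) = C\widetilde M(x_1 - x_2)$, hence $(I - C\widetilde M)(x_1 - x_2) = x_1 - x_2$, and the conclusion reduces to showing that $I - C\widetilde M$ is a strict contraction.

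The main obstacle is extracting this contraction bound from the purely geometric hypothesis $K_f \subset \mathrm{int}\,\ivX$. My approach would be to translate by $\check x - Cf(\check x)$ and compare coordinate-wise widths: the strict inclusion forces $(I - C\cdot Df(\ivX))(\ivX - \check x) \subsetneq \ivX - \check x$ componentwise, and since $\ivX - \check x$ is a symmetric box with positive widths, this yields that the nonnegative matrix $|I - C\cdot Df(\ivX)|$ applied to the width vector of $\ivX$ returns a strictly smaller vector. Standard Perron--Frobenius reasoning then gives $\rho\bigl(|I - C\cdot Df(\ivX)|\bigr) < 1$, and since the entries of $I - C\widetilde M$ are dominated entrywise by those of $|I - C\cdot Df(\ivX)|$, the matrix $I - C\widetilde M$ is also a strict contraction, forcing $x_1 = x_2$.
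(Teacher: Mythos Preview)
The paper does not supply a proof of this theorem; it is quoted as a standard result with references to Krawczyk and Neumaier. So there is no in-paper argument to compare against, and your write-up is essentially the textbook proof. Your reading of the displayed formula is correct: the leading $f(\check x)$ is a typo for $\check x$, and with that correction your enclosure $N(\ivX)\subseteq K_f$ via the integral mean value theorem is exactly right. Part~1 and the existence half of Part~2 (Brouwer on the self-map $N$) are clean and complete.

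There is one genuine soft spot in your uniqueness argument. You write that the strict inclusion forces $(I-C\,Df(\ivX))(\ivX-\check x)\subsetneq \ivX-\check x$, and justify the Perron--Frobenius step by saying ``since $\ivX-\check x$ is a symmetric box''. But $\ivX-\check x$ is symmetric only when $\check x=\mathrm{mid}(\ivX)$, whereas the theorem is stated for an arbitrary $\check x\in\ivX$; moreover, after translating $K_f$ you land in $\mathrm{int}(\ivX-\check x)+Cf(\check x)$, not in $\ivX-\check x$ itself, so the set inclusion you assert need not hold. The fix is to argue with \emph{widths} rather than with set inclusion: from $K_f\subset\mathrm{int}\,\ivX$ one gets $\mathrm{width}\bigl((I-C\,Df(\ivX))(\ivX-\check x)\bigr)<\mathrm{width}(\ivX)$ componentwise, and since $0\in\ivX-\check x$ one has, for each entry, $\mathrm{width}(\ivA_{ij}\ivB_j)\ge |\ivA_{ij}|\,\mathrm{width}(\ivB_j)$. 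Summing over $j$ yields $|I-C\,Df(\ivX)|\,w<w$ for the positive width vector $w$ of $\ivX$, hence $\rho\bigl(|I-C\,Df(\ivX)|\bigr)<1$, and your contraction conclusion for $I-C\widetilde M$ follows as you wrote. With this adjustment the proof is complete for general $\check x\in\ivX$.
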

We will use this theorem together with the interval bisection scheme, were we adaptively bisect the initial search region $\ivXX$ into subsets $\ivX$ that are either discarded due to the fact that $0\notin f(\ivX)$, kept intact because of $K_f(\ivX; \check x; C) \subset \mathrm{int}\,\ivX$, or bisected for further study. On termination, this will give us an exact count on the number of zeros of $f$ within $\ivXX$.

Theorem~\ref{thm:krawczyk} can be extended to the setting where $f$ also depends on some $m$-dimensional parameter: $f\colon \R^n\times\R^m \to \R^n$ with $(x;p)\mapsto f(x;p)$. This is what we use to establish Theorem~\ref{thm:point_masses}.

\subsection{A set-valued criterion for local uniqueness}
\label{subsec:local_uniqueness}

Continuing in the set-valued, parameter dependent setting, we will explain in detail the criteria used for detecting when (and when not) a bifurcation occurs for a general system of non-linear equations, depending on some parameters. We will also derive results aimed at extracting more detailed information about some specific bifurcations that may occur.

Let us begin by considering the general problem of solving a system of (non-linear) equations
\begin{equation}\label{eq:equations-det}
f(x;p) = 0\qquad x\in\ivX,\quad p\in\ivP,
\end{equation}
where $\ivX\subset\R^n$ and $\ivP\subset\R^m$ are high-dimensional boxes. For a sufficiently smooth $f\colon\R^n\times\R^m\to\R^n$, we want to know how many solutions (\ref{eq:equations-det}) can have. We will focus on the bifurcations and develop a criterion which will tell us when the number of solutions of (\ref{eq:equations-det}) changes.

For now, we will suppress the parameter dependence of $f$ for clarity. All results that follow are extendable to the parameter-dependent setting.

An obvious condition for the local uniqueness of solutions to (\ref{eq:equations-det}) is given by the following theorem.
\begin{theorem}
\label{thm:interval-crit}
Let $f:\R^n\to \R^n$ be $C^1$. Assume that we are given a box $\ivX \subset \R^n$ such that $0 \notin \det Df(\ivX)$. Then $f$ is a bijection from $\ivX$ onto its image.
\end{theorem}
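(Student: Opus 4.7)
The plan is a fairly standard interval-analytic injectivity argument, powered by the mean value theorem. Since $\ivX$ is a box, it is convex, so for any two points $x,y\in\ivX$ the segment joining them lies in $\ivX$, and the fundamental theorem of calculus gives
\begin{equation*}
f(y)-f(x) = M(x,y)\,(y-x),\qquad M(x,y) = \int_0^1 Df\bigl(x + t(y-x)\bigr)\,dt.
\end{equation*}
The crux is to argue that $M(x,y)$ is invertible for every choice of $x,y\in\ivX$, and then injectivity follows immediately: $f(x)=f(y)$ forces $M(x,y)(y-x)=0$, hence $y=x$.

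To show invertibility of $M(x,y)$, I would note that, entry by entry, $M(x,y)_{ij}$ is an average (an integral against a probability measure) of values $Df(z)_{ij}$ for points $z$ that all lie in $\ivX$. Consequently each entry $M(x,y)_{ij}$ belongs to the interval component $Df(\ivX)_{ij}$ produced by the interval extension of $Df$ over $\ivX$. In other words, the real matrix $M(x,y)$ lies inside the interval matrix $Df(\ivX)$. The determinant, viewed as a polynomial expression in the matrix entries, satisfies the inclusion property \eqref{eq:inclusion_property}, so $\det M(x,y)\in\det Df(\ivX)$. Since by hypothesis $0\notin\det Df(\ivX)$, we conclude $\det M(x,y)\neq 0$ and $M(x,y)$ is invertible.

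Putting these pieces together establishes injectivity on $\ivX$. Bijectivity onto the image $f(\ivX)$ is then automatic by definition of the codomain, which completes the proof.

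The main subtlety — and the only step that actually uses more than elementary calculus — is justifying that the \emph{integrated} Jacobian $M(x,y)$ lies in the interval matrix $Df(\ivX)$ componentwise. This uses convexity of the box together with the fact that the interval hull of $\{Df(z)_{ij}:z\in\ivX\}$ is, by definition of the interval image, contained in $Df(\ivX)_{ij}$. With that observation in hand, the rest of the argument is essentially the standard Hadamard-type global injectivity statement on a convex set with nonvanishing Jacobian determinant.
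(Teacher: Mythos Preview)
Your argument is correct and is essentially the paper's own proof: both use the convexity of $\ivX$ to write $f(x)-f(y)$ as an integrated Jacobian times $(x-y)$, observe that this averaged matrix lies (entrywise) in the interval matrix $Df(\ivX)$, and conclude nonsingularity from $0\notin\det Df(\ivX)$. The only cosmetic difference is that the paper phrases the last step as ``all point-valued matrices in $Df(\ivX)$ are non-singular, so $Df(\ivX)\cdot(x-y)$ cannot contain $0$,'' while you spell out the inclusion $\det M(x,y)\in\det Df(\ivX)$ explicitly.
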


Note that $Df(\ivX)$ is a matrix with interval entries; it contains all possible Jacobian matrices $Df(x)$, where $x\in\ivX$.

\begin{proof}
We have for $x,y \in \ivX$, $x\neq y$
\begin{equation}\label{eq:bijection}
  f(x) - f(y) = \int_0^1 Df(y + t(x-y))dt \cdot (x-y) \in Df(\ivX)\cdot (x-y).
\end{equation}
Now, since $0 \notin \det Df(\ivX)$, all (point-valued) matrices in $Df(\ivX)$ are non-singular. Therefore the right-hand side of (\ref{eq:bijection}) cannot contain the zero vector. Hence the left-hand side $f(x) - f(y)$ cannot vanish.
\end{proof}

This result constitutes the core of most of our computations. Given two boxes $\ivX$ and $\ivP$, we can easily compute $\ivY = \det Df(\ivX;\ivP)$ using interval arithmetic. If $0\notin\ivY$, we know that $f(\cdot;p)$ is a bijection (between $\ivX$ and its image) for each $p\in\ivP$. Therefore, the equation (\ref{eq:equations-det}) can have \textit{at most} one solution in $\ivX$, and such a solution cannot undergo any bifurcation when $p\in\ivP$.

In dimension two, it is easy to illustrate the condition we are checking: the level sets of $f_1$ and $f_2$ must never become parallel, see Figure~\ref{fig:LSX}.

\begin{figure}[ht]
\begin{center}
\includegraphics[width=0.25\linewidth]{./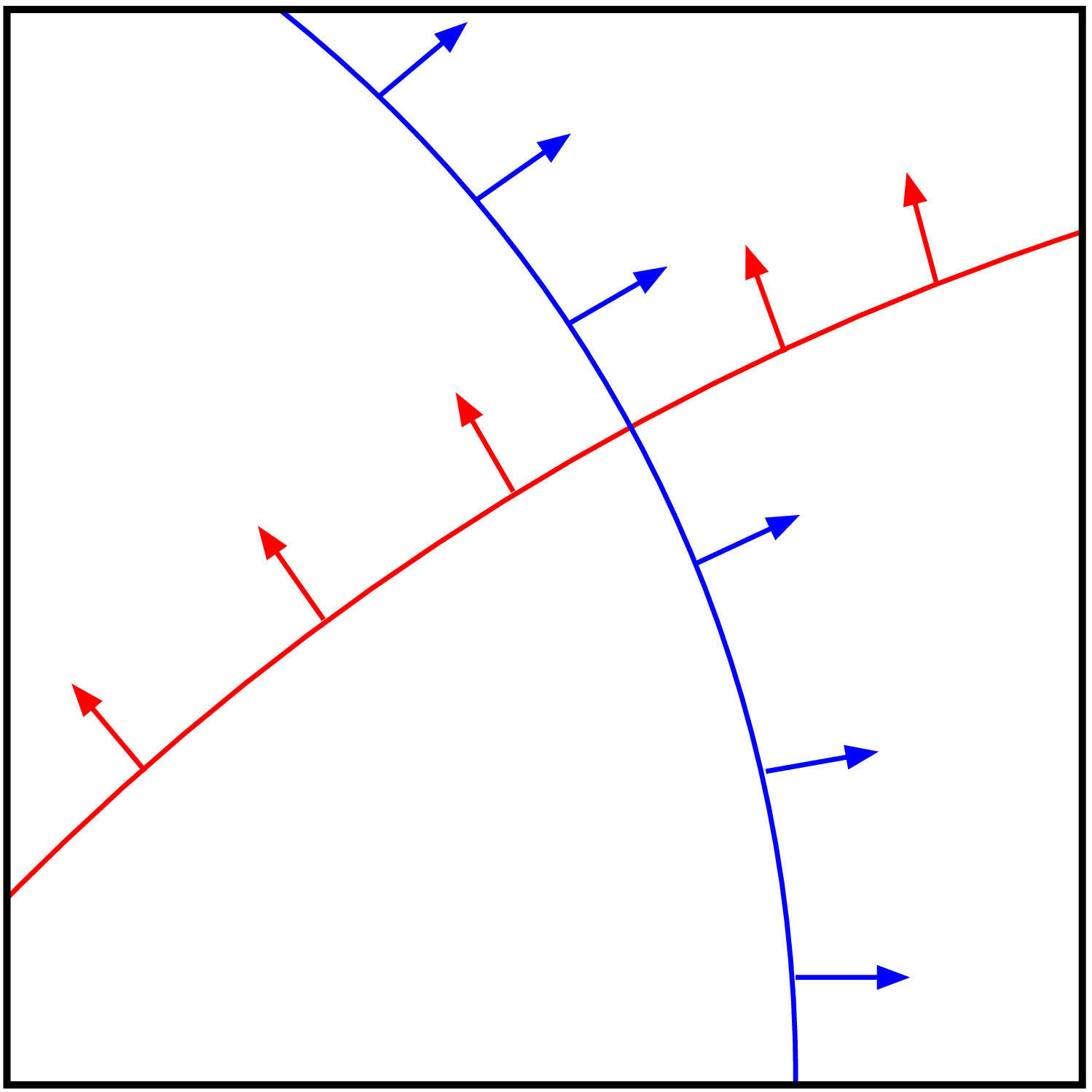}
\hspace*{3mm}
\includegraphics[width=0.25\linewidth]{./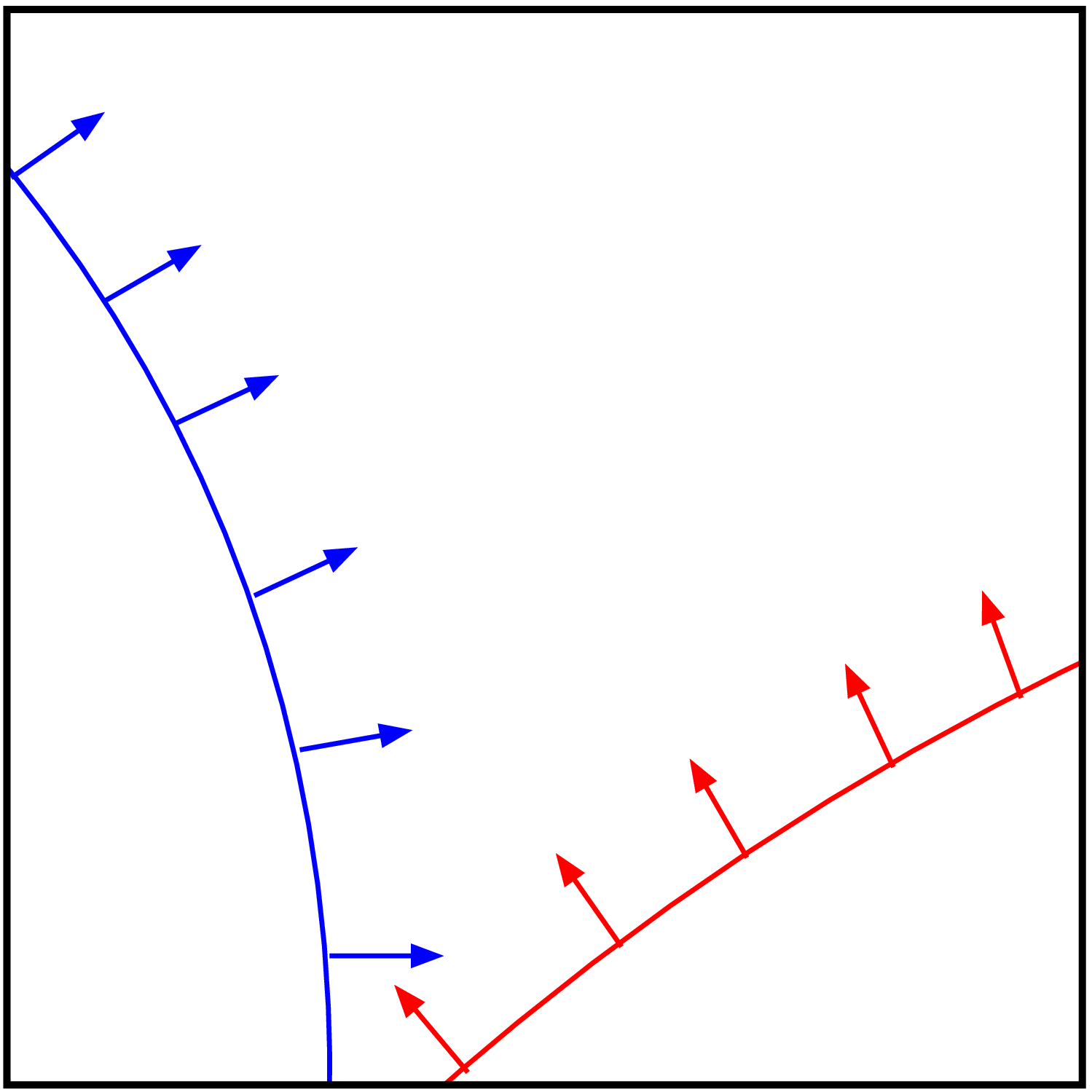}
\hspace*{3mm}
\includegraphics[width=0.25\linewidth]{./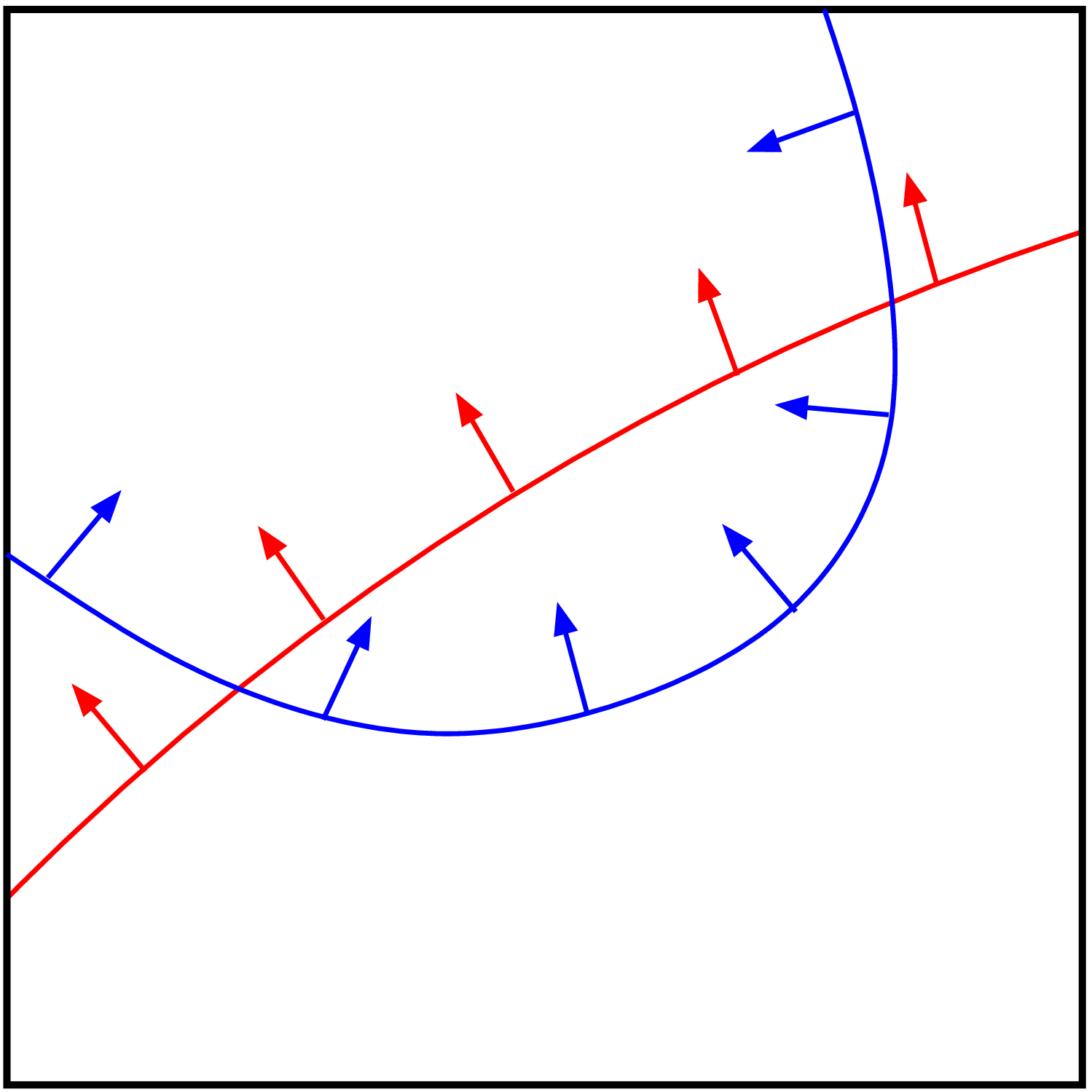}
\end{center}
\captionsetup{width=0.8\linewidth}
\caption{Some of the possible configurations of level sets of $f_1$ and $f_2$. The right-most configuration cannot happen under the assumptions of Theorem~\ref{thm:interval-crit}.}
\label{fig:LSX}
\end{figure}

\subsubsection{Small masses}
\label{subsubsec:small_masses}
In the case when one or two masses are very small, Theorem~\ref{thm:interval-crit} is not practically applicable. In this situation some solutions to the critical equation are close to the primaries, and a careful analysis combined with rigorous bounds produced with computer-assistance are needed.

The strategy to study this problem is the following: we focus on each primary with small mass, and use (local) polar coordinates $(\varrho,\varphi)$ around it to study the problem. In this setting we study the following system of equations (suppressing the dependence on masses) based on the original amended potential (\ref{eq:potential_function})
\begin{equation}\label{eq:system}
\left\{
\begin{matrix}
 \frac{\partial V}{\partial \varrho}(\varrho,\varphi)=0,\\
 \frac{\partial V}{\partial \varphi}(\varrho,\varphi)=0.
\end{matrix}
\right.
\end{equation}

It turns out that it is relatively easy to control the solutions of the equation $\frac{\partial V}{\partial \varphi}(\varrho,\varphi)=0$ for $\varphi$, obtaining four curves $(\varrho,\varphi(\varrho))$ on which we have uniform estimates over a whole range of $(m_1,m_2)$ including $(0,0)$. Turning to the remaining equation
\begin{equation*}
\frac{\partial V}{\partial \varrho}(\varrho,\varphi(\varrho))=0,
\end{equation*}
we study two cases separately: when $m_1$ alone tends to zero, and when $m_1$ and $m_2$ both tend to zero. In both cases we can prove that any solution of \eqref{eq:system} is of the form $(\varrho, \varphi(\varrho))$, and satisfies
\begin{equation*}
\frac{d}{d\varrho}\left(\frac{\partial V}{\partial \varrho}(\varrho,\varphi(\varrho))\right)\neq 0.
\end{equation*}
This implies that all solutions are regular: there are no bifurcations. We summarise our findings in a quantitative and practically applicable statement:

\begin{theorem}
\label{thm:small-masses-no-bif}
For $0 < m_i \leq 10^{-2}$ and $R=10^{-3}$, any relative equilibria $z$ of \eqref{eq:potential_function} with $\|z-p_i\|\leq R$, $i=1,2$, is non-degenerate: it does not bifurcate.
\end{theorem}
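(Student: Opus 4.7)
The plan is to execute the three-step reduction announced in Section~\ref{subsubsec:small_masses}. I work in local polar coordinates $(\varrho,\varphi)$ centered at the small-mass primary $p_i$, so that $z=p_i+\varrho(\cos\varphi,\sin\varphi)$ and
\[
V \;=\; \tfrac12\|z-c\|^2 + \frac{m_i}{\varrho} + \frac{m_j}{r_j} + \frac{m_k}{r_k},\qquad \{i,j,k\}=\{1,2,3\},
\]
with the singular piece $m_i/\varrho$ independent of $\varphi$. The first step is to analyse $\partial_\varphi V=0$. Using the identity $p_i-c=m_j(p_i-p_j)+m_k(p_i-p_k)$ (which cancels the $\varrho^0$-contribution) together with the Taylor expansion of $r_j^{-1},r_k^{-1}$ in $\varrho$, one finds
\[
\partial_\varphi V \;=\; -\tfrac{3}{2}\varrho^2\bigl[m_j\sin 2(\varphi-\theta_j)+m_k\sin 2(\varphi-\theta_k)\bigr] + O(\varrho^3),
\]
where $\theta_\ell$ denotes the angular position of $p_\ell$ seen from $p_i$. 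Because the equilateral geometry fixes $\theta_j-\theta_k=\pm\pi/3$, the leading trig polynomial has amplitude $A=\sqrt{m_j^2+m_k^2-m_jm_k}\ge \tfrac{\sqrt{3}}{2}m_k\ge \tfrac{\sqrt{3}}{6}$, so the rescaled equation $\varrho^{-2}\partial_\varphi V=0$ is regular at $\varrho=0$. The implicit function theorem then yields four smooth branches $\varphi=\varphi_1(\varrho),\dots,\varphi_4(\varrho)$, along each of which $\partial_\varphi^2 V\asymp\pm 3\varrho^2 A\ne 0$.

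Second, I substitute into the radial equation and study $F(\varrho):=\partial_\varrho V(\varrho,\varphi_k(\varrho);m)=0$. A parallel expansion gives
\[
\partial_\varrho V \;=\; m_i\varrho-\frac{m_i}{\varrho^2}+3\varrho\bigl[m_j(q_j\!\cdot\!\hat r)^2+m_k(q_k\!\cdot\!\hat r)^2\bigr]+O(\varrho^2),\qquad q_\ell:=p_i-p_\ell,
\]
and, chain-ruling along the branch with $\varphi_k'(\varrho)=-\partial_\varrho\partial_\varphi V/\partial_\varphi^2 V$,
\[
F'(\varrho)\;=\;\frac{\det\mathrm{Hess}(V)}{\partial_\varphi^2 V}\;=\;\frac{2m_i}{\varrho^3}+O(1),
\]
with the $O(1)$ remainder admitting an explicit bound in terms of $m_j,m_k$ alone. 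Hence $\det\mathrm{Hess}(V)=\partial_\varphi^2 V\cdot F'(\varrho)\asymp\pm 6m_iA/\varrho$ at the equilibrium, which is strictly nonzero for any $m_i>0$ and $\varrho\le R$; this is exactly the non-degeneracy asserted in the theorem, and it rules out bifurcations either by the implicit function theorem in the mass parameters, or equivalently by Theorem~\ref{thm:interval-crit} applied to the desingularised map.

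The main technical obstacle is the tied vanishing of $m_i$ and $\varrho$: at a relative equilibrium the radial balance forces $m_i\sim 3\varrho^3\bigl[m_j(q_j\!\cdot\!\hat r)^2+m_k(q_k\!\cdot\!\hat r)^2\bigr]$, so both $m_i/\varrho^3$ and $\det\mathrm{Hess}$ sit at order-one scales with no gap to zero, and the proof must quantify every remainder explicitly. The practical route, mirroring the $(s,t)$-desingularisation of Section~\ref{sec:reparametrizing_the_masses}, is to work throughout with the rescaled vector field $(\tilde F_1,\tilde F_2):=(\partial_\varrho V,\,\varrho^{-2}\partial_\varphi V)$, whose Jacobian determinant admits a uniform lower bound on the box $m_i\in(0,10^{-2}]$, $\varrho\in(0,10^{-3}]$ that is checkable by a short interval-arithmetic computation. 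The two sub-cases mentioned in the paper ($m_1\to 0$ alone versus $(m_1,m_2)\to(0,0)$ together) reduce to the same computation: in the second case $m_2$ is carried as an additional small parameter, but since $m_3\ge 1/3$ keeps $A$ bounded below, no further degeneracy appears.
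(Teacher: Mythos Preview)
Your proposal is correct and follows essentially the same route the paper outlines (and details in its appendix): local polar coordinates about the light primary, reduction to four angular branches $\varphi_k(\varrho)$ via the rescaled equation $\varrho^{-2}\partial_\varphi V=0$, and verification that $\tfrac{d}{d\varrho}\partial_\varrho V(\varrho,\varphi_k(\varrho))\neq 0$ along each branch, with the delicate $m_i\sim\varrho^3$ balance handled by explicit remainder bounds checked with interval arithmetic. Your expansions and the identification of the key scaling obstacle match the paper's treatment.
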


The details of all this can be found in the appendix \ref{app:small_masses}, with a thorough analysis of the solutions close to the light primaries in question, and with quantitative bounds that are both useful for proving the theorem, and of general interest for further studies.

This concludes what we use to establish Theorem~\ref{thm:mass_regions_one_and_two}.

\subsection{Lyapunov-Schmidt reduction in $\R^2$}
\label{subsec:LapSchmidt}

If we cannot invoke Theorem~\ref{thm:interval-crit} or \ref{thm:small-masses-no-bif} on a given region $\ivX\times\ivP$, we must explore the system of equations (\ref{eq:equations-det}) further.

Assume that the elimination of one variable (the Lyapunov-Schmidt
reduction) is possible in the box $\ivX$ for all $p \in \ivP$. The condition $0\notin \frac{\partial f_i}{\partial x_j}(\ivX;\ivP)$ for some $i,j\in\{1,\dots,n\}$ is sufficient for this to be possible. It ensures that $f_i$ is strictly monotone in the variable $x_j$ for all parameters $p\in\ivP$. This implies that a relation $f_i(x) = C$ implicitly defines $x_j$ in terms of the other independent variables: $x_j = x_j(x_1,\dots, x_{j-1}, x_{j+1},\dots, x_n)$. The domain of this parameterisation will depend on the constant $C$.

From now on we will work exclusively in the two-dimensional setting, and we will once again suppress the parameter dependency for clarity. As we are interested in solutions to (\ref{eq:equations-det}), we want to understand how the zero-level sets of $f_1$ and $f_2$ behave for various parameters. Without any loss of generality consider the case $(i,j)=(1,1)$, when we have
\begin{equation}\label{eq:df1dx1_is_nonzero}
  \frac{\partial f_1}{\partial x_1}(x) \neq 0, \quad x \in\ivX.
\end{equation}

Now assume also that the level set $f_1(x) = 0$ forms \textit{exactly} one connected component in $\ivX$. Then there exists a parametrisation $x_1 = x_1(x_2)$ defined on a connected domain $[x_2^-,x_2^+]\subset\ivX_2$, such that
\begin{equation}\label{eq:f1_is_zero}
 f_1(x_1,x_2)=0 \quad \mbox{if and only if} \quad x_1=x_1(x_2).
\end{equation}
We can now define the reduced form of (\ref{eq:equations-det}) as follows
\begin{equation}
g(x_2) = f_2(x_1(x_2),x_2) = 0, \qquad x_2 \in [x_2^-,x_2^+].  \label{eq:g-bif}
\end{equation}
Thus, the number of zeros of $g$ will determine the number of solutions to (\ref{eq:equations-det}).

For the remaining three cases, the analogous construction would produce
\begin{eqnarray*}
g(x_1) & = & f_2(x_1,x_2(x_1)) \qquad (i,j) = (1,2), \\
g(x_2) & = & f_1(x_1(x_2),x_2) \qquad (i,j) = (2,1), \\
g(x_1) & = & f_1(x_1,x_2(x_1)) \qquad (i,j) = (2,2).
\end{eqnarray*}

To simplify notation, we will use $y$ as the independent variable of $g$, and its domain will be denoted $\ivY = [y^-,y^+]$. Note that a sufficient condition for the uniqueness of a solution of $g(y)=0$, and hence also of (\ref{eq:equations-det}) is
\begin{equation}
 g'(y) \neq 0, \quad  y \in \ivY.  \label{eq:det-der-neq0}
\end{equation}

It turns out that (\ref{eq:det-der-neq0}) can be formulated in an invariant way. We will return to the case $(i,j) = (1,1)$ for the sake of clarity.

Implicit differentiation of (\ref{eq:f1_is_zero}) gives
\begin{equation*}
x_1'(x_2)= -\left(\frac{\partial f_1}{\partial x_1}(x_1(x_2),x_2)\right)^{-1} \frac{\partial f_1}{\partial x_2}(x_1(x_2),x_2).
\end{equation*}
Applying the chain rule to (\ref{eq:g-bif}) gives
\begin{equation*}
 g'(x_2)=  \frac{\partial f_2}{\partial x_1}(x_1(x_2),x_2) x_1'(x_2) +  \frac{\partial f_2}{\partial x_2}(x_1(x_2),x_2)
\end{equation*}
and substituting the expression for $x_1'(x_2)$ into this produces
\begin{eqnarray*}
g'(x_2) & = &
-\left(\frac{\partial f_1}{\partial x_1}\right)^{-1} \frac{\partial f_1}{\partial x_2}\frac{\partial f_2}{\partial x_1}+\frac{\partial f_2}{\partial x_2}  \\
& = &
\left(\frac{\partial f_1}{\partial x_1}\right)^{-1} \left( \frac{\partial f_1}{\partial x_1} \frac{\partial f_2}{\partial x_2} -\frac{\partial f_1}{\partial x_2}\frac{\partial f_2}{\partial x_1}\right)
=
\left(\frac{\partial f_1}{\partial x_1}\right)^{-1} \det(Df).
\end{eqnarray*}
The importance of the non-vanishing condition (\ref{eq:df1dx1_is_nonzero}) is now clear, as is the non-vanishing determinant condition of Theorem~\ref{thm:interval-crit}.

In what follows, we will refer to the constructed function $g$ as the \textit{bifurcation function}.

\subsection{Bifurcation analysis}\label{subsec:bifurcation_analysis}

Having seen how to apply the Lyapunov-Schmidt reduction, we now re-introduce the parameters to $f$ (and thus $g$). Given a bifurcation function $g:\ivY\times\ivP\to \mathbb{R}$, we would like to study the maximal number of solutions $g(y;p) = 0$ can have in $\ivY$ for $p \in \ivP$.

Instead of fully resolving the details of all possible bifurcations, we will use the following simple observation:
\begin{lemma}
\label{lem:gr}
Let $g$ be the bifurcation function as defined in Section~\ref{subsec:LapSchmidt}. Assume that for some $k \in \mathbb{Z}^+$ we have
\begin{equation}
  \frac{\partial^k g}{\partial y^k}(y;p) \neq 0 , \quad \forall (y,p) \in \ivY\times\ivP.
\end{equation}
Then for each $p \in \ivP$, the equation $g(y;p) = 0$ has at most $k$ solutions in $\ivY$.
\end{lemma}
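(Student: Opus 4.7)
The plan is to prove Lemma~\ref{lem:gr} by induction on $k$, using Rolle's theorem as the main tool. The parameter $p$ plays no active role in the argument, so I fix $p \in \ivP$ throughout and treat $g$ as a function of $y$ alone; the hypothesis simply supplies a pointwise nonvanishing condition on the $k$-th derivative of this single-variable function on the interval $\ivY$.

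For the base case $k=1$: if $g'(y) \neq 0$ for all $y \in \ivY$, then by the intermediate value theorem $g'$ has constant sign on the connected set $\ivY$, so $g$ is strictly monotone. A strictly monotone function has at most one zero, establishing the case $k=1$.

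For the inductive step, assume the lemma holds for $k-1$, and suppose $\partial^k g/\partial y^k$ is nonvanishing on $\ivY \times \ivP$. I argue by contradiction: suppose for some $p \in \ivP$ the equation $g(y;p) = 0$ has at least $k+1$ distinct solutions $y_1 < y_2 < \cdots < y_{k+1}$ in $\ivY$. Apply Rolle's theorem to each consecutive pair $(y_i, y_{i+1})$ to obtain $k$ distinct points $\eta_1 < \eta_2 < \cdots < \eta_k$ in $\ivY$ at which $\partial g/\partial y$ vanishes. Now view $h(y;p) := \partial g/\partial y (y;p)$ as a new function on $\ivY \times \ivP$; its $(k-1)$-th derivative in $y$ is $\partial^k g/\partial y^k$, which is nonzero everywhere on $\ivY \times \ivP$ by assumption. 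By the inductive hypothesis applied to $h$ with order $k-1$, the function $h(\cdot;p)$ has at most $k-1$ zeros in $\ivY$, contradicting the existence of the $k$ points $\eta_1, \ldots, \eta_k$. Hence $g(\cdot;p)$ has at most $k$ zeros.

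There is essentially no obstacle here beyond confirming that $g$ has sufficient regularity for Rolle's theorem and the inductive hypothesis to apply. This is inherited from the smoothness of $f$: since $f$ is $C^k$ and $x_1(x_2)$ is $C^k$ by the implicit function theorem (using condition (\ref{eq:df1dx1_is_nonzero})), the composition $g(y;p)$ is $C^k$ in $y$, which is exactly what the argument requires. One minor remark worth including is that $\ivY$ must be connected (an interval) for Rolle's theorem to be applied directly; this is already built into the construction of the Lyapunov--Schmidt reduction in Section~\ref{subsec:LapSchmidt}, where $[y^-, y^+]$ is taken to be a single connected component of the level set.
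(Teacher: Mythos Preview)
Your proof is correct. The paper does not include a proof of this lemma at all; it is stated there as a ``simple observation'' and left to the reader. Your argument by induction on $k$ together with Rolle's theorem is the standard and natural way to establish it, and your remarks on regularity and on the connectedness of $\ivY$ are appropriate.
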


The rightmost part of Figure~\ref{fig:LSX} illustrates the case $k=2$: the bifurcation function $g$ then has a quadratic behaviour.

Given a search region $\ivX\times\ivP$ for the original problem (\ref{eq:equations-det}), we can try to find a positive integer $k$ such that $0\notin g^{(k)}(\ivX_i;\ivP)$. Here $i$ can be any index for which the Lyapunov-Schmidt reduction works (note that then we have $\ivY\subset\ivX_i$). If we succeed, we have an upper bound on the number of solutions to $g(y;p) = 0$ in the region $\ivX_i\times\ivP\supset\ivY\times\ivP$. Note that this number translates to the original system of equations (\ref{eq:equations-det}). By construction, the solutions to $g(y;p) = 0$ are in one-to-one correspondence to those of $f(x;p) = 0$.

For the planar circular restricted 4-body problem it turns out that we only have to consider the cases $k=1,2,3$. The bifurcation function $g$ never behaves worse than a cubic function. The actual evaluation of the increasingly complicated expressions $g^{(k)}$ is achieved by \textit{automatic differentiation}. Furthermore, the Lyapunov-Schmidt reduction always succeeds in the case $(i,j) = (2,2)$, so we always have $g = g(x_1) = f_1(x_1, x_2(x_1))$.

We end this section by describing how we ensure that the level set $f_2(x) = 0$ forms exactly one connected component in $\ivX$. This is an important part of the Lyapunov-Schmidt reduction, and allows us to obtain an upper bound on the number of solutions via Lemma~\ref{lem:gr}. First we verify that $(i,j) = (2,2)$ are suitable indices by checking that $0 \notin \frac{\partial f_2}{\partial x_2}(\ivX)$. Note that this condition prevents a component of $f_2(x) = 0$ forming a closed loop inside $\ivX$; each component must enter and exit $\ivX$. Writing $\ivX = [x_1^-, x_1^+]\times [x_2^-, x_2^+]$, we next verify that $f_2(x_1^-, x_2^-) < 0 < f_2(x_1^+, x_2^+)$. This implies that $f_2$ must vanish at least twice on the boundary of $\ivX$. For each of the four sides $\ivS_i$ $(i = 1,\dots,4)$ of the rectangle $\ivX$, we compute an enclosure of the zero set $\ivZ_i = \{x\in\ivS_i\colon f_2(x) = 0\}$. On each such non-empty $\ivZ_i$, we check that $f_2$ is strictly increasing in its non-constant variable. This implies that $\cup_{i=1}^4 \ivZ_i$ must form two connected components $w_1$ and $w_2$. Each $w_i$ is made up of either one (non-empty) zero set $\ivZ_j$ or two such sets joined at a corner of $\ivX$. The level set $f_2(x) = 0$ crosses each of $w_1$ and $w_2$ transversally, and exactly once. Thus we have proved that $f_2(x) = 0$ forms exactly one connected component in $\ivX$.

\section{Computational results}\label{sec:computational_results}

We will now describe the program used for proving the results of Section~\ref{sec:general_strategy_and_key_results}. Throughout all computations, we use the parameters $(s,t)\in\calP = [0, \tfrac{1}{2}]\times [0, \tfrac{2}{3}]$ in place of the masses $(m_1, m_2, m_3)\in\calM$, as described in Section~\ref{sec:reparametrizing_the_masses}. We also represent the phase space variable $z\in\calC$ in polar coordinates $(r, \theta) \in [\tfrac{1}{3},2]\times [-\pi, \pi]$, and desingularize the equations as described in Section~\ref{sec:polar_coordinates}. This transforms the original critical equation $\nabla_z V(z;m) = 0$ into the equivalent problem $F(r,\varphi; s, t) = 0$, which is more suitable for computations.

The syntax of the program is rather straight-forward:
\begin{verbatim}
           ./pcr4bp tol minS maxS minT maxT strategy
\end{verbatim}

\begin{itemize}
\item {\cc tol} is the stopping tolerance used in the adaptive bisection process (used to discard subsets of the search region proved to have no zeros);
\item We consider parameters $s\in \ivS$ satisfying {\cc minS} $\le s\le$ {\cc maxS}.
\item We consider parameters $t\in \ivT$ satisfying {\cc minT} $\le t\le$ {\cc maxT}.
\item {\cc strategy} determines which method we use:
\begin{enumerate}
\item Explicitly count all solutions in $\calC$ -- used for $\ivS\times\ivT\subset\calP$ of very small width only.
\item Verify that there are no bifurcations taking place in $\calC$ for $(s,t)\in\ivS\times\ivT\subset\calP$.
\item Resolve all bifurcations taking place in $\calC$ for $(s,t)\in\ivS\times\ivT\subset\calP$.
\end{enumerate}
\end{itemize}
The three strategies are based on Theorem~\ref{thm:krawczyk}, Theorem~\ref{thm:interval-crit} together with Theorem~\ref{thm:small-masses-no-bif}, and Lemma~\ref{lem:gr}, respectively.

\subsection{Proof of Theorem~\ref{thm:point_masses}}

We demonstrate the program by proving Theorem~\ref{thm:point_masses}, running it on two different point-valued parameters (using strategy 1). The first one is for $(s,t) = (1/4, 1/4)$. This corresponds to the masses $m = (1/16, 3/16, 12/16)$, and produces 8 solutions.

\begin{verbatim}
./pcr4bp 1e-6 0.25 0.25 0.25 0.25 1

============================================================
========================== pcr4bp ==========================
Started computations: Fri Mar 18 17:57:37 2022

Stopping tolerance in the search: TOL = +1.000000e-06
Parameter range for s = [+2.500000e-01, +2.500000e-01]
Parameter range for t = [+2.500000e-01, +2.500000e-01]
  This corresponds to
  m1 = [+6.250000e-02, +6.250000e-02]
  m2 = [+1.875000e-01, +1.875000e-01]
  m3 = [+7.500000e-01, +7.500000e-01]
=========================================================
Using strategy #1: Trying to get an exact count valid for
the entire parameter region.
=========================================================
Searching the full configuration space:
  C = {[+3.333333e-01, +2.000000e+00],[-3.141593e+00, +3.141593e+00]}
After bisection/Krawczyk stage:
  |smallList|  : +0
  |noList|     : +185
  |yesList|    : +8
  |tightList|  : +8
  |ndtList|    : +4
=========================================================
SUCCESS: the explicit search was conclusive.
We found exactly +8 solutions.
=========================================================
Ended computations: Fri Mar 18 17:57:37 2022
=========================================================
\end{verbatim}
Here ${\cc smallList}$ is empty, signaling that the bisection stage was successful and had no unresolved subdomains. ${\cc noList}$ contains all boxes that were excluded using Theorem~\ref{thm:exclusion_principle}: there were 185 such boxes in this run. The eight elements of ${\cc yesList}$ are certified to contain unique zeros of $f$, according to Theorem~\ref{thm:krawczyk}. The same eight zeros are enclosed in the much smaller boxes stored in ${\cc tightList}$. Finally, the four boxes of $\cc ndtList$ are close to the lighter primaries, and are proven not to contain any zeros of $f$ according to the exclusion results of Section~\ref{subsec:the_configuration_space}. The output of this run is illustrated in Figure~\ref{fig:solutions}(a).

\begin{figure}[h]
\begin{center}
\includegraphics[width=0.45\linewidth]{./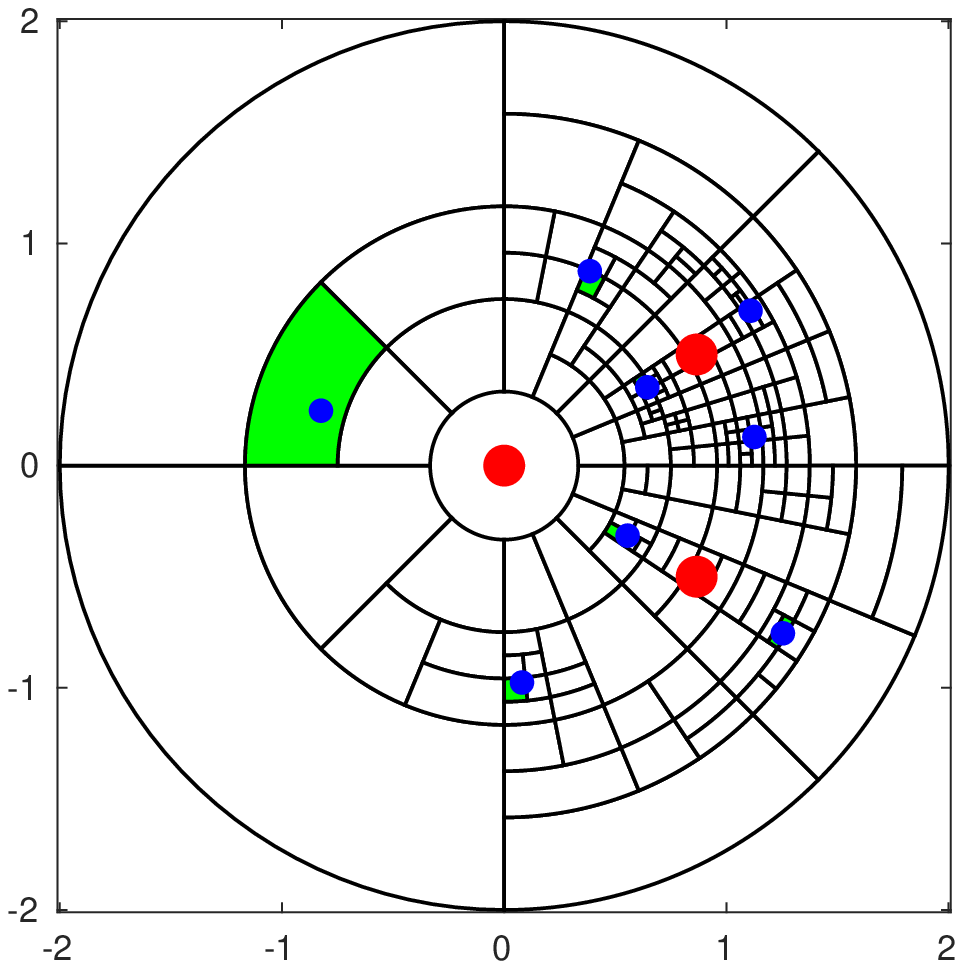}
\hspace{-0.05\linewidth}
\includegraphics[width=0.45\linewidth]{./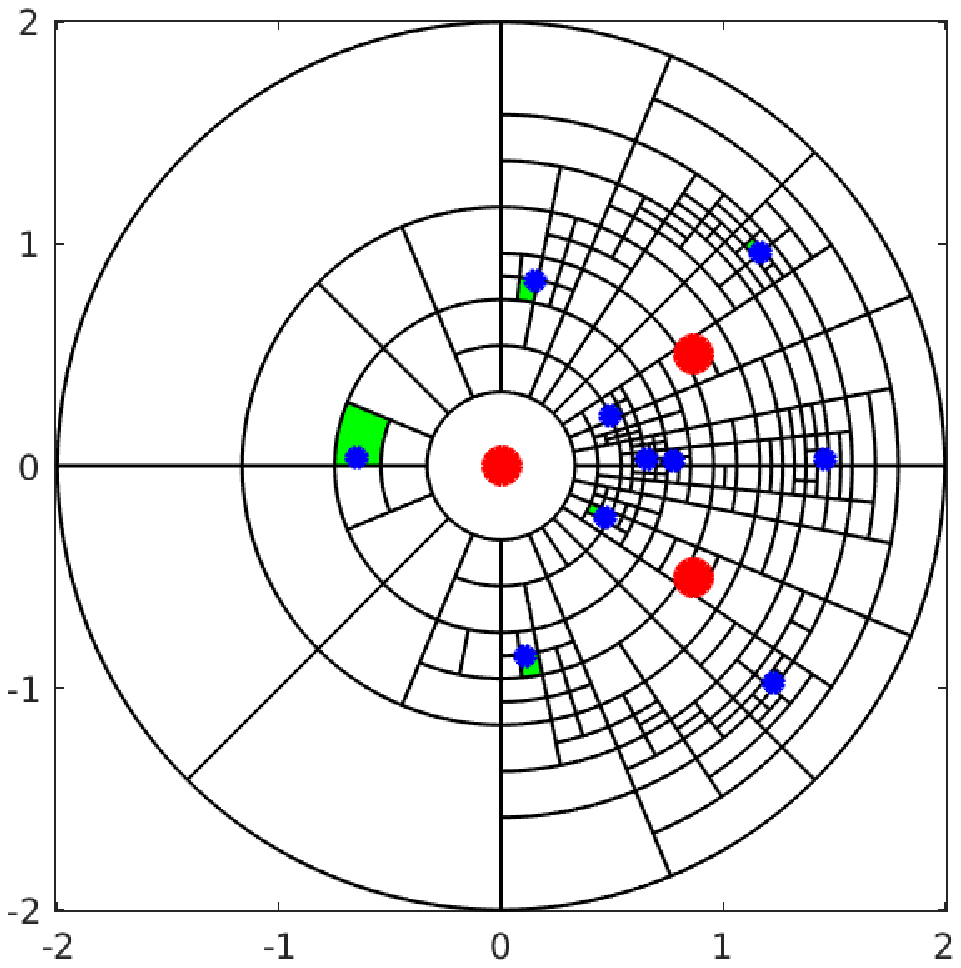}
\end{center}
\captionsetup{width=0.8\linewidth}
\caption{Relative equilibria (blue small disks) for the restricted $4$--body problem. The three primaries are red large disks. Green sectors contain unique solutions. White sectors are solution-free. (a) When masses differ substantially, there are eight relative equilibria. (b) Near equal masses yield ten relative equilibria.}
\label{fig:solutions}
\end{figure}

The second run is for $(s,t) = (9/20, 3/5)$. This corresponds to $m = (27/100, 33/100, 40/100)$, and produces 10 solutions. The output of this run is listed below, and is illustrated in Figure~\ref{fig:solutions}(b).

\begin{verbatim}
./pcr4bp 1e-6 0.45 0.45 0.60 0.60 1

============================================================
========================== pcr4bp ==========================
Started computations: Fri Mar 18 18:42:02 2022

Stopping tolerance in the search: TOL = +1.000000e-06
Parameter range for s = [+4.500000e-01, +4.500000e-01]
Parameter range for t = [+6.000000e-01, +6.000000e-01]
  This corresponds to
  m1 = [+2.700000e-01, +2.700000e-01]
  m2 = [+3.300000e-01, +3.300000e-01]
  m3 = [+4.000000e-01, +4.000000e-01]
=========================================================
Using strategy #1: Trying to get an exact count valid for
the entire parameter region.
=========================================================
Searching the full configuration space:
  C = {[+3.333333e-01, +2.000000e+00],[-3.141593e+00, +3.141593e+00]}
After bisection/Krawczyk stage:
  |smallList|  : +0
  |noList|     : +307
  |yesList|    : +10
  |tightList|  : +10
  |ndtList|    : +3
=========================================================
SUCCESS: the explicit search was conclusive.
We found exactly +10 solutions.
=========================================================
Ended computations: Fri Mar 18 18:42:02 2022
=========================================================
\end{verbatim}

These two runs complete the proof of Theorem~\ref{thm:point_masses}.

\subsection{Proof of Theorem~\ref{thm:mass_regions_one_and_two}}

Extending these results to larger domains in the $(s,t)$-parameter space, we turn to Theorem~\ref{thm:mass_regions_one_and_two}. In order to improve execution times, we pre-split the parameter domain into smaller subsets as illustrated in Figure~\ref{fig:parameter_space_splitting}. This particular splitting is rather ad-hoc, and is based on some heuristic trial runs; other splittings would work fine too.

\begin{figure}
\begin{center}
\vspace*{-5mm}
\psfrag{st2b}{$\hspace*{0cm}\calP_1$}
\psfrag{st3b}{$\hspace*{0.0cm}\calP_3$}
\psfrag{st5b}{$\hspace*{0.0cm}\calP_2$}
\includegraphics[width=0.29\linewidth]{./pictures/parameter_space_zoom1b.eps}
\hspace*{25mm}
\includegraphics[width=0.245\linewidth]{./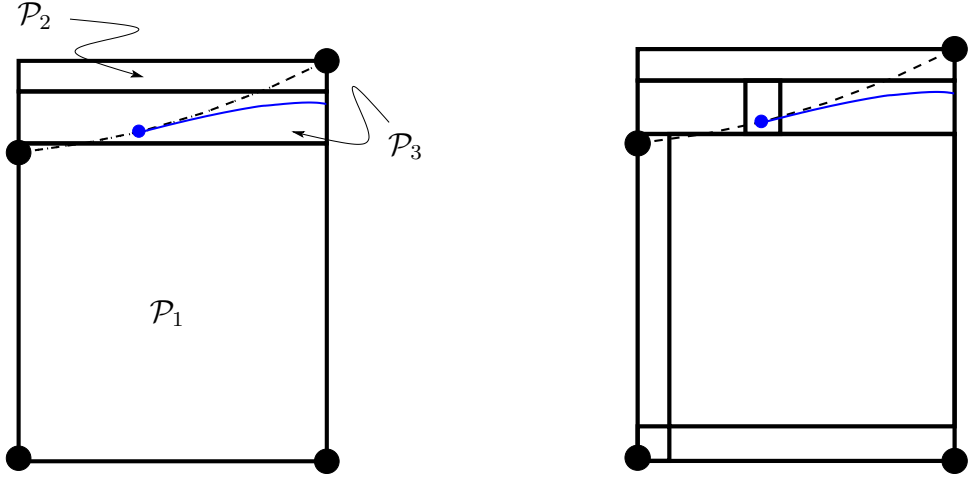}
\end{center}
\captionsetup{width=0.8\linewidth}
\caption{(a) The original partition $\calP = \calP_1\cup\calP_2\cup\calP_3$ used in the $(s,t)$-space. (b) The finer partition of $\calP$ used in our computations (not to scale). $\calP_3$ is pre-split into three subsets, and $\calP_1$ is pre-split into four. $\calP_2$ remains intact.}
\label{fig:parameter_space_splitting}
\end{figure}

We begin with the parameter set $\calP_1 = [0,0.5]\times [0,0.55]$ which is pre-split into four rectangles:
\begin{equation*}
\calP_1 = [0, 10^{-6}]\times [0, 10^{-6}] \cup [10^{-6},0.5]\times [0, 10^{-6}] \cup [0, 10^{-6}]\times [10^{-6}, 0.55] \cup [10^{-6},0.5]\times [10^{-6}, 0.55],
\end{equation*}
see Figure~\ref{fig:parameter_space_splitting}(b). Note that $\tilde{\calP_1} = \calP_1\cap\tilde\calP$ corresponds to the mass region above the bifurcation line in Figure~\ref{fig:mass_space}(b): this region contains all small masses $m_1$ and $m_2$. Examining each of the four rectangles separately, we now use {\cc strategy} number 2, which means that we are attempting to verify that no bifurcations take place for these parameter ranges. We begin with $(s,t)\in [0, 10^{-6}]\times [0, 10^{-6}]$:

\begin{verbatim}
$ ./pcr4bp 1e-6 0.00 1e-6 0.00 1e-6 2

============================================================
========================== pcr4bp ==========================
Started computations: Tue Mar 22 09:32:37 2022

Stopping tolerance in the search: TOL = +1.000000e-06
Parameter range for s = [+0.000000e+00, +1.000000e-06]
Parameter range for t = [+0.000000e+00, +1.000000e-06]
  This corresponds to
  m1 = [+0.000000e+00, +1.000000e-12]
  m2 = [+0.000000e+00, +1.000000e-06]
  m3 = [+9.999990e-01, +1.000000e+00]
=========================================================
Using strategy #2: Trying to prove that no bifurcations
take place for the entire parameter region.
=========================================================
Searching the full configuration space:
  C = {[+3.333333e-01, +2.000000e+00],[-3.141593e+00, +3.141593e+00]}
After bisection/determinant stage:
  unordered masses: +0
  |smallList|     : +0
  |noList|        : +1972
  |ndgList|       : +1637
  |ndtList|       : +99
=========================================================
SUCCESS: the implicit search was conclusive.
There are no bifurcations occurring for these parameters.
=========================================================
Ended computations: Tue Mar 22 09:32:37 2022
=========================================================
\end{verbatim}

We repeat the computation on the second rectangle $(s,t)\in [10^{-6},0.5]\times [0, 10^{-6}]$:

\begin{verbatim}
$ ./pcr4bp 1e-6 1e-6 0.50 0.00 1e-6 2

============================================================
========================== pcr4bp ==========================
Started computations: Tue Mar 22 09:32:47 2022

Stopping tolerance in the search: TOL = +1.000000e-06
Parameter range for s = [+1.000000e-06, +5.000000e-01]
Parameter range for t = [+0.000000e+00, +1.000000e-06]
  This corresponds to
  m1 = [+0.000000e+00, +5.000000e-07]
  m2 = [+0.000000e+00, +9.999990e-07]
  m3 = [+9.999990e-01, +1.000000e+00]
=========================================================
Using strategy #2: Trying to prove that no bifurcations
take place for the entire parameter region.
=========================================================
Searching the full configuration space:
  C = {[+3.333333e-01, +2.000000e+00],[-3.141593e+00, +3.141593e+00]}
After bisection/determinant stage:
  unordered masses: +0
  |smallList|     : +0
  |noList|        : +1453289
  |ndgList|       : +1268457
  |ndtList|       : +99963
=========================================================
SUCCESS: the implicit search was conclusive.
There are no bifurcations occurring for these parameters.
=========================================================
Ended computations: Tue Mar 22 09:34:13 2022
=========================================================
\end{verbatim}

We repeat the computation on the third rectangle $(s,t)\in [0, 10^{-6}]\times [10^{-6}, 0.55]$:

\begin{verbatim}
$ ./pcr4bp 1e-6 0.00 1e-6 1e-6 0.55 2

============================================================
========================== pcr4bp ==========================
Started computations: Tue Mar 22 09:34:42 2022

Stopping tolerance in the search: TOL = +1.000000e-06
Parameter range for s = [+0.000000e+00, +1.000000e-06]
Parameter range for t = [+1.000000e-06, +5.500000e-01]
  This corresponds to
  m1 = [+0.000000e+00, +5.500000e-07]
  m2 = [+9.999990e-07, +5.500000e-01]
  m3 = [+4.500000e-01, +9.999990e-01]
=========================================================
Using strategy #2: Trying to prove that no bifurcations
take place for the entire parameter region.
=========================================================
Searching the full configuration space:
  C = {[+3.333333e-01, +2.000000e+00],[-3.141593e+00, +3.141593e+00]}
After bisection/determinant stage:
  unordered masses: +954
  |smallList|     : +0
  |noList|        : +13330641
  |ndgList|       : +4592794
  |ndtList|       : +336900
NOTE: we encountered regions in parameter space that
correspond to unordered masses. These were not analyzed.
=========================================================
SUCCESS: the implicit search was conclusive.
There are no bifurcations occurring for these parameters.
=========================================================
Ended computations: Tue Mar 22 09:42:22 2022
=========================================================
\end{verbatim}

Finally, we repeat the computation on the fourth rectangle $(s,t)\in [10^{-6},0.5]\times [10^{-6}, 0.55]$:

\begin{verbatim}
$ ./pcr4bp 1e-6 1e-6 0.50 1e-6 0.55 2

============================================================
========================== pcr4bp ==========================
Started computations: Tue Mar 22 09:30:32 2022

Stopping tolerance in the search: TOL = +1.000000e-06
Parameter range for s = [+1.000000e-06, +5.000000e-01]
Parameter range for t = [+1.000000e-06, +5.500000e-01]
  This corresponds to
  m1 = [+1.000000e-12, +2.750000e-01]
  m2 = [+5.000000e-07, +5.499995e-01]
  m3 = [+4.500000e-01, +9.999990e-01]
=========================================================
Using strategy #2: Trying to prove that no bifurcations
take place for the entire parameter region.
=========================================================
Searching the full configuration space:
  C = {[+3.333333e-01, +2.000000e+00],[-3.141593e+00, +3.141593e+00]}
After bisection/determinant stage:
  unordered masses: +9125
  |smallList|     : +0
  |noList|        : +2294008
  |ndgList|       : +1018710
  |ndtList|       : +160852
NOTE: we encountered regions in parameter space that
correspond to unordered masses. These were not analyzed.
=========================================================
SUCCESS: the implicit search was conclusive.
There are no bifurcations occurring for these parameters.
=========================================================
Ended computations: Tue Mar 22 09:32:10 2022
=========================================================
\end{verbatim}

These four runs prove that $\calP_1$ is bifurcation free, which establishes part (a) of Theorem~\ref{thm:mass_regions_one_and_two}. Here we also get a report of several encountered parameter regions that are unordered; they are a consequence of the reparametrization of the mass space, and belong to the set $\calP \setminus\tilde\calP$, see Figure~\ref{fig:parameter_space_zoom1}. Such parameters are automatically discarded. Furthermore, {\cc ndgList} contains all boxes where we have established that no bifurcations are taking place. As we are encountering parameters corresponding to both $m_1$ and $m_2$ becoming (arbitrarily) small, there is a significant partitioning near the lighter primaries, resulting in many boxes in {\cc ndtList}.

Turning to part (b) of Theorem~\ref{thm:mass_regions_one_and_two}, we continue with the parameter set $\calP_2 = [0,0.5]\times [0.58, 0.67]$. Note that $\calP_2\cap\tilde\calP$ corresponds to the mass region below the bifurcation line in Figure~\ref{fig:mass_space}(b). This region contains the point of equal masses. Again we use {\cc strategy} number 2.

\begin{verbatim}
$ ./pcr4bp 1e-6 0.00 0.50 0.58 0.67 2

============================================================
========================== pcr4bp ==========================
Started computations: Tue Mar 22 09:30:08 2022

Stopping tolerance in the search: TOL = +1.000000e-06
Parameter range for s = [+0.000000e+00, +5.000000e-01]
Parameter range for t = [+5.800000e-01, +6.700000e-01]
  This corresponds to
  m1 = [+0.000000e+00, +3.350000e-01]
  m2 = [+2.900000e-01, +6.700000e-01]
  m3 = [+3.300000e-01, +4.200000e-01]
=========================================================
Using strategy #2: Trying to prove that no bifurcations
take place for the entire parameter region.
=========================================================
Searching the full configuration space:
  C = {[+3.333333e-01, +2.000000e+00],[-3.141593e+00, +3.141593e+00]}
After bisection/determinant stage:
  unordered masses: +3324
  |smallList|     : +0
  |noList|        : +50435
  |ndgList|       : +13145
  |ndtList|       : +13
NOTE: we encountered regions in parameter space that
correspond to unordered masses. These were not analyzed.
=========================================================
SUCCESS: the implicit search was conclusive.
There are no bifurcations occurring for these parameters.
=========================================================
Ended computations: Tue Mar 22 09:30:10 2022
=========================================================
\end{verbatim}

This run proves that $\calP_2$ is bifurcation free, as claimed. Note that the computational effort was smaller than that for $\calP_1$. This is due to the fact that we are not considering any parameters corresponding to small masses in this run. We have now completed the proof of Theorem~\ref{thm:mass_regions_one_and_two}.

\subsection{Proof of Theorem~\ref{thm:configuration_region_1_and_2}}

Turning to Theorem~\ref{thm:configuration_region_1_and_2}, we focus on the only remaining parameter set $\calP_3 = [0,0.5]\times [0.55, 0.58]$, which has been constructed to contain the entire line of bifurcation, see Figure~\ref{fig:parameter_space_zoom1}(b). We now use {\cc strategy} number 3, which means that we will locate and resolve all occurring bifurcations.

It turns out that all bifurcations take place within a small subset $\calC_2$ of the configuration space. In the complement $\calC_1 = \calC\setminus\calC_2$, the solutions are non-degenerate and persist for all parameter values in $\calP_3$. In light of this, the program splits the configuration space into two parts: $\calC = \calC_1\cup \calC_2$ which are examined separately. For purely technical reasons $\calC_1$ is further divided into three subregions (named {\cc C11, C12, C13} in the output).

As explained above, $\calP_3$ is pre-split into three rectangles:
\begin{equation*}
\calP_3 = [0, 0.2]\times [0.55, 0.58] \cup [0.2,0.25]\times [0.55, 0.58] \cup [0.25, 0.5]\times [0.55, 0.58],
\end{equation*}
see Figure~\ref{fig:parameter_space_splitting}(b).

We begin with $(s,t)\in [0, 0.2]\times [0.55, 0.58]$:
\begin{verbatim}
$ ./pcr4bp 1e-6 0.00 0.20 0.55 0.58 3

============================================================
========================== pcr4bp ==========================
Started computations: Tue Mar 22 09:45:19 2022

Stopping tolerance in the search: TOL = +1.000000e-06
Parameter range for s = [+0.000000e+00, +2.000000e-01]
Parameter range for t = [+5.500000e-01, +5.800000e-01]
  This corresponds to
  m1 = [+0.000000e+00, +1.160000e-01]
  m2 = [+4.400000e-01, +5.800000e-01]
  m3 = [+4.200000e-01, +4.500000e-01]
=========================================================
Using strategy #3: Doing a full bifurcation analysis for
the entire parameter region.
=========================================================
Searching the following three outer regions of configuration space:
  C1 = {[+3.333333e-01, +2.000000e+00],[+7.000000e-01, +3.141593e+00]}
  C2 = {[+3.333333e-01, +2.000000e+00],[-3.141593e+00, -2.000000e-01]}
  C3 = {[+1.000000e+00, +2.000000e+00],[-2.000000e-01, +7.000000e-01]}
After bisection/determinant stage:
  unordered masses: +160
  |smallList|     : +0
  |noList|        : +368
  |ndgList|       : +95
  |ndtList|       : +3
NOTE: we encountered regions in parameter space that
correspond to unordered masses. These were not analyzed.
=========================================================
SUCCESS: the outer bifurcation analysis was conclusive.
There are no bifurcations occurring for these parameters.
=========================================================
Searching the single, inner region of configuration space:
  C0 = {[+3.333333e-01, +1.000000e+00],[-2.000000e-01, +7.000000e-01]}
Splitting tolerances for parameters: [+1.000000e-08, +5.000000e-02].
After the bisection/bifurcation stage:
  |smallList|: +0
  |s0List|   : +11
  |s1List|   : +12
  |s2List|   : +0
  |s3List|   : +0
  |s111List| : +0
  |s210List| : +0
  |s300List| : +0
There are subregions of C0 with at most one solution.
NOTE: we encountered regions in parameter space that
correspond to unordered masses. These were not analyzed.
=========================================================
SUCCESS: the inner bifurcation analysis was conclusive.
=========================================================
Ended computations: Tue Mar 22 09:45:20 2022
=========================================================
\end{verbatim}

This run is not so interesting: there are no bifurcations taking place in the parameter region. Indeed, most of these parameters are unordered, and are discarded immediately. Apart from these, there are some parameters that are provably bifurcation-free (we are actually using the techniques of {\cc strategy} 2 as part of the computation). No higher order bifurcations take place at all.

The next two parameter regions are much more challenging. We continue with $(s,t)\in [0.2, 0.25]\times [0.55, 0.58]$:
\begin{verbatim}
./pcr4bp 1e-6 0.25 0.50 0.55 0.58 3

============================================================
========================== pcr4bp ==========================
Started computations: Tue Mar 22 09:46:11 2022

Stopping tolerance in the search: TOL = +1.000000e-06
Parameter range for s = [+2.500000e-01, +5.000000e-01]
Parameter range for t = [+5.500000e-01, +5.800000e-01]
  This corresponds to
  m1 = [+1.375000e-01, +2.900000e-01]
  m2 = [+2.750000e-01, +4.350000e-01]
  m3 = [+4.200000e-01, +4.500000e-01]
=========================================================
Using strategy #3: Doing a full bifurcation analysis for
the entire parameter region.
=========================================================
Searching the following three outer regions of configuration space:
  C1 = {[+3.333333e-01, +2.000000e+00],[+7.000000e-01, +3.141593e+00]}
  C2 = {[+3.333333e-01, +2.000000e+00],[-3.141593e+00, -2.000000e-01]}
  C3 = {[+1.000000e+00, +2.000000e+00],[-2.000000e-01, +7.000000e-01]}
After bisection/determinant stage:
  unordered masses: +0
  |smallList|     : +0
  |noList|        : +1922
  |ndgList|       : +526
  |ndtList|       : +10
=========================================================
SUCCESS: the outer bifurcation analysis was conclusive.
There are no bifurcations occurring for these parameters.
=========================================================
Searching the single, inner region of configuration space:
  C0 = {[+3.333333e-01, +1.000000e+00],[-2.000000e-01, +7.000000e-01]}
Splitting tolerances for parameters: [+1.000000e-08, +5.000000e-02].
After the bisection/bifurcation stage:
  |smallList|: +0
  |s0List|   : +6
  |s1List|   : +149
  |s2List|   : +0
  |s3List|   : +253
  |s111List| : +103
  |s210List| : +150
  |s300List| : +0
There are subregions of C0 with at most three solutions.
There are subregions of C0 with at most one solution.
NOTE: we encountered regions in parameter space that
correspond to unordered masses. These were not analyzed.
=========================================================
SUCCESS: the inner bifurcation analysis was conclusive.
=========================================================
Ended computations: Tue Mar 22 09:46:53 2022
=========================================================
\end{verbatim}

Let us now explain what we can learn from this run.

The first part of the run verifies that no bifurcations take place within $\calC_1$ for parameters in $\calP_3$: we use {\cc strategy} 2 to establish this fact. Figure~\ref{fig:run4_outer_bw} (a) illustrates the outcome of this part of the computations.

\begin{figure}[h]
\begin{center}
\includegraphics[width=0.48\linewidth]{./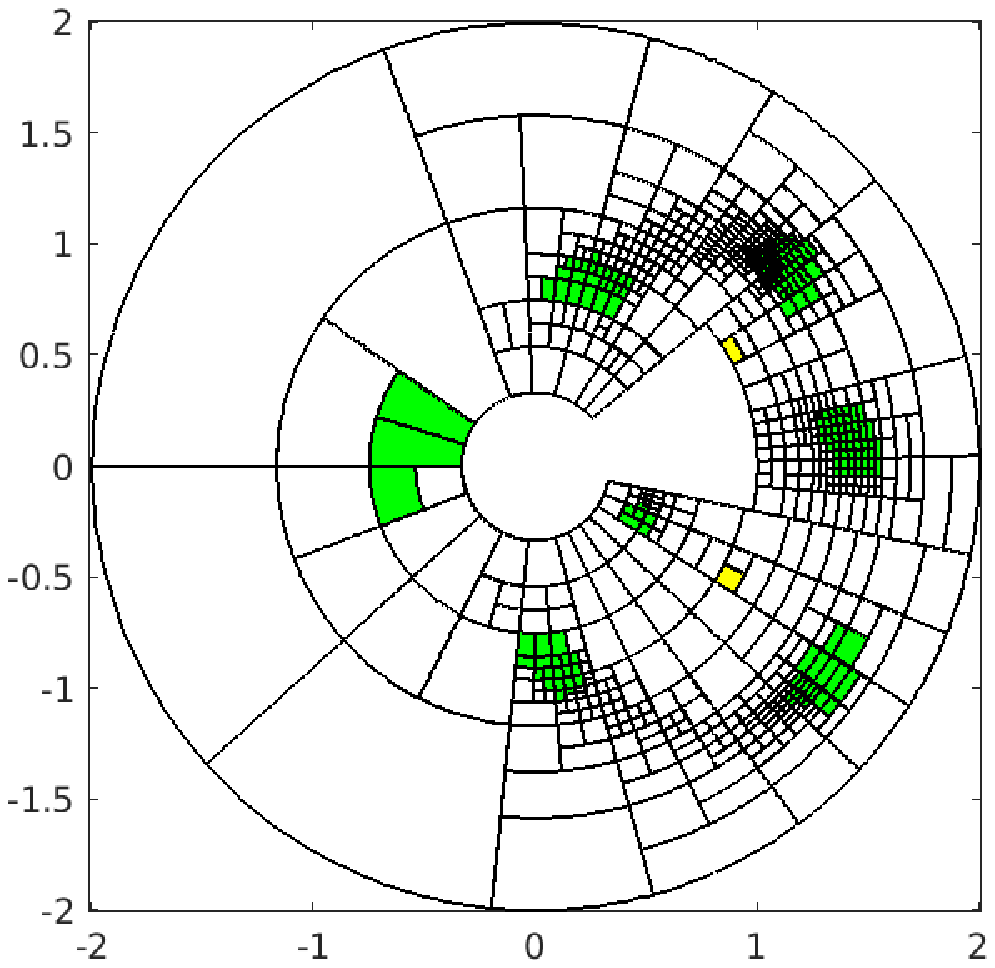}
\hspace*{0mm}
\includegraphics[width=0.48\linewidth]{./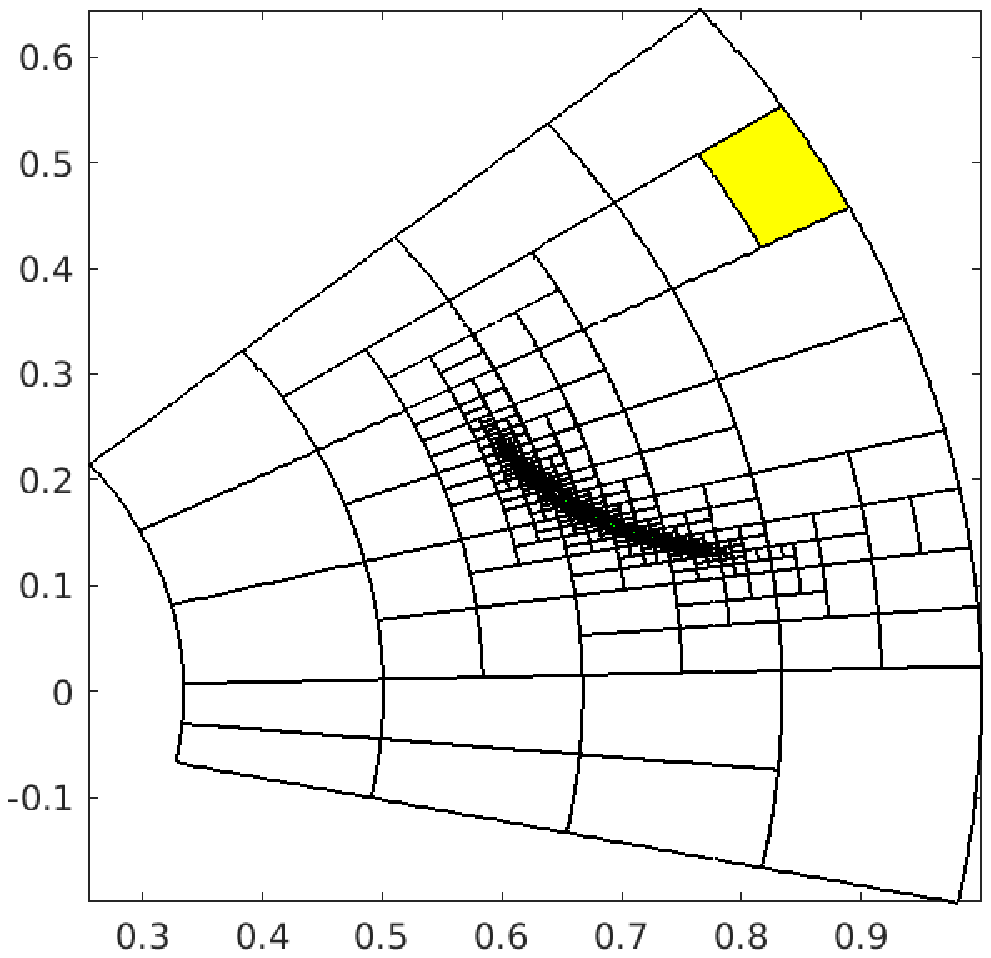}
\end{center}
\captionsetup{width=0.8\linewidth}
\caption{(a) There are no bifurcations taking place inside $\calC_1$. For all parameters in $\calP_3$, there exist seven solutions here, each located in one of the green connected regions. (b) Inside $\calC_2$ two kinds of bifurcations occur: quadratic and cubic. As before, some exclusion theorems are valid in the yellow sections.}
\label{fig:run4_outer_bw}
\end{figure}

The second part of the run focuses on the remaining portion of the configuration space $\calC_2$ (named ${\cc C0}$ in the output); in polar coordinates $\calC_2 = [\tfrac{1}{3},1]\times [\tfrac{2}{10}, \tfrac{7}{10}]$. Throughout these computations, the parameter region $\calP_3$ is adaptively bisected into many smaller sets; each one being examined via various test. When successfully classified, a parameter set is stored in one of several lists. Together with each such parameter set, we also store a covering of all possible solutions in $\calC_2$. The lists are organized as follows:
\begin{itemize}
\item[1.] {\cc s0List}: the parameters are unordered (we do not consider these).
\item[2.] {\cc s1List}: the number of solutions is at most one and fixed (no bifurcations) in $\calC_2$.
\item[3.] {\cc s111List}: there are three connected components in $\calC_2$; in each one of them the number of solutions is at most one and fixed (no bifurcations).
\item[4.] {\cc s210List}: there are two connected components in $\calC_2$ where solutions may reside. For one of these the number of solutions is at most one and fixed (no bifurcations). For the second component there can be 0, 1, or 2 solutions (a quadratic bifurcation).
\item[5.] {\cc s300List}: there is one connected component in $\calC_2$ where the number of solutions can be 1, 2 or 3 (a cubic bifurcation).
\end{itemize}
We also mention that {\cc s3List} is the union of {\cc s111List}, {\cc s210List}, and {\cc s300List}.

Our final computation deals with the region $(s,t)\in [0.25, 0.5]\times [0.55, 0.58]$:
\begin{verbatim}
$ ./pcr4bp 1e-6 0.25 0.50 0.55 0.58 3

============================================================
========================== pcr4bp ==========================
Started computations: Tue Mar 22 09:46:11 2022

Stopping tolerance in the search: TOL = +1.000000e-06
Parameter range for s = [+2.500000e-01, +5.000000e-01]
Parameter range for t = [+5.500000e-01, +5.800000e-01]
  This corresponds to
  m1 = [+1.375000e-01, +2.900000e-01]
  m2 = [+2.750000e-01, +4.350000e-01]
  m3 = [+4.200000e-01, +4.500000e-01]
=========================================================
Using strategy #3: Doing a full bifurcation analysis for
the entire parameter region.
=========================================================
Searching the following three outer regions of configuration space:
  C1 = {[+3.333333e-01, +2.000000e+00],[+7.000000e-01, +3.141593e+00]}
  C2 = {[+3.333333e-01, +2.000000e+00],[-3.141593e+00, -2.000000e-01]}
  C3 = {[+1.000000e+00, +2.000000e+00],[-2.000000e-01, +7.000000e-01]}
After bisection/determinant stage:
  unordered masses: +0
  |smallList|     : +0
  |noList|        : +1922
  |ndgList|       : +526
  |ndtList|       : +10
=========================================================
SUCCESS: the outer bifurcation analysis was conclusive.
There are no bifurcations occurring for these parameters.
=========================================================
Searching the single, inner region of configuration space:
  C0 = {[+3.333333e-01, +1.000000e+00],[-2.000000e-01, +7.000000e-01]}
Splitting tolerances for parameters: [+1.000000e-08, +5.000000e-02].
After the bisection/bifurcation stage:
  |smallList|: +0
  |s0List|   : +6
  |s1List|   : +149
  |s2List|   : +0
  |s3List|   : +253
  |s111List| : +103
  |s210List| : +150
  |s300List| : +0
There are subregions of C0 with at most three solutions.
There are subregions of C0 with at most one solution.
NOTE: we encountered regions in parameter space that
correspond to unordered masses. These were not analyzed.
=========================================================
SUCCESS: the inner bifurcation analysis was conclusive.
=========================================================
Ended computations: Tue Mar 22 09:46:53 2022
=========================================================
\end{verbatim}
The main difference to the previous run, is that there are no cubic bifurcations in this parameter region; only quadratic. Indeed, {\cc s300List} is empty, but {\cc s210List} is not.

In summary, the classifications of the parameter sets corresponding to the lists produced by these three runs are illustrated in Figure~\ref{fig:P3_final4}.
\begin{figure}[h]
\begin{center}
\includegraphics[width=0.99\linewidth]{./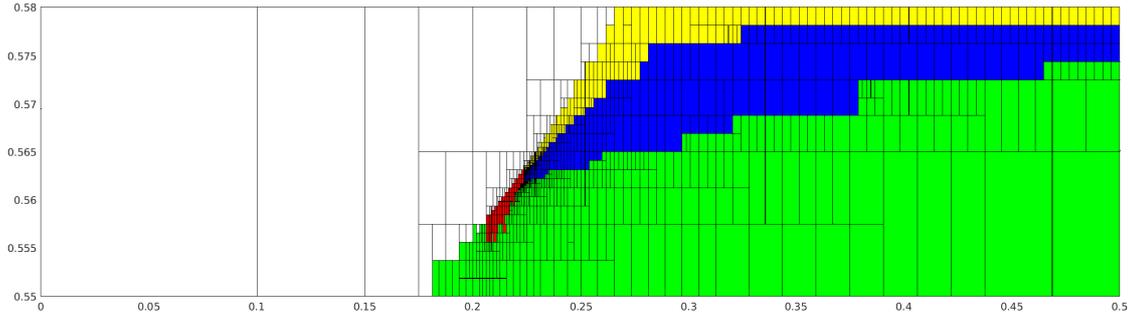}
\end{center}
\captionsetup{width=0.8\linewidth}
\caption{The bifurcations (and non-bifurcations) taking place within the parameter region $\calP_3 = [0, 0.5]\times[0.55, 0.58]$. There are five groups of parameters illustrated here: 1. the white region corresponds to {\cc s0List}; 2. the green region corresponds to {\cc s1List}; 3. the yellow region corresponds to {\cc s111List}; 4. the blue region corresponds to {\cc s210List}; 5. the red region corresponds to {\cc s300List}. Each group of parameters forms a simply connected set.}
\label{fig:P3_final4}
\end{figure}
The data presented in Figure~\ref{fig:P3_final4} (and Figure~\ref{fig:P3_final4_zoom}) combined with our previous results provide all information needed to get an accurate count on the number of solutions.

First, note that the parameters in {\cc s1List} form a connected set having a non-empty intersection with $\calP_1$. We can therefore use part (a) of Theorem~\ref{thm:mass_regions_one_and_two} to conclude that each parameter in {\cc s1List} will yield exactly eight solutions in $\calC$: seven in $\calC_1$ and one in $\calC_2$.

Similarly, we note that the parameters in {\cc s111List} form a connected set having a non-empty intersection with $\calP_2$. Using part (b) of Theorem~\ref{thm:mass_regions_one_and_two} we conclude that out of the ten solutions in $\calC$, seven reside in $\calC_1$ and the remaining three belong to $\calC_2$.

Turning to {\cc s210List}, these parameters form a connected set having a non-empty intersection with both {\cc s1List} and {\cc s111List}. The bifurcation-free connected component of $\calC_2$ detected for all parameters in {\cc s210List} must, by continuity, carry exactly one solution in $\calC_2$. Thus each parameter in {\cc s210List} yields 1, 2 or 3 solutions in $\calC_2$.

\begin{figure}[ht]
\begin{center}
\includegraphics[width=0.40\linewidth]{./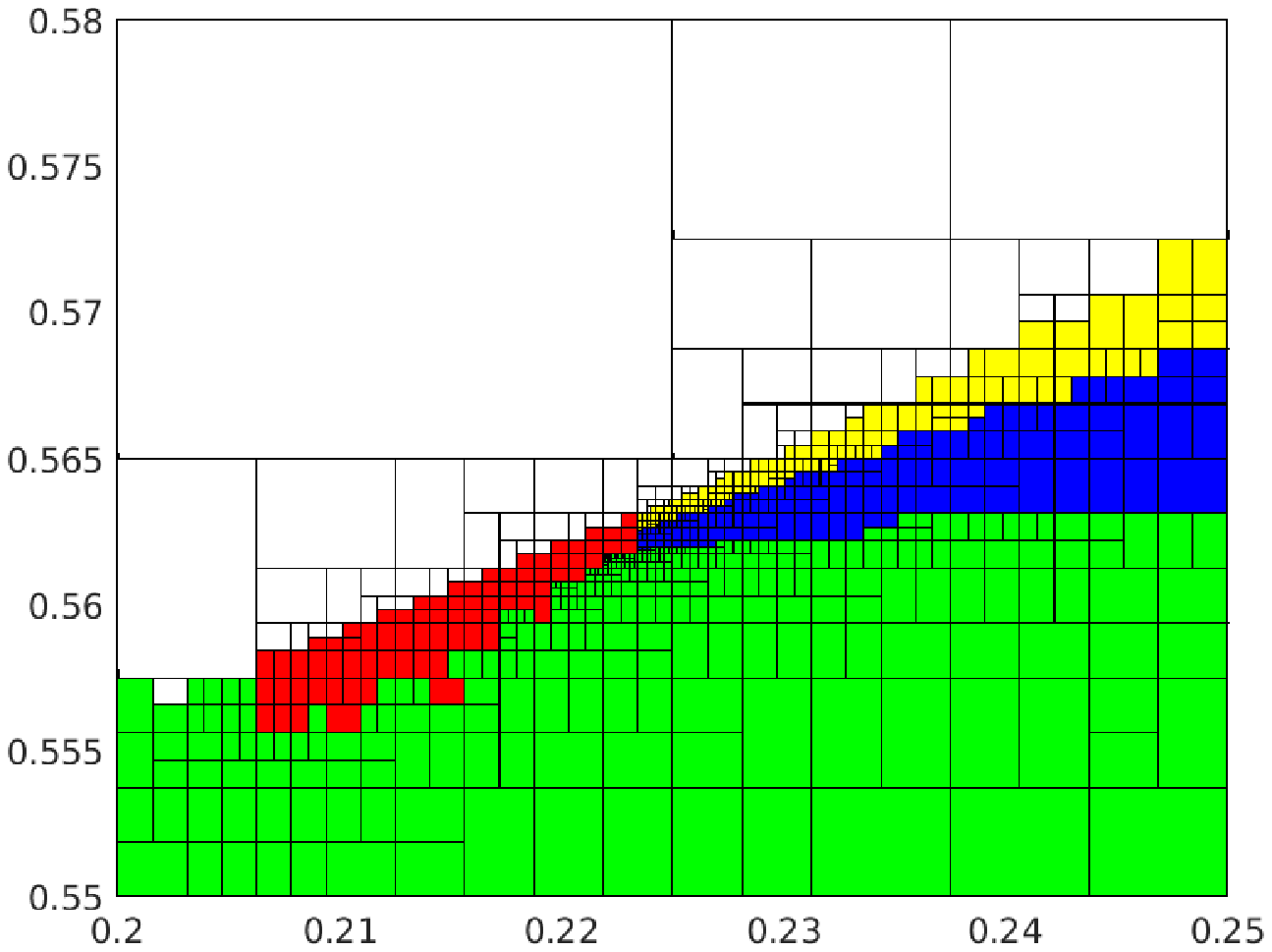}
\hspace*{10mm}
\includegraphics[width=0.40\linewidth]{./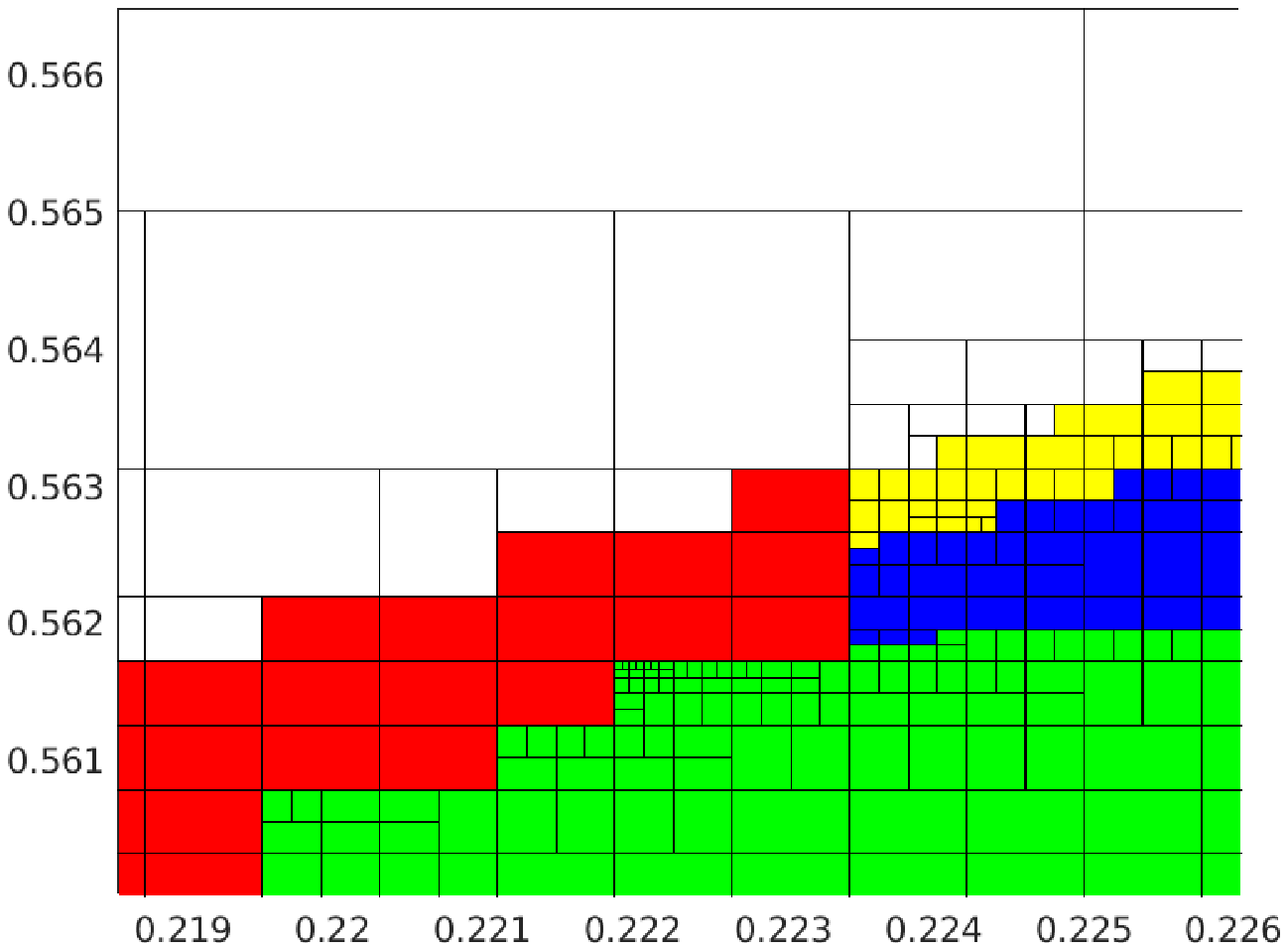}
\end{center}
\captionsetup{width=0.8\linewidth}
\caption{Successive zoom-ins toward the region of finest subdivision in Figure~\ref{fig:P3_final4}.}
\label{fig:P3_final4_zoom}
\end{figure}

Finally, we discuss the parameters in {\cc s300List}. Without additional information, the condition used to indicate the presence of a cubic bifurcation does not exclude there being no solutions; it only bounds the number of solutions from above by three. We do, however, guarantee the existence of at least one solution for elements of {\cc s300List} via an extra topological check performed during the computations.

As mentioned above, each parameter set stored in {\cc s300List} comes equipped with a cover of the associated solution set $\{\ivC_i\}_{i=1}^N$ making up a connected subset of $\calC_2$. Forming the rectangular hull $\ivC$ of all $\ivC_i$, $i=1,\dots,N$ we prove that there must be a zero of $f$ inside $\ivC$ (and thus inside $\calC_2$) using the following topological theorem.

\begin{theorem}\label{thm:topological_zero}
Let $f\colon [x^-, x^+]\times [y^-, y^+]\to \R^2$ be a continuous function, and assume that the following holds:
\begin{itemize}
\item[1.] Both $f_1$ and $f_2$ are negative on the two sides $\{x^-\}\times [y^-, y^+]$ and $[x^-, x^+]\times\{y^-\}$.
\item[2.] Both $f_1$ and $f_2$ are positive at the upper-right corner $(x^+, y^+)$.
\item[3.] $\max\{x\in [x^-, x^+] \colon f_1(x,y^+) = 0\} < \min\{x\in [x^-, x^+] \colon f_2(x,y^+) = 0\}$.
\item[4.] $\max\{y\in [y^-, y^+] \colon f_2(x^+,y) = 0\} < \min\{y\in [y^-, y^+] \colon f_1(x^+,y) = 0\}$.
\end{itemize}
Then $f$ has (at least) one zero in $[x^-, x^+]\times [y^-, y^+]$.
\end{theorem}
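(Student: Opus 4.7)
The plan is to verify the existence of a zero of $f$ in $B = [x^-, x^+] \times [y^-, y^+]$ by showing that the Brouwer degree $\deg(f, B, 0)$ is nonzero. First I would check that $f \neq (0,0)$ on $\partial B$, so that the degree is well defined. Conditions 1 and 2 give strict nonvanishing on the left and bottom edges and at the corner $(x^+, y^+)$. Writing $a := \max\{x : f_1(x, y^+) = 0\}$ and $b := \min\{x : f_2(x, y^+) = 0\}$, a joint zero $f(x_0, y^+) = 0$ would force $x_0 \leq a$ and $x_0 \geq b$, contradicting condition 3; the analogous definitions $y_1 := \min\{y : f_1(x^+, y) = 0\}$ and $y_2 := \max\{y : f_2(x^+, y) = 0\}$ together with condition 4 rule out joint zeros on the right edge.

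To compute the degree, I would count signed crossings of the positive $x$-axis by $f \circ \partial B$ — that is, points on $\partial B$ where $f_2 = 0$ and $f_1 > 0$ — as the boundary is traversed counterclockwise. By condition 1, $f_2 < 0$ on the left and bottom edges, precluding crossings there. On the right edge every zero of $f_2$ lies in $[y^-, y_2]$, and condition 4 together with the choice of $y_1$ forces $f_1 < 0$ on $[y^-, y_1)$; since $y_2 < y_1$, these zeros contribute nothing. Consequently all contributing crossings lie in the top-edge segment $(a, x^+]$, where $f_1 > 0$.

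Because $a \in [x^-, b)$ gives $f_2(a, y^+) < 0$ while $f_2(x^+, y^+) > 0$, the restriction $f_2(\cdot, y^+)$ changes sign an odd number of times on $(a, x^+]$. The strict inequalities in conditions 1--4 are preserved under sufficiently small $C^0$-perturbations of $f$, so I would first treat a generic perturbation for which these zeros $b_1 < b_2 < \cdots < b_k$ are simple and $k$ is odd. Traversing the top edge leftward, the rightmost zero $b_k$ drops $f_2$ from positive to negative while $f_1 > 0$, contributing $-1$ to the winding; signs alternate at the remaining zeros, yielding a total of $-1$ since $k$ is odd. Hence $\deg(f, B, 0) = -1 \neq 0$, and $f$ has a zero in $B$; the non-generic case follows from compactness of $B$ applied to a sequence of perturbed problems. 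The main obstacle is bookkeeping the possibly many oscillations of $f_2$ along the top edge, but the essential observation is that only the parity of the number of these zeros enters the winding count, and that parity is pinned down by the boundary sign data of conditions 1--4.
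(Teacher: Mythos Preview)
Your proof is correct and takes a genuinely different route from the paper's. The paper selects $x^\star \in (a,b)$ and $y^\star \in (y_2, y_1)$, forms the quadrilateral with corners $(x^-,y^-)$, $(x^\star, y^+)$, $(x^+, y^+)$, $(x^+, y^\star)$, and after a continuous deformation applies the Poincar\'e--Miranda theorem directly to that region. You instead compute the Brouwer degree by counting signed crossings of the positive $x$-axis along $f\circ\partial B$, arriving at $\deg(f,B,0) = -1$. Your argument avoids constructing any deformation and localises the entire winding contribution to the top edge; it invokes the more general machinery of degree theory but the count is fully explicit, and the perturbation/compactness step at the end is a clean way to handle non-transversal crossings. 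The paper's approach is lighter on tools --- Miranda is essentially a two-dimensional intermediate value theorem --- but leaves the deformation step only sketched. Both proofs rest on the same underlying observation: conditions~3 and~4 separate the zero sets of $f_1$ and $f_2$ along the top and right edges, which is exactly what forces nontrivial winding in your formulation and the Miranda sign pattern in the paper's.
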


Of course, the theorem remains true when we reverse all appearing signs, or interchange the function components. We can also relax assumption 1 and simply demand that $f_1$ is negative on $[x^-, x^+]\times\{y^-\}$, and $f_2$ is negative on $\{x^-\}\times [y^-, y^+]$.  We keep the current (stronger) assumptions as they actually hold for the problem at hand. A typical realization of these are illustrated in Figure~\ref{fig:bifurcation}.

\begin{figure}[h]
\begin{center}
\includegraphics[width=0.6\linewidth]{./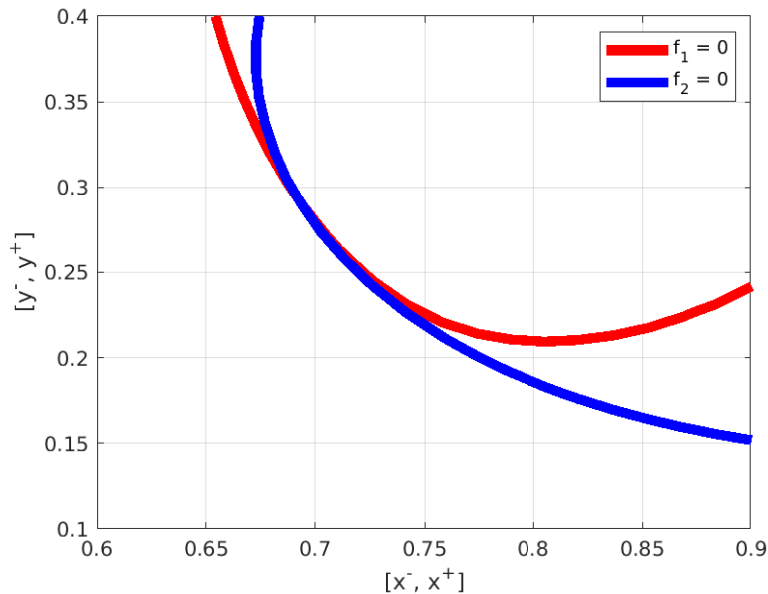}
\end{center}
\captionsetup{width=0.8\linewidth}
\caption{An illustration of how the (set-valued) zero-sets of $f_1$ and $f_2$ behave for typical parameters in {\cc s300List}.}
\label{fig:bifurcation}
\end{figure}

\begin{proof}
By the intermediate value theorem, assumptions 1 and 2 imply that both $f_1$ and $f_2$ change sign (and therefore vanish) somewhere along the two sides $[x^-, x^+]\times\{y^+\}$ and $\{x^+\}\times [y^-, y^+]$. Therefore the sets appearing in assumptions 3 and 4 are non-empty, and the inequalities can be checked. Now, let $x^\star$ satisfy $\max\{x\in [x^-, x^+] \colon f_1(x,y^+) = 0\} < x^\star < \min\{x\in [x^-, x^+] \colon f_2(x,y^+) = 0\}$. Similarly, let $y^\star$ satisfy $\max\{y\in [y^-, y^+] \colon f_2(x^+,y) = 0\} < y^\star < \min\{y\in [y^-, y^+] \colon f_1(x^+,y) = 0\}$. Then, by a continuous deformation, we can form a new rectangle with corners $(x^-,y^-), (x^\star, y^+), (x^+, y^+)$, and $(x^+, y^\star)$ on which we can directly apply the Poincar\'e-Miranda theorem, see \cite{Miranda40}. It follows that $f$ has a zero in the original rectangle.
\end{proof}

As all assumptions of the theorem are open, we can extend it to the case when $f$ depends on (set-valued) parameters. This is what we use for the parameters forming ${\cc s300List}$.

\subsection{Timings}\label{subsec_timings}
We end this section by reporting the timings of all computations. These we carried out sequencially on a latop, using a single thread on an Intel Core i7-7500U CPU running at 2.70GHz. The memory requirements are very low, and are not reported. For all computations we used the same splitting tolerance: ${\cc tol}  = 10^{-6}$. The total wall time amounts to 16m:29s. This can of course be massively reduced by a finer pre-splitting in parameter space, combined with a simple script for parallel execution.
\begin{table}
\centering 
\begin{tabular}{c c c c c c}
\hline\hline                        
sMin & sMax & tMin & tMax & strategy & time \\ [0.5ex]
\hline                                             
0.25 & 0.25 & 0.25 & 0.25 & 1 & 0m00s  \\
0.45 & 0.45 & 0.60 & 0.60 & 1 & 0m00s  \\
0.00 & 0.50 & 0.58 & 0.67 & 2 & 0m02s  \\          
0.00 & 1e-6 & 0.00 & 1e-6 & 2 & 0m01s  \\
1e-6 & 0.50 & 0.00 & 1e-6 & 2 & 1m30s  \\
0.00 & 1e-6 & 1e-6 & 0.55 & 2 & 7m40s  \\
1e-6 & 0.50 & 1e-6 & 0.55 & 2 & 1m38s  \\
0.00 & 0.20 & 0.55 & 0.58 & 3 & 0m02s  \\
0.20 & 0.25 & 0.55 & 0.58 & 3 & 4m53s  \\
0.25 & 0.50 & 0.55 & 0.58 & 3 & 0m43s  \\[1ex]      
\hline
\end{tabular}\label{tab:timings}
\caption{Wall timings for the computer-assisted part of the proof.}
\end{table}

\section{Conclusions and future work}

We have demonstrated a novel way to account for all relative equilibria in the planar, circular, restricted 4-body problem, and used it to give a new proof of the results by Barros and Leandro (Theorem~\ref{thm:main_result}). The novely of our approach is that it does not rely upon any algebraic considerations; it is purely analytic. As such it is completely insensitive to the exact shape of the gravitational potential, and generalizes to a wider range of problems. The main advantage, however, is that our method is amenable to set-valued computations, and as such can use the mature techniques and machinery available to solve non-linear equations with computer-assisted methods. This, in turn, gives us a realistic expectation that our approach is transferable to harder instances of the $n$-body problem, and our next challenge will be to work on the \textit{unrestricted} 4-body problem, where we expect the number of relative equilibria to range between 32 and 50 (depending on the masses of the four primaries).

\section{Acknowledgments}
We are very greatful to Professor Carles Sim\'o for bringing this problem to our attention and for fruitful discussions.
W. T. has been partially supported by VR grant 2013-4964.
P. Z. has been partially supported by the NCN grant 2019/35/B/ST1/00655.

\bibliographystyle{plain}
\bibliography{pcr4bp}

\begin{thebibliography}{10}

\bibitem{AlbouyKaloshin12}
A.~Albouy and V.~Kaloshin.
\newblock Finiteness of central configurations of five bodies in the plane.
\newblock {\em Annals of Mathematics}, 176:535--588, 2012.

\bibitem{BarrosLeandro11}
J.~Barros and E.~Leandro.
\newblock The set of degenerate central configurations in the planar restricted
  four-body problem.
\newblock {\em SIAM J. Math. Analysis}, 43:634--661, 01 2011.

\bibitem{BarrosLeandro14}
J.~Barros and E.~Leandro.
\newblock Bifurcations and enumeration of classes of relative equilibria in the
  planar restricted four-body problem.
\newblock {\em SIAM Journal on Mathematical Analysis}, 46, 03 2014.

\bibitem{Euler1767}
L.~Euler.
\newblock De motu rectilineo trium corporum se mutuo attrahentium.
\newblock {\em Novi Comm. Acad. Sci. Imp. Petrop.}, 11:144--151, 1767.

\bibitem{HamtonMoeckel06}
M.~Hampton and R.~Moeckel.
\newblock Finiteness of relative equilibria of the four-body problem.
\newblock {\em Inventiones Mathematicae}, 163:289--312, 2006.

\bibitem{Krawczyk}
R.~Krawczyk.
\newblock Newton-algorithmen zur besstimmung von nullstellen mit
  fehlerschranken.
\newblock {\em Computing}, 4:187--201, 1969.

\bibitem{KulevichEtAl10}
J.~Kulevich, G.~Roberts, and C.~Smith.
\newblock Finiteness in the planar restricted four-body problem.
\newblock {\em Qualitative Theory of Dynamical Systems}, 8:357--370, 12 2010.

\bibitem{Lagrange1772}
J.L. Lagrange.
\newblock Essai sur le probl\`eme des trois corps.
\newblock {\em Oevres}, 6:229--324, 1772.

\bibitem{Miranda40}
C.~Miranda.
\newblock {\em Un'osservazione su un teorema di Brouwer}.
\newblock Consiglio Nazionale delle Ricerche, 1940.

\bibitem{Moulton1910}
F.R. Moulton.
\newblock The straight line solutions of the problem of $n$ bodies.
\newblock {\em Annals of Mathematics}, 12:1--17, 1910.

\bibitem{Neumaier1990book}
A.~Neumaier.
\newblock {\em Interval Methods for Systems of Equations}.
\newblock Cambridge University Press, 1990.

\bibitem{Roberts1999}
G.E. Roberts.
\newblock A continuum of relative equilibria in the five-body problem.
\newblock {\em Physica D: Nonlinear Phenomena}, 127(3:4):141--145, 1999.

\bibitem{Simo1978}
C.~Sim\'o.
\newblock Relative equilibrium solutions in the four body problem.
\newblock {\em Celestial Mechanics}, 18:165--184, 1978.

\bibitem{Tucker2011book}
W.~Tucker.
\newblock {\em Validated Numerics: A Short Introduction to Rigorous
  Computations}.
\newblock Princeton University Press, 2011.

\end{thebibliography}

\appendix

\section{Bounding the number of small masses}\label{app:small_masses}

The goal of this appendix is to prove Theorem~\ref{thm:small-masses-no-bif}.

From the discussion in Section~\ref{subsec:the_configuration_space} 
(see (\ref{eq:configuration_space})) we know that
that no relative equilibrium exist close to $p_3$, while for 
small $m_1$ or $m_2$ the relative equilibria can exists arbitrarily close to $p_1$ 
and $p_2$ (this is proved in this appendix). If we relax the requirement 
that $m_1 \leq m_2$, then it is enough to investigate the neighborhood of
$p_1$ and the same conclusions will be valid by symmetry close to $p_2$.

We will investigate the neighborhood of $p_1$  when $m_1 \in (0,M]$ and 
$0<m_2<1$ with  $M=10^{-2}$. 
This will be split later in two cases:
when the second mass $m_2$ is away from zero ($m_2 \geq M =10^{-2}$) and when $m_2 \in [0,M]$. 
The difference between these two cases is that for the former
we can control all (four) the solutions in a neighbourhood of $p_1$, while in the latter
two of the four can escape the neighbourhood (while the other two stay always in it). 
Technically this is related to the behaviour
of the eigenvalues of Hessian of the nonsingular part of $V$ at $p_1$. 
In the first case both eigenvalues are separated from $0$ and
the second case the smaller eigenvalue approaches zero when $m_1,m_2 \to 0$.

\subsection{The separation  of eigenvalues at $p_1$ for non-singular part of $V$}
\label{subsec:sep-eigenvalues}
Let
\begin{equation}
  \widetilde{V}(x,y)= V(p_1 + (x,y)) -  \frac{m_1}{\|z-p_1\|}=\frac{1}{2}D\tilde{V}(p_1)((x,y),(x,y)) + O(\|(x,y)\|^3). \label{eq:Vtilde}
\end{equation}

The following straightforward lemma will be used during the entire discussion.
\begin{lemma}
\label{lem:sep-eigenval}
The matrix $D\tilde{V}(p_1)$ is positive definite. Moreover, if $m_1 \leq 1/2$
(in our case it
is true because $m_3  \geq m_1$ and $m_3 + m_1 \leq 1$), then its
eigenvalues $\lambda_1 > \lambda_2$ are given by
\begin{eqnarray}
\lambda_{1,2}&=&\frac{1}{2} \left( 2+m_2+m_3 \pm 3 \sqrt{m_2^2-m_2 m_3+m_3^2}\right)=  \nonumber \\
  &=& \frac{1}{2} \left( 2+m_2+m_3 \pm 3 \sqrt{(m_2+m_3)^2-3m_2 m_3}\right),  \label{eq:eigenval}
\end{eqnarray}
and satisfy the following inequalities
\begin{eqnarray*}
 m_1 + \frac{9 m_2 m_3}{2(m_2+m_3)} &>& \lambda_2 > m_1 + \frac{9m_2 m_3}{4(m_2+m_3)},
 \nonumber \\
  \lambda_1 &<& 3, \nonumber \\
  \frac{3}{4}&\leq& \lambda_1 - \lambda_2 < 3. \nonumber
\end{eqnarray*}
\end{lemma}

The proof of this lemma is straightforward. It is important to notice that
if $m_1,m_2 \to 0$, then $\lambda_2 \to 0$, but if $m_2>\epsilon$
then $\lambda_2$ stays away from $0$. Also that  $\lambda_1-\lambda_2$
has a positive lower bound independent of the size of $m_1$ and $m_2$. Moreover,
with the help of the code \textbf{Bounds\_eigenvalues} and under the assumption
that $m_2, m_3 \geq 10^{-2}$ we can obtain sharper bounds:
$\lambda_1-\lambda_2<2.95643$, $1.62287 < \lambda_1 < 2.97843$,
$0.02167 < \lambda_2 < 0.87695$.
Also, if we allow $m_2, m_3\in[0,1]$ then we have $\lambda_1 \in[1,3]$ and
$\lambda_2\in[0,1]$.

\subsection{Estimates for higher order terms in $V$} \label{subsec:highoredeestm}
We shift the coordinate origin to $p_1$. Then $V(x,y)$ can be written as follows 
\begin{equation}
  V(x,y)=\frac{m_1}{r} + \tilde{V}(0,0) + D\tilde{V}(x,y) + h(x,y),  \label{eq:V-h}
\end{equation}
where $r=\|(x,y)\|$ and $h$ is analytic with respect to all variables and is $O(\|(x,y)\|^3)$
uniformly with respect to the masses. In fact $h$ depends linearly on masses $m_2$ and $m_3$. The term  $V(0,0)$
will be ignored.

For any $R >0$ there exist $C_1=C_1(m_2, m_3, x, y)$ and $C_2=C_2(m_2,m_3, x, y)$ such that
\begin{eqnarray}
   \left\|\nabla_{(x,y)} h(x,y)\right\|&<& C_1 \|(x,y)\|^2, \label{eq:vr-estm1} \\
   \left\|D_{(x,y)}^2 h(x,y)\right\| &<& C_2 \|(x,y)\|. \label{eq:vr-estm2}
\end{eqnarray}
for   $\|(x,y)\| \leq R$.

We obtain quantitative instances of the upper bounds \eqref{eq:vr-estm1} and \eqref{eq:vr-estm2}
as follows: we set $R=2$ (from the discussion in Section~\ref{subsec:sep-eigenvalues} 
it follows that for any $i=1,2,3$ all relative equilibria are contained in $\overline{B}(p_i,R)$.) 
By performing automatic differentiation of the function $h(x,y)$
for all $(x,y)$ with $\|(x,y)\|\leq 10^{-4}$, the first one is three times the sum of all
Taylor coefficients of order 3, and the second one is six times the sum of all Taylor
coefficients of order 3.
Hence,
using the code \textbf{Bounds\_h} and under the assumption of
$0\leq m_2, m_3\leq 1$, and $\|(x,y)\|\leq 10^{-3}$, we obtain the bounds
\begin{equation}
C_1\leq 9.68850, \qquad
C_2\leq 19.3770.  \label{eq:C1C2R=2}
\end{equation}

\subsubsection{Estimates in polar coordinates}
\label{subsec:estm-polar-coord}

Using the polar coordinates we  obtain the following expressions for partial derivatives of $h(r,\varphi)$
\begin{eqnarray*}
  \frac{\partial h}{\partial r}(r,\varphi)&=& \frac{\partial h}{\partial x}(r \cos \varphi,r\sin \varphi) \cos \varphi +  \frac{\partial h}{\partial y}(r \cos \varphi,r\sin \varphi) \sin \varphi,  \\
   \frac{\partial h}{\partial \varphi}(r,\varphi)&=&r \left(-\frac{\partial h}{\partial x} \sin \varphi + \frac{\partial h}{\partial y} \cos \varphi  \right), \\
   \frac{\partial^2 h}{\partial r^2}(r,\varphi)&=& \frac{\partial^2 h}{\partial x^2} \cos^2 \varphi + 2 \frac{\partial^2 h}{\partial x \partial y} \sin \varphi \cos \varphi + \frac{\partial^2 h}{\partial y^2} \sin^2 \varphi, \\
   \frac{\partial^2 h}{\partial r \partial \varphi}(r,\varphi)&=& \left(-\frac{\partial h}{\partial x} \sin \varphi + \frac{\partial h}{\partial y} \cos \varphi  \right) + \\
   & & r \left( \left(-\frac{\partial^2 h}{\partial x^2} + \frac{\partial^2 h}{\partial y^2}\right) \sin \varphi \cos \varphi + \frac{\partial^2 h}{\partial x \partial y} (\cos^2 \varphi - \sin^2 \varphi) \right), \\
   \frac{\partial^2 h}{\partial \varphi^2}(r,\varphi)&=& -r\left(\frac{\partial h}{\partial x} \cos \varphi + \frac{\partial h}{\partial y} \sin \varphi  \right) + \\
   & & r^2 \left( \frac{\partial^2 h}{\partial x^2} \sin^2 \varphi - 2 \frac{\partial^2 h }{\partial x \partial y}\sin \varphi \cos \varphi +
   \frac{\partial^2 h}{\partial y^2} \cos^2 \varphi\right).
\end{eqnarray*}
Let
\begin{equation*}
 e=(\cos \varphi,\sin \varphi), \quad e^\bot=(-\sin \varphi,\cos \varphi).
\end{equation*}
In a more geometric way we can express the above partial
derivatives as follows
\begin{eqnarray*}
  \frac{\partial h}{\partial r}(r,\varphi)&=& (\nabla h,e), \\
  \frac{\partial h}{\partial \varphi}(r,\varphi)&=& r(\nabla h,e^\bot), \\
  \frac{\partial^2 h}{\partial r^2}(r,\varphi)&=& D^2h(e,e), \\
   \frac{\partial^2 h}{\partial r \partial \varphi}(r,\varphi)&=& (\nabla h,e^\bot) + r D^2h(e^\bot,e), \\
  \frac{\partial^2 h}{ \partial \varphi^2}(r,\varphi)&=&-r (\nabla h,e) + r^2 D^2
  h(e^\bot,e^\bot).
\end{eqnarray*}

Therefore we have the following estimates for the partial derivatives of $h$ with respect to polar coordinates.
\begin{lemma}
\label{lem:vr-par-polar-estm}
Assume that (\ref{eq:vr-estm1},\ref{eq:vr-estm2}) hold for $r \leq R$. Then
the following estimates are satisfied for $r \leq R$
\begin{eqnarray*}
 \left| \frac{\partial h}{\partial r}(r,\varphi) \right|&<& C_1 r^2, \\
 \left|  \frac{\partial h}{\partial \varphi}(r,\varphi) \right|&<& C_1r^3, \\
  \left| \frac{\partial^2 h}{\partial r^2}(r,\varphi) \right|&<& C_2 r, \\
  \left|  \frac{\partial^2 h}{\partial r \partial \varphi}(r,\varphi) \right|&<& C_1 r^2 + C_2 r^2=(C_1+C_2)r^2, \\
  \left| \frac{\partial^2 h}{ \partial \varphi^2}(r,\varphi) \right|&<&  C_1 r^3 + C_2
  r^3=(C_1+C_2)r^3.
\end{eqnarray*}
\end{lemma}

\begin{lemma}
\label{lem:vr-par-polar-tilde-estm} The same  assumptions as in
Lemma~\ref{lem:vr-par-polar-estm}. Let us set
\begin{eqnarray*}
\tilde{h}_\varphi(r,\varphi)&=& \frac{1}{r^3} \frac{\partial h}{\partial \varphi}(r,\varphi), \\
\tilde{h}_{\varphi \varphi}(r,\varphi)&=& \frac{1}{r^3} \frac{\partial^2 h}{\partial \varphi^2}(r,\varphi), \\
\tilde{h}_{\varphi r}(r,\varphi)&=& \frac{1}{r^2} \frac{\partial^2 h}{\partial \varphi \partial r}(r,\varphi), \\
\tilde{h}_r(r,\varphi)&=& \frac{1}{r^2} \frac{\partial h}{\partial r}(r,\varphi), \\
  \tilde{h}_{rr}(r,\varphi)&=& \frac{1}{r} \frac{\partial^2 h}{\partial r^2}(r,\varphi).
\end{eqnarray*}
Then we have for $r \leq R$
\begin{eqnarray}
  |\tilde{h}_\varphi(r,\varphi)| < C_1, \quad   |\tilde{h}_{\varphi \varphi}(r,\varphi)| < C_1 + C_2, \quad |\tilde{h}_{\varphi r}(r,\varphi)| < C_1 + C_2 \\
   |\tilde{h}_r(r,\varphi)| < C_1, \quad  |\tilde{h}_{rr}(r,\varphi)| < C_2
\end{eqnarray}
\end{lemma}

\subsection{Our system in polar coordinates near $p_1$}
By an orthogonal  change of variables $V$ we can diagonalize $D\widetilde{V}(0)$, hence $V$ can be written as follows (compare (\ref{eq:V-h}))
\begin{equation}
  V(x,y)=\frac{m_1}{r} +  \lambda \frac{x^2}{2} + a \frac{ y^2}{2} + h(x,y). \label{eq:V-rad-sepm}
\end{equation}
Observe that $\lambda$ and $a$ depend on mass parameters (see Lemma~\ref{lem:sep-eigenval}) and the same holds for the coordinate change. Bounds
on $h$ and its derivatives uniform with respect to masses have been obtained for $r<R$ in previous subsections.

In polar coordinates our potential (\ref{eq:V-rad-sepm}) becomes (we set $m=m_1$)
\begin{equation}
  V(r,\varphi)= \frac{m}{r} + \frac{a r^2}{2} + \frac{(\lambda -a)r^2 \cos^2 \varphi}{2} + h(r,\varphi)\label{eq:Vreal-rad}
\end{equation}
and
\begin{eqnarray}
  \frac{\partial V}{\partial r}(r,\varphi)&=&-\frac{m}{r^2} + ar + (\lambda - a)r \cos^2 \varphi + \frac{\partial h}{\partial r}(r,\varphi) = \nonumber \\
  & =&  -\frac{m}{r^2} + r \left( \frac{\lambda + a}{2} +  \frac{\lambda - a}{2} \cos(2 \varphi)\right) + r^2 \tilde{h}_r,
     \label{eq:vreal-fr} \\
  \frac{\partial V}{\partial \varphi}(r,\varphi)&=&-(\lambda-a)r^2  \sin (2 \varphi) / 2 + \frac{\partial h}{\partial
  \varphi}(r,\varphi)=  \label{eq:vreal-fp}\\
  &=& -(\lambda-a)r^2  \sin (2 \varphi) / 2 +  r^3 \tilde{h}_\varphi(r,\varphi) = \nonumber \\
 &=& r^2 \left(  -(\lambda-a)  \sin (2 \varphi) / 2 +  r \tilde{h}_\varphi(r,\varphi)
 \right), \nonumber \\
 \frac{\partial^2 V}{\partial r^2}(r,\varphi)&=& \frac{2m}{r^3} +  \frac{\lambda + a}{2} +  \frac{\lambda - a}{2} \cos(2 \varphi) + \frac{\partial^2 h}{\partial r^2}(r,\varphi)\\
 \frac{\partial^2 V}{\partial \varphi^2}(r,\varphi)&=& -(\lambda -a)r^2 \cos (2 \varphi) + \frac{\partial^2 h}{\partial \varphi^2}(r,\varphi) = \nonumber \\
   &=&  r^2 \left(-(\lambda -a) \cos (2 \varphi) + r \tilde{h}_{\varphi \varphi}(r,\varphi)\right) \label{eq:vrdfdp},\\
  \frac{\partial^2 V}{\partial \varphi \partial r}(r,\varphi)&=&-(\lambda -a)r \sin(2\varphi) + \frac{\partial^2 h}{ \partial r \partial \varphi}. \label{eq:d2rfi}
\end{eqnarray}

We will study the system
\begin{eqnarray}
  \frac{\partial V}{\partial \varphi}(r,\varphi)&=&0 ,  \label{eq:dVdvarphi} \\
   \frac{\partial V}{\partial r}(r,\varphi)&=&0.
\end{eqnarray}
It turns out that it is relatively easy to solve (\ref{eq:dVdvarphi}), i.e. to find four curves $\varphi(r)$ such that all solutions
of (\ref{eq:dVdvarphi}) are of the form $(r,\varphi(r))$.  This is done in the next subsection for $0<m_2<1$ and $m_1$ small enough. 

Then we study the equation  $\frac{\partial V}{\partial r}(r,\varphi(r))=0$ on each of these curves. This is the place where we will split our
considerations into two cases: $m_2$ bounded from below and both $m_1,m_2 \to 0$. The second case is much more subtle.

\subsection{Solving $\frac{\partial V}{\partial \varphi}(r,\varphi)=0$ for $\varphi(r)$.}
\label{subsec:vreal-sol-varphi(r)}

Our strategy to study the solutions of 
$\frac{\partial V}{\partial \varphi}(r,\varphi(r))=0$ 
in the set $\{(r,\varphi): r \leq R, \varphi \in [0,2\pi] \}$
is: we try to find possible large intervals such that the solution
is excluded, for this it is enough to have (see (\ref{eq:vreal-fp}) and Lemma~\ref{lem:vr-par-polar-tilde-estm})
\begin{equation}
  (\lambda-a)  |\sin (2 \varphi)| / 2 \geq  R C_1, \label{eq:fphi-no-sol}
\end{equation}
and then on the complementary  intervals we want $\frac{\partial^2 V}{\partial \varphi^2}$ to be either positive or negative on
the whole interval, this is guaranteed by the following inequality
(see (\ref{eq:vrdfdp}))  and Lemma~\ref{lem:vr-par-polar-tilde-estm})
\begin{equation}
  (\lambda - a) |\cos (2\varphi)| \geq R (C_1 + C_2).  \label{eq:fpphiphi-pos}
\end{equation}
\begin{rem}
Recall that all this computations must have $R\leq 2$ since this is the range where 
the existence of the constants $C_1, C_2$ have been proved.
\end{rem}

To deal with (\ref{eq:fphi-no-sol}) let us set
\begin{equation}
  \alpha =  \arcsin \left(\frac{2 R C_1}{\lambda - a} \right) \in (0,\pi/2).  \label{eq:vrel-sol-phi-def-alpha}
\end{equation}
For this to make sense we must have that
\begin{equation}
  \frac{2 R C_1}{\lambda - a} < 1,  \label{eq:fphi-cc1}
\end{equation}
which in our case ($C_1=9.68850$, see \eqref{eq:C1C2R=2}, and $\lambda-a > \frac{3}{4}$) 
it is satisfied when $R< 0.03952$.

We define the following sets (intervals) in $\varphi$
\begin{eqnarray}\label{eq:sectors}
   J^+_0&=& \left[-\frac{\alpha}{2},\frac{\alpha}{2}\right], \quad  J^-_0= \left[\frac{\pi - \alpha}{2},\frac{\pi+\alpha}{2}\right], \\
   J^+_1&=& \left[\frac{2\pi - \alpha}{2},\frac{2\pi + \alpha}{2}\right], \quad   J^-_1= \left[\frac{3\pi + \alpha}{2},\frac{4\pi-\alpha}{2}\right], \\
   N_0&=& \left[\frac{\alpha}{2},\frac{\pi - \alpha}{2}\right], \quad N_1= \left[\frac{\pi + \alpha}{2},\frac{2\pi-\alpha}{2}\right], \\
    N_2&=& \left[\frac{2\pi+\alpha}{2},\frac{3\pi -\alpha}{2}\right], \quad N_3= \left[\frac{3\pi+\alpha}{2},\frac{4\pi-\alpha}{2}\right].
\end{eqnarray}
These sectors are positioned as follows as we move in the direction of increasing $\varphi$: $J^+_0$, $N_0$, $J^-_0$, $N_1$, $J^+_1$,
 $N_2$, $J^-_1$, $N_3$, $J^+_0$, \dots.   The meaning of $\pm$ in $J$ symbol is the sign of $\cos(2 \varphi)$.

\begin{lemma}
\label{lem:vr-fp-int-estm}
Assume that assumptions of Lemma~\ref{lem:vr-par-polar-estm} are satisfied.
Assume that
\begin{equation}
 R^2 \left(4C_1^2 + (C_1 + C_2)^2 \right) < (\lambda - a)^2 \label{eq:main-rad-cond}
\end{equation}

Then  the equation
 \begin{equation}
 \frac{\partial V}{\partial \varphi}(r,\varphi(r))=0 \label{eq:dvdf=0}
 \end{equation}
 has a unique solution for $r \leq R$ in each of the 
intervals $J_{0,1}^\pm$. These are all solutions
 of (\ref{eq:dvdf=0}) with $r \leq R$.

 The solutions curves of (\ref{eq:dvdf=0}) satisfy
 \begin{equation}
   \varphi(r)= \{0,\pi/2,\pi,3\pi/2\} + \delta(r), \quad \delta(0)=0.
 \end{equation}
 where $\delta(r)$ is a different function for each branch and satisfies the following estimate
 \begin{equation}
   |\delta(r)| \leq \arcsin \left(\frac{2 r C_1}{\lambda - a} \right) 
\leq \frac{2C_1 r}{\lambda-a} + \left(\frac{2C_1}{\lambda-a}\right)^3 r^3/6 + O(r^4). 
\label{eq:dvdf-delta-estm}
\end{equation}
In particular, 
by assuming $C_1=9.68850$, $C_2=19.3770$ \textbf{(see (\ref{eq:C1C2R=2}))}, $m_1\leq 10^{-2}$, 
$m_2\in[0,1]$ and that
$\lambda-a>\frac{3}{4}$ we obtain that
$R$ must be less than $0.02193$ and that
$|\delta(r)|\leq 0.0259 $ for all $|r|\leq 10^{-3}$.
\end{lemma}
\textbf{Proof:}
Observe first that (\ref{eq:main-rad-cond}) implies condition (\ref{eq:fphi-cc1}) and hence each of the intervals  $J^+_0$, $N_0$, $J^-_0$, $N_1$, $J^+_1$,
 $N_2$, $J^-_1$, $N_3$ is nonempty and  we will show that the following conditions  are satisfied
\begin{eqnarray}
  \frac{\partial V}{\partial \varphi}(r,\varphi) &>&0 , \quad \mbox{for $r \leq R,\  \varphi \in N_1 \cup N_3$},  \\
  \frac{\partial V}{\partial \varphi}(r,\varphi) &<&0 , \quad \mbox{for $r \leq R,\  \varphi \in N_0 \cup N_2$}.
\end{eqnarray}
and
 \begin{eqnarray}
  \frac{\partial^2 V}{\partial \varphi^2} (r,\varphi) <0, \quad \mbox{for $\varphi \in J^+_0 \cup J^+_1$},  \label{eq:fpp-neg} \\
  \frac{\partial^2 V}{\partial \varphi^2} (r,\varphi) >0, \quad \mbox{for $\varphi \in J^-_0 \cup J^-_1$}. \label{eq:fpp-pos}
 \end{eqnarray}

To establish (\ref{eq:fpp-neg},\ref{eq:fpp-pos}) we  need condition (\ref{eq:fpphiphi-pos}) to hold for $\varphi \in J^+_0 \cup J^+_1 \cup J^-_0 \cup J^-_1 $.

It is easy to see that it will hold iff
\begin{equation}
 (\lambda - a) |\cos (2 \varphi)| \geq  (\lambda - a) |\cos (\alpha)| \geq R (C_1 + C_2). \label{eq:fphi-cc2}
\end{equation}
From (\ref{eq:vrel-sol-phi-def-alpha}) we know the value of  $\sin(\alpha)$, therefore (\ref{eq:fphi-cc2}) is equivalent to the following
chain of inequalities
\begin{eqnarray*}
  \cos^2(\alpha) &\geq&  \frac{R^2 (C_1 + C_2)^2}{(\lambda-a)^2} \\
  1 - \sin^2(\alpha) &\geq & \frac{R^2 (C_1 + C_2)^2}{(\lambda-a)^2} \\
  1- \frac{4R^2C_1^2}{(\lambda-a)^2}  &\geq & \frac{R^2 (C_1 + C_2)^2}{(\lambda-a)^2} \\
  (\lambda-a)^2 &\geq& 4R^2C_1^2 + R^2 (C_1 + C_2)^2 = R^2 \left(4C_1^2 + (C_1 + C_2)^2 \right).
\end{eqnarray*}
This is condition (\ref{eq:main-rad-cond}). Monotonicity in $J_{0,1}^\pm$ and opposite signs on the end points give us the existence of one branch of $\varphi(r)$ in each sector.

In fact in the above reasoning we can use any $r \leq R$ to define $\alpha=\alpha(r)=\arcsin\left(\frac{2r C_1}{\lambda-a}\right)$ and define the sectors
$J_{0,1}^\pm$ using this $\alpha$. In this way we obtain that
\begin{equation*}
  |\delta(r)| \leq \arcsin \left(\frac{2rC_1}{\lambda-a}\right).
\end{equation*}
From this and the series expansion of $\arcsin x$ we obtain  estimates of $\delta(r)$.
\qed

Now we estimate the derivative of $\varphi'(r)$.
\begin{lemma}
\label{lem:osm-der-varphi(r)}
The same assumptions as in Lemma~\ref{lem:vr-fp-int-estm}.
Then there exists $R_1 = 10^{-3}$, such that for all
$r\leq R_1$ and all branches of $\varphi(r)$ holds
\begin{eqnarray}
  |\varphi'(r)| \leq   \frac{3 C_1+C_2}{(\lambda -a) 
\sqrt{1-\left(\frac{2C_1r}{\lambda-a}\right)^2} 
-(C_1+C_2)r}
  \leq 67.2174.
\end{eqnarray}

\end{lemma}
\textbf{Proof:}
From (\ref{eq:vrdfdp}), (\ref{eq:d2rfi}) and Lemma~\ref{lem:vr-par-polar-estm} it follows that
\begin{eqnarray*}
  |\varphi'(r)|&=&\left|\frac{\frac{\partial^2 V}{\partial \varphi\partial r}}{\frac{\partial^2 V}{\partial \varphi^2} } \right|=
                \left|\frac{-(\lambda -a)r \sin(2\varphi) + \frac{\partial^2 h}{ \partial r \partial \varphi}}{-(\lambda -a)r^2 \cos (2 \varphi) + \frac{\partial^2 h}{\partial \varphi^2}(r,\varphi)}\right| \leq \\
               &\leq& \frac{(\lambda -a)r |\sin(2\varphi)| + (C_1+C_2)r^2}{(\lambda -a)r^2 |\cos (2 \varphi)| -(C_1+C_2)r^3}.
\end{eqnarray*}

Now, using the fact that $\left|\sin(2\varphi)\right|\leq \frac{2 C_1 R_1}{\lambda-a}$ and
$|\cos(2\varphi)|=\sqrt{1-\sin(2\varphi)^2}$
we obtain the desired inequality. Finally, by plugging in the same 
constants as in Lemma~\ref{lem:vr-fp-int-estm} we obtain the numerical upper bound.

\qed

\subsection{The case of $m_1\to 0$ with $m_2$ bounded from below. }

Here we treat the case
$0< m_1 \leq M$ and $(x,y) \in \overline{B}(0,R)$ (we will see below that 
it suffices $M=10^{-2}$ and $R=10^{-3}$)
in the configuration space with the goal to obtain
$R>0$ independent of $m_1 \in (0,M]$, where we know that there
is at most four solutions (central configurations) and all of them are non-degenerate.

We work with the representation of potential $V$ given by (\ref{eq:Vreal-rad}). In that setting 
\begin{equation*}
\lambda=\lambda_1 > a=\lambda_2
\end{equation*} 
where $\lambda_1$, $\lambda_2$ are as in Lemma~\ref{lem:sep-eigenval} and for $m_2 \geq 10^{-2}$ 
we have the bounds
\begin{equation}
  \lambda > 1.62, \qquad a > 0.02167.  
\end{equation}

\subsubsection{Solving $\frac{\partial V}{\partial r}(r,\varphi(r))=0$}
\label{subsec:osm-sol}

On curves $\varphi(r)$ in $J_{0,1}^+$ (i.e. $\cos(2\varphi)>0$) we have from
(\ref{eq:vreal-fr}) and Lemma~\ref{lem:vr-par-polar-tilde-estm}
\begin{eqnarray}
  \frac{\partial V}{\partial r}(r,\varphi(r)) &=& - \frac{m}{r^2} +
  r \left( \frac{\lambda+a}{2} +
  \frac{\lambda-a}{2}\cos(2\varphi(0)+2\delta(r)) \right) + r^2 \tilde{h}_r \label{eq:fr-on-pos-curve} \\
  &=& - \frac{m}{r^2} + \lambda r  - \frac{\lambda-a}{2} S(r) r^3 +
  r^2 \tilde{h}_r \nonumber = - \frac{m}{r^2} + \lambda r  + r^2 L(r),    \nonumber
\end{eqnarray}
with $S(r)=2 A(r) (\delta(r))^2$, where $A(r)\in[-1,1]$. (All these come from
expanding $\cos$ around $\varphi(0)$ up to order 2.)
Notice that
\begin{equation}
\label{eq: bound_L}
|L(r)|\leq\left|\frac{\lambda-a}2\right|2R
\left|\arcsin\left(\frac{2rC_1}{\lambda-a}\right)\right|^2+C_1,
\end{equation}
with $r\leq R$.

With the same assumptions and constants as the ones from Lemma~\ref{lem:vr-fp-int-estm} 
we obtain the numerical upper bound ${|L(r)|\leq 9.68851}$. So, 
\[
\frac{\partial V}{\partial r}(r,\varphi(r)) \in
- \frac{m}{r^2} + \lambda r+[-9.68851, 9.68851]r^2, \text{ for } R\leq 10^{-3}.
\]

For the derivative we obtain (we use (\ref{eq:vrdfdp}),(\ref{eq:d2rfi}),
Lemmas~\ref{lem:vr-par-polar-estm},~\ref{lem:vr-fp-int-estm} and~\ref{lem:osm-der-varphi(r)})
\begin{eqnarray}
  \frac{d}{dr}\left(\frac{\partial V}{\partial r}(r,\varphi(r))\right)
  &=& \frac{\partial^2 V}{\partial r^2} + \frac{\partial^2 V}{\partial r \partial \varphi}
  \varphi'(r) = \nonumber\\
  &=& \frac{2m}{r^3} +  \frac{\lambda + a}{2} +  \frac{\lambda - a}{2}
  \cos(2 \varphi) + \frac{\partial^2 h}{\partial r^2}(r,\varphi) + \nonumber \\
  &+&  \left(-(\lambda -a)r \sin(2\varphi) + \frac{\partial^2 h}{ \partial r \partial \varphi}
  \right) \varphi'(r) = \nonumber \\
  &=& \frac{2m}{r^3} + \lambda - \frac{\lambda-a}{2} S(r)r^2+\tilde{h}_{rr}r+\nonumber\\
  &+& r^2\varphi'(r)(\tilde{h}_{\varphi r}-r(\lambda-a)A(r)\delta(r))= \nonumber \\
  &=& \frac{2m}{r^3} + \lambda +r(-\frac{\lambda-a}{2} S(r)r+\tilde{h}_{rr}+\nonumber\\
  &+& r\varphi'(r)(\tilde{h}_{\varphi r}-r(\lambda-a)A(r)\delta(r)))= \nonumber \\
  &=&  \frac{2m}{r^3}  + \lambda  + rT(r). \label{eq:drfr-on-pos-curve}
\end{eqnarray}
with $A(r)\in[-1,1]$. So we have 
\begin{equation}
\label{eq:bound_T}
\left|T(r)\right|\leq \left|\frac{\lambda-a}{2}\right|R
\left|\delta(r)\right|+C_2
+R\left|\varphi'(r)\right|
(C_1+C_2+R(\lambda-a)\left|\delta(r)\right|)
\end{equation}
for $r\leq R$.

With the same assumptions and constants as the ones from Lemma~\ref{lem:vr-fp-int-estm} 
we obtain the numerical upper bound ${|T(r)|\leq 21.33076}$. So, 
\[
\frac{d}{dr}\left(\frac{\partial V}{\partial r}(r,\varphi(r))\right)\in
\frac{2m}{r^3} + \lambda +[-21.33076, 21.33076]r, \text{ for } r\leq 10^{-3}.
\]
Notice that this last interval expression is always positive because 
$\lambda>21.33076\cdot 10^{-3}$.

Similarly as before, on curves in $J_{0,1}^-$ (i.e. $\cos(2\varphi)<0$) we have from
(\ref{eq:vreal-fr}) and Lemma~\ref{lem:vr-par-polar-tilde-estm}
\begin{eqnarray}
  \frac{\partial V}{\partial r}(r,\varphi(r)) &=& - \frac{m}{r^2} +
  r \left( \frac{\lambda+a}{2} +
  \frac{\lambda-a}{2}\cos(2\varphi(0)+2\delta(r))
  \right) + r^2 \tilde{h}_r \label{eq:fr-on-neg-curve} \\
  &=& - \frac{m}{r^2} + a r  + \frac{\lambda-a}{2} S(r) r^3 +
  r^2 \tilde{h}_r \nonumber = - \frac{m}{r^2} + a r  + r^2 L(r),    \nonumber
\end{eqnarray}
with $S(r)=2 A(r) (\delta(r))^2$, where $A(r)\in[-1,1]$. (All these come from
expanding $\cos$ around $\varphi(0)$ up to order 2.)
Hence, the bound in \ref{eq: bound_L} is the same as in here.

With the same assumptions and constants as the ones from Lemma~\ref{lem:vr-fp-int-estm} 
we obtain the numerical upper bound ${|L(r)|\leq 9.68851}$. So, 
\[
\frac{\partial V}{\partial r}(r,\varphi(r)) \in
- \frac{m}{r^2} + a r+[-9.68851, 9.68851]r^2, \text{ for } r\leq 10^{-3}.
\]

For the derivative we obtain (see the derivation of (\ref{eq:drfr-on-pos-curve}))
\begin{eqnarray}
  \frac{d}{dr}\left(\frac{\partial V}{\partial r}(r,\varphi(r))\right)
  &=& \frac{\partial^2 V}{\partial r^2} + \frac{\partial^2 V}{\partial r \partial \varphi}
  \varphi'(r) = \nonumber\\
  &=& \frac{2m}{r^3} +  \frac{\lambda + a}{2} +  \frac{\lambda - a}{2}
  \cos(2 \varphi) + \frac{\partial^2 h}{\partial r^2}(r,\varphi) + \nonumber \\
  &+&  \left(-(\lambda -a)r \sin(2\varphi) + \frac{\partial^2 h}{ \partial r \partial \varphi}
  \right) \varphi'(r) = \nonumber \\
  &=& \frac{2m}{r^3} + a + \frac{\lambda-a}{2} S(r)r^2+\tilde{h}_{rr}r+\nonumber\\
  &+& r^2\varphi'(r)(\tilde{h}_{\varphi r}+r(\lambda-a)A(r)\delta(r))= \nonumber \\
  &=& \frac{2m}{r^3} + a + r(\frac{\lambda-a}{2} S(r)r+\tilde{h}_{rr}+\nonumber\\
  &+& r\varphi'(r)(\tilde{h}_{\varphi r}+r(\lambda-a)A(r)\delta(r)))= \nonumber \\
  &=&  \frac{2m}{r^3} + a + rT(r). \label{eq:drfr-on-neg-curve}
\end{eqnarray}
with $A(r)\in[-1,1]$. 
Hence, the same bound \ref{eq:bound_T} for $T$ works here.

With the same assumptions and constants as the ones from Lemma~\ref{lem:vr-fp-int-estm} 
we obtain the numerical upper bound ${|T(r)|\leq 21.33076}$. So, 
\[
\frac{d}{dr}\left(\frac{\partial V}{\partial r}(r,\varphi(r))\right)\in
\frac{2m}{r^3} + a +[-21.33076, 21.33076]r, \text{ for } r\leq 10^{-3}.
\]
In particular, since $a>0.02167$ we obtain that the interval expression above is positive 
since ${a>21.33076\cdot 10^{-3}}$.

\subsubsection{The solutions are non-degenerate}
\label{subsubsec:non-deg-first-case}

We have proven above that
\[ 
\frac{d}{dr}\left(\frac{\partial V}{\partial r}(r,\varphi(r))\right)
\]
does not vanish in the range $m_1 \in (0,10^{-2}]$, $m_2\geq [10^{-2},1)$ 
and $r\leq 10^{-3}$. This suffices to see that any critical point of $V$ is 
non-degenerate. In fact, 
this is equivalent to Hessian is full rank, since 
\begin{equation*}
  0<\frac{d}{dr}\left(\frac{\partial V}{\partial r} (r, \varphi(r)) \right)= 
\frac{\partial^2 V}{\partial r^2} +
\frac{\partial^2 V}{\partial r \partial \varphi}\varphi'(r)= 
\left(\frac{\partial^2 V}{\partial \varphi^2}\right)^{-1} \det D^2V (r,\varphi(r)).
\end{equation*}
Hence $\det D^2V (r,\varphi(r)) \neq 0$. 

\subsection{The case of $m_1, m_2\to 0$.}
\label{subsec:m1m1->0}

In this section we are interested in studying the critical points in a neighbourhood of $p_1$
and with masses $m_1, m_2\leq M$ ($M$ to be stated below, it is $M=10^{-2}$). For doing so,
we will use polar coordinates $(r, \varphi)$ centered at $p_1$. Hence, we will study the system of equations
\begin{eqnarray}
 \frac{\partial V}{\partial \varphi}(r,\varphi)=0,\label{eq: partial_V_partial_phi}\\
 \frac{\partial V}{\partial r}(r,\varphi)=0.\nonumber
\end{eqnarray}

In subsection~\ref{subsec:vreal-sol-varphi(r)} 
the equation  $\frac{\partial V}{\partial \varphi}(r,\varphi)=0$ 
for $\varphi$ was studied obtaining
four curves $(r,\varphi(r))$ on which will have uniform estimates over the 
whole range of $(m_1,m_2)$ including $(0,0)$. 
These bounds were produced in a coordinate system in which $D^2\tilde{V}(p_1)$ is diagonal.   

This time we work in another coordinate system
 (this choice simplifies the computations)
\begin{eqnarray}
  p_1=\left(1,0\right),
    \quad p_2=\left(\frac12, \frac{\sqrt 3}2\right), \quad
  p_3=\left(0, 0\right).
\end{eqnarray}
With this choice of axes we obtain that the Hessian matrix of $D^2\tilde{V}(p_1)$ in $(r,\varphi)$-coordinates has the following form for $m_1=m_2=0$
\begin{equation}
  D^2 \tilde{V}(p_1)=\left[\begin{array}{cc}
  \lambda>0 & 0 \\
  0 & 0  \end{array}\right]
\end{equation}
The second eigenvalue $a=\lambda_2=0$ is the reason why we cannot apply tools from the previous subsection. 
This is the cause that while studying the equation $\frac{\partial V}{\partial r}(r,\varphi(r))=0$ we will encounter that the number of solutions
is more subtle and, in some cases, it will depend on the ratio $\frac{m_1}{m_2}$.

\subsubsection{Some useful formulae}
\label{sec:two-small-masses}

In polar coordinates we have
\begin{eqnarray*}
  z&=&(1 + r \cos \varphi,r \sin \varphi), \\ \|z-c\|^2 &=& \left((1 - (m_1 +
  m_2/2))^2 + \frac{3 m_2^2}{4} \right) + r^2 + 2 r\cos \varphi + \\ & &m_1
  \left( -2 r \cos \varphi  \right) + m_2 \left( - r \cos \varphi - \sqrt{3} r
  \sin \varphi \right)
\end{eqnarray*}
Since we are interested in the stationary solutions of $\nabla V(r,\varphi)$,
in the following computations
we will drop the terms in $\|z-c\|^2$ which do not depend of
$(r,\varphi)$. Hence, the important part of $\|z-c\|^2$ is
\begin{eqnarray*}
r^2 + 2 r\cos \varphi + m_1 \left( -2 r \cos \varphi  \right) +
m_2 \left( - 2 r \cos (\varphi-\pi/3) \right).
\end{eqnarray*}

Let us set
\begin{eqnarray*}
  r_3&=& \|z - p_3\| = \left( 1+ 2r \cos \varphi + r^2 \right)^{1/2}, \\ r_2&=&
  \|z - p_2\| = \left( 1+ r \cos \varphi - r \sqrt{3} \sin \varphi + r^2
  \right)^{1/2} \\ &=&  \left( 1+ 2r \cos (\varphi+\pi/3) + r^2 \right)^{1/2}.
\end{eqnarray*}

Using $m_3=1-m_1-m_2$
we separate the potential $V$ into several parts as follows
\begin{equation}
V(r,\varphi;m_1,m_2)= V_0(r,\varphi) + m_1 V_1(r,\varphi) + m_2 V_2(r,\varphi) +
\frac{m_1}{r},  \label{eq:V}
\end{equation}
where
\begin{eqnarray}
  V_0(r,\varphi)&=&  \frac{r^2}{2} +  r\cos \varphi + \frac{1}{r_3},
  \label{eq:V0} \\ V_1(r,\varphi)&=& -r \cos \varphi   - \frac{1}{r_3},
  \label{eq:V1} \\ V_2(r,\varphi)&=& -r \cos (\varphi-\pi/3) + \frac{1}{r_2} -
  \frac{1}{r_3}   \label{eq:V2} \\ &=&- \frac{r}{2} \cos \varphi -
  \frac{\sqrt{3}}{2} r \sin \varphi + \frac{1}{r_2} - \frac{1}{r_3}. \nonumber
\end{eqnarray}

 It turns out  that  the point $p_1$ is
a critical point for the potential
$$\tilde{V}(r,\varphi;m_1,m_2)= V_0(r,\varphi) +
m_1 V_1(r,\varphi) + m_2 V_2(r,\varphi)$$ for any $m_1,m_2$, hence
obtaining that ($z$ here denotes cartesian coordinates)
\begin{equation}
  \nabla V_i (p_1+z) = O(|z|), \quad  D^2 V_i (p_1+z) = O(1), \qquad i=0,1,2.
  \label{eq:grad-Vi}
\end{equation}

\subsubsection{More useful formulae}

\begin{eqnarray}
  r_3(r,\varphi)&=&1+r \cos (\varphi)+\frac{1}{2} r^2 \sin
  ^2(\varphi)-\frac{1}{2} r^3 \left(\sin ^2(\varphi) \cos
  (\varphi)\right)+O\left(r^4\right), \nonumber \\
  1-\frac{1}{r_3^3(r,\varphi)}&=& 3 r \cos (\varphi)+r^2 \left(\frac{3 \sin
  ^2(\varphi)}{2}-6 \cos ^2(\varphi)\right)+ \nonumber \\ &+& r^3 \left(10 \cos
  ^3(\varphi)-\frac{15}{2} \sin ^2(\varphi) \cos
  (\varphi)\right)+O\left(r^4\right). \label{eq:r3-3exp}
\end{eqnarray}

To obtain a formula for $r_2(r,\varphi)$ or $1-\frac{1}{r_2^3}$ it is enough
to do the substitution $\varphi \to \varphi + \pi/3$ in the above expressions.

 From (\ref{eq:V0}) we obtain
\begin{eqnarray}
  \frac{\partial V_0}{\partial \varphi}&=& -r \sin \varphi - \frac{r \sin
  \varphi}{r_3^3}= -r \sin \varphi  \left(1 - \frac{1}{r_3^3} \right),
  \label{eq:dV0dfi} \\ \frac{\partial V_0}{\partial r}&=& r + \cos \varphi -
  \frac{r+\cos \varphi}{r_3^3}= (r + \cos \varphi) \left(1 - \frac{1}{r_3^3}
  \right), \label{eq:dV0dr}
\end{eqnarray}

From (\ref{eq:V1}) we have
\begin{eqnarray}
  \frac{\partial V_1}{\partial \varphi}&=& -\frac{\partial V_0}{\partial
  \varphi}= r \sin \varphi \left( 1 - \frac{1}{r_3^3}\right), \label{eq:dV1dfi}
  \\ \frac{\partial V_1}{\partial r}&=& - \cos \varphi + \frac{r+ \cos
  \varphi}{r_3^3}. \label{eq:dV1dr}
\end{eqnarray}

From (\ref{eq:V2}) we have
\begin{eqnarray*}
   \frac{\partial V_2}{\partial \varphi}&=&   \frac{r}{2} \sin \varphi -
   \frac{\sqrt{3}}{2} r \cos \varphi + \frac{r \sin \varphi + r \sqrt{3} \cos
   \varphi}{2 r^3_2} - \frac{r \sin \varphi}{r^3_3}= \\ &=& r \sin \varphi
   \left(1-\frac{1}{r_3^3} \right) - \frac{r}{2}\sin
   \varphi\left(1-\frac{1}{r_2^3} \right)- \frac{\sqrt{3} r}{2}\cos
   \varphi\left(1-\frac{1}{r_2^3} \right) = \\ &=& r \sin \varphi
   \left(1-\frac{1}{r_3^3} \right)  - r \sin(\varphi + \pi/3)
   \left(1-\frac{1}{r_2^3} \right), \\
    \frac{\partial V_2}{\partial r}&=&\frac{1}{2} \left(-\sqrt{3} \sin (\varphi)-\cos (\varphi)\right) +
   \frac{ \cos (\varphi)+r}{r_3^3} \\
   & & +\frac{\sqrt{3} \sin (\varphi)-\cos
   (\varphi)-2 r}{2 r_2^3} = \\
   &=& -\frac{\sqrt{3}}{2}\left(1 -
   \frac{1}{r_2^3} \right) \sin \varphi - \frac{\cos \varphi}{2} \left(1 +
   \frac{1}{r_2^3} \right) - \frac{r}{r_2^3}+  \frac{ \cos (\varphi)+r}{r_3^3}
\end{eqnarray*}

A nicer and better organized expression for $ \frac{\partial V_2}{\partial
\varphi}$ is
\begin{eqnarray}
    \frac{\partial V_2}{\partial \varphi}(r,\varphi)&=& -  \frac{\partial
    V_0}{\partial \varphi}(r,\varphi) + \frac{\partial V_0}{\partial
    \varphi}(r,\varphi+\pi/3).  \label{eq:dV2dfi-org}
\end{eqnarray}

We will also need second derivatives. From (\ref{eq:dV0dfi},\ref{eq:dV0dr}) we
obtain
\begin{eqnarray}
  \frac{\partial^2 V_0}{\partial \varphi^2}&=&- r \cos \varphi
  \left(1-\frac{1}{r_3^3} \right) + \frac{3 r^2 \sin^2 \varphi}{r_3^5}, \\
  \frac{\partial^2 V_0}{\partial r \partial \varphi}&=& - \sin \varphi \left(1
  - \frac{1}{r_3^{3}} \right) - r \sin \varphi \frac{\cos \varphi + r}{r_3^5}.
  \label{eq:d2V0drdfi}
\end{eqnarray}

Third derivatives:

\begin{eqnarray*}
\frac{\partial^3 V_0}{\partial r\partial^2\varphi} &=&
-\cos\varphi +\cos\varphi r_3^{-3}
+\left(3r\sin^2\varphi-r\cos\varphi(\cos\varphi+r)+r\sin^2\varphi\right)r_3^{-5} \\
&+& \left(5r^2\sin^2\varphi(\cos\varphi+r)\right)r_3^{-7}
\end{eqnarray*}

For the future use observe that we can factor $r^2$ from $\frac{\partial V_0}{\partial \varphi}$ as follows (see (\ref{eq:r3-3exp}))
\begin{eqnarray}
 \nonumber    -\frac{1}{r^2}\frac{\partial V_0}{\partial \varphi}&=&3 \cos \varphi \sin
    \varphi + \left(-6 \cos^2\varphi \sin\varphi + (3/2) \sin^3 \varphi
    \right)r + \sin \varphi O(r^2)= \\ &=& 3 \cos \varphi \sin \varphi +
    \left(-\frac{15}{2} \cos^2\varphi  + \frac{3}{2} \right)r \sin \varphi +
    \sin \varphi O(r^2). \label{eq:V_0dfi/r^2}
\end{eqnarray}

\subsubsection{Solving $\frac{\partial V}{\partial \varphi}(r,\varphi)=0$ for $\varphi(r)$. }
\label{sec:vreal-sol-varphi(r)}

As we will see in the next subsections, we can prove the existence of 
four continuous curves $\varphi(r)$ to 
the equation $\frac{\partial V}{\partial \varphi}(r,\varphi)=0$, each of these 
curves satisfying $\varphi(r)=\left\{0, \frac{\pi}{2}, \pi, \frac{3\pi}2\right\}+O(r)$. 
This is very similar to the result in lemma~\ref{lem:vr-fp-int-estm}. However, 
as we will see in the next subsections, we get this differently: it is enough 
to study this problem for  $\frac{\partial V_0}{\partial \varphi}(r,\varphi)=0$
since the other terms are perturbative in terms of the two small masses $m_1, m_2$.

\subsubsection{Equation $\frac{\partial V_0}{\partial \varphi}=0$}
\label{subsec:dv0dfi=0}

From (\ref{eq:dV0dfi}) we see that the solution of $\frac{\partial
V_0(r,\varphi)}{\partial \varphi}=0$ is given by $\sin \varphi =0$ which is
$\varphi=0$ or $\varphi=\pi$ (these are the solutions "collinear" with the
large body at $p_3$) or by
\begin{equation}
  1-\frac{1}{r_3^3}=0,
\end{equation}
which is equivalent to (we drop the solution $r=0$)
\begin{equation}
  \cos \varphi(r)=-\frac{r}{2}.  \label{eq:cosvarphir2}
\end{equation}
There are two branches of solutions of (\ref{eq:cosvarphir2}) , denoted by
$\varphi_0^\pm(r)$.  The series expansion for the first branch is
\begin{equation}
  \varphi_0^+(r)=\frac{\pi}{2} + \frac{r}{2} + \frac{r^3}{48} + O(r^4)
\end{equation}
and for the other branch
\begin{equation}
  \varphi_0^-(r)=\frac{3\pi}{2} - \frac{r}{2} -   \frac{r^3}{48} + O(r^4).
\end{equation}

Actually, we have the more accurate result.
\begin{lemma}
\label{rem:bounds-phi0}
From code \textbf{Bound\_1} we obtain for $r \in [0,R]$, $R=10^{-1}$, that
\begin{equation}
 \varphi_0^+(r)\in \frac{\pi}{2} + \frac{r}{2} + \frac{r^3}{48}
 + [0.,  0.0011841572]r^4
\end{equation}
\begin{equation}
  \varphi_0^-(r)\in \frac{3\pi}{2} - \frac{r}{2} -\frac{r^3}{48}
 - [0.,  0.0011841572]r^4
\end{equation}
\begin{equation}
 \frac{\partial \varphi_0^+}{\partial r}(r)
 \in \frac{1}{2} + \frac{r^2}{16} +
 [ 0.,  0.04366287]r^3
\end{equation}
\begin{equation}
 \frac{\partial \varphi_0^-}{\partial r}(r) \in -\frac{1}{2} - \frac{r^2}{16} -
 [ 0.,  0.04366287]r^3
\end{equation}
\end{lemma}
Observe that  $\varphi_0^+(r)-\varphi_0^+(0)=-(\varphi_0^-(r)-\varphi_0^-(0))$.
This is implied by   $(r,\varphi^\pm(r))$ being  just a  parametrisation of the circle $r_3=1$.

Since  $r_3=1$ on $(r,\varphi_0^\pm)$   from (\ref{eq:dV0dr}) we obtain
\begin{equation}
   \frac{\partial V_0}{\partial r}(r,\varphi_0^\pm(r))= 0. \label{eq:dV0dr0zero-phipm0}
\end{equation}

\subsubsection{Analysis of $\frac{\partial V}{\partial \varphi}=0$}

In this subsection we study the full problem of $\frac{\partial V}{\partial \varphi}=0$
treating it as the perturbation of the 
curves solving equation $\frac{\partial V_0}{\partial \varphi}=0$ 
considered in Section~\ref{subsec:dv0dfi=0}.

Observe that from (\ref{eq:V},\ref{eq:dV1dfi},\ref{eq:dV2dfi-org}) it follows that
\begin{eqnarray}
  \frac{\partial V}{\partial \varphi}(r,\varphi)&=&
  \frac{\partial V_0}{\partial \varphi}(r,\varphi)(1 - m_1 - m_2)+  m_2 \frac{\partial V_0}{\partial \varphi}(r,\varphi+\pi/3) = \nonumber \\
  & &  m_3 \frac{\partial V_0}{\partial \varphi}(r,\varphi) +
  m_2 \frac{\partial V_0}{\partial \varphi}(r,\varphi+\pi/3).   \label{eq:dVdfi-org}
\end{eqnarray}
Therefore we rewrite equation $\frac{\partial V}{\partial \varphi}=0$ as
\begin{equation*}
  \frac{\partial V_0}{\partial \varphi}(r,\varphi) +
  \frac{m_2}{m_3}  \frac{\partial V_0}{\partial \varphi}(r,\varphi + \pi/3)=0.  \label{eq:smass-ang-eq}
\end{equation*}
where $m_3=1-m_1-m_2$.

The implicit function theorem implies that
\begin{equation*}
  \varphi(r)=\varphi_0(r) + m_2 \delta(r,m_1,m_2), \label{eq:varph-delta}
\end{equation*}
where $\delta(r,m_1,m_2)$ is analytic and $\varphi_0(r)$ is any of the four curves satisfying
\begin{equation*}
  \frac{\partial V_0}{\partial \varphi}(r,\varphi_0(r))=0.
\end{equation*}
Below we develop constructive estimates.

\subsubsection{Bounds for $\delta(r,m_1,m_2)$}
\label{subsec:bdelta}
We want to solve Equation \eqref{eq: partial_V_partial_phi} for
$\varphi(r,m_1,m_2)=\varphi_0(r) + \Delta(r,m_1,m_2)$, where $\varphi_0(r)$ is any of the four branches
of solutions of $\frac{\partial V_0}{\partial \varphi}(r,\varphi_0(r))=0$.

Let us set (compare (\ref{eq:V_0dfi/r^2}))
\begin{equation}
  f_0(r,\varphi):=\frac{1}{r^2} \frac{\partial V_0}{\partial \varphi}(r,\varphi) \label{eq:f0def}
\end{equation}
and then
\begin{eqnarray}
  f(r,\varphi)&=&\frac{1}{r^2} \left(\frac{\partial V_0}{\partial \varphi}(r,\varphi) +
  \frac{m_2}{m_3}  \frac{\partial V_0}{\partial \varphi}(r,\varphi + \pi/3)\right) =   \label{eq:fsmass-ang-eq} \\
  &=& f_0(r,\varphi) + \frac{m_2}{m_3} f_0(r,\varphi + \pi/3) \nonumber
\end{eqnarray}

Equation (\ref{eq: partial_V_partial_phi}) becomes
\begin{equation}
  f(r,\varphi)=0.  \label{eq:f0r0}
\end{equation}

From Expansion (\ref{eq:r3-3exp}) we obtain the following result.
\begin{lemma}
\label{lem:estm-g-r33}
Let $R_0 <1$. Then
\begin{equation}
  \frac{1}{r} \left(1-\frac{1}{r_3^3(r,\varphi)} \right)= 3 \cos \varphi +
  r \left(\frac{3 \sin^2(\varphi)}{2}-6 \cos^2(\varphi)\right) + r^2 g(r,\varphi),
    \label{eq:1p3r33}
\end{equation}
where for $r \leq R_0$ it holds that
\begin{eqnarray*}
  |g(r,\varphi)| < D_1, \qquad 
\left|\frac{\partial g}{\partial \varphi}(r,\varphi)\right| < D_2, \quad
     \qquad \left|\frac{\partial^2 g}{\partial \varphi^2}(r,\varphi)\right| < D_3,
     \qquad \left|\frac{\partial g}{\partial r}(r,\varphi)\right| < D_4,\\
     \qquad \left|\frac{\partial^2 g}{\partial r\partial \varphi}(r,\varphi)\right| < D_5,
\end{eqnarray*}
for some positive constants $D_1$, $D_2$, $D_3$, $D_4$ and $D_5$.

In particular, for $R_0=10^{-3}$ we have constants \\
$D_1 = 10.0662, D_2 = 16.665, D_3 = 45.4482$, \\
 $D_4= 98.4105, D_5= 173.801$. 
The code
with its proof is named \textbf{LEMMA\_gs}.
\end{lemma}
\begin{proof}
The idea of the proof is that with the help of Automatic Differentiation and Interval arithmetics we can
compute the Taylor expansion of the left hand side of \eqref{eq:1p3r33} in any interval, call it $f(r,\varphi)$.
Then, since $f(r, \varphi) = r^3g(r, \phi)$, $g(r, \phi)\in f_{3,0}([0, R_0]\times[0, 2\pi])$ ($f_{3,0}$
is the coefficient $(3,0)$ of $f$). Also, $\partial_\varphi g\in f_{3,1}$ and $\partial_{\varphi,\varphi}g
\in 2f_{3,2}$. For the partial derivatives of $g$ with respect to $r$, one uses
$r \partial_r f-3f=r^4 \partial_r g$ and proceed as before replacing the computations on $f$ with
$r\partial_r f-3f$.
\end{proof}

We will need some expressions for the derivatives of $f_0$.
From (\ref{eq:f0def},\ref{eq:dV0dfi}) it follows that
\begin{equation}
f_0(r,\varphi)=-\frac{1}{r} \sin \varphi \left( 1- \frac{1}{r_3^3} \right). \label{eq:f0-def}
\end{equation}
From (\ref{eq:f0-def}) and (\ref{eq:1p3r33}) in Lemma~\ref{lem:estm-g-r33} we obtain
\begin{eqnarray}
f_0(r,\varphi)&=&-\sin \varphi \left(3 \cos \varphi + r \left(\frac{3 \sin^2(\varphi)}{2}
-6 \cos^2(\varphi)\right) + r^2 g(r,\varphi) \right)  \label{eq:f0exp} \\
&=&-\frac{3}{2} \sin (2\varphi) + r \sin \varphi \left(\frac{15}{2} \sin^2\varphi - 6 \right) - r^2 \sin \varphi g(r,\varphi) \nonumber \\
\frac{\partial f_0}{\partial \varphi} (r,\varphi)&=& -3 \cos(2\varphi) + \label{eq:df0dfiexp} \\
&+& r \cos \varphi \left(\frac{45}{2} \sin^2 \varphi - 6 \right)
- r^2 \left( \cos \varphi g(r,\varphi) + \sin \varphi \frac{\partial g}{\partial \varphi}(r,\varphi)\right), \nonumber \\
\frac{\partial^2 f_0}{\partial \varphi^2} (r,\varphi)&=&  6 \sin(2\varphi) +
r \left( \sin \varphi \left(45 \sin \varphi \cos \varphi-\frac{45}{2} \sin^2 \varphi + 6 \right)\right) +
\label{eq:d2f0dfi}\\
&-& r^2 \left( -\sin \varphi g(r,\varphi) + 2 \cos\varphi \frac{\partial g}{\partial \varphi} +
\sin \varphi \frac{\partial^2 g}{\partial \varphi^2} \right), \nonumber\\
\frac{\partial f_0}{\partial r}(r,\varphi)&=&\sin \varphi \left(\frac{15}{2} \sin^2\varphi - 6 \right) - 2 r \sin \varphi g(r,\varphi)
-r^2 \sin\varphi\frac{\partial g}{\partial r}(r, \varphi),\label{eq:df0dr} \\
\frac{\partial^2 f_0}{\partial r\partial \varphi}(r,\varphi)&=&
\cos \varphi \left(\frac{45}{2} \sin^2\varphi - 6 \right) - 2 r \cos \varphi g(r,\varphi)+\label{eq:d2f0drdphi} \\
&-&2r\sin\varphi\frac{\partial g}{\partial\varphi}(r,\varphi)
-r^2 \cos\varphi\frac{\partial g}{\partial r}(r, \varphi)
-r^2 \sin\varphi\frac{\partial^2 g}{\partial r\partial\varphi}(r, \varphi).
\nonumber
\end{eqnarray}

\subsubsection{Estimates on $\delta$}

Here we establish the following theorem, bounding the solution curves $\varphi(r)$ of the following equation ${\frac{\partial V}{\partial \varphi}(r,\varphi_0(r))=0}$. 

\begin{theorem}
\label{thm:estm-delta}
Consider equation (\ref{eq:f0r0}). 
Let $\varphi_0(r)$ be any of the four
corresponding branch of solutions of $\frac{\partial V_0}{\partial \varphi}=0$,
let $\alpha$ be a positive number and let $J$ be any of the intervals centered 
at $\left\{0, \frac\pi 2, \pi, \frac{3\pi}2\right\}$ with width $\frac\alpha 2$. 

With $R_0$, $m_1, m_2$, and $D_1$, $D_2$, $D_3$ 
be as in Lemma~\ref{lem:estm-g-r33} we obtain the bounds
\begin{eqnarray}
    Z_0(f_0,r)&=& \sup_{\varphi \in \mathbb{R}} |f_0(r,\varphi)| \leq  
\frac{3}{2} + \frac{5}{2}r + r^2 D_1, \label{eq:C0f0r} \\
  d_0(\alpha,r)&=& \inf_{\varphi \in J} 
\left|\frac{\partial f_0}{\partial \varphi}(r,\varphi)\right|
  \geq 3 \cos(\alpha) - 6 r  -r^2\sqrt{D_1^2 + D_2^2}, \label{eq:d0} \\
  d_f(r)&=& \sup_{\varphi \in \mathbb{R}} 
\left|\frac{\partial f_0}{\partial \varphi}(r,\varphi)\right|
  \leq 3 + 6 r + r^2 (D_1 + D_2). \label{eq:dfr}
\end{eqnarray}
Assume that $0<R_1 \leq R_0$ is such that
\begin{eqnarray}
  d_0(\alpha,R_1) &>& \frac{m_2}{m_3}d_f(R_1) \label{eq:d0gdfr}  \\
   |\varphi_0(0)- \varphi_0(R_1)| +  \frac{m_2}{m_3} \frac{Z_0(f_0,R_1)}{d_0(\alpha,R_1)}  &\leq& \alpha/2.   \label{eq:containedInGamma}
\end{eqnarray}
Then any solution  $\varphi(r)$ of (\ref{eq:f0r0}) is defined for $0 <r \leq R_1$ and satisfies
\begin{equation}
  \varphi(r)=\varphi_0(r) + m_2 \delta(r,m), 
\qquad |\delta(r,m)| \leq \frac{ Z_0(f_0,r)}{m_3  d_0(\alpha,r)}. \label{eq:delta-estm}
\end{equation}

In particular, this Theorem is true for $m_1, m_2\in [0, 10^{-2}]$, 
$R\leq 10^{-3}$, $\alpha=\frac{\pi}{4}$ and constants $D_1, D_2$ and
$D_3$ as in Lemma \ref{lem:estm-g-r33}. 
The proof is in the code \textbf{Theorem\_phis} and we obtain
\[|\delta(r, m_1,m_2)|=|\delta([0, 10^{-3}], [0,10^{-2}]^2)|\leq 0.724802.\]
\end{theorem}

\noindent
\textbf{Proof:}

The basic idea of the proof is to find $\Delta$, such that $f(r,\varphi_0(r)\pm \Delta)$ have opposite signs and  $\frac{\partial f}{\partial \varphi}(r,\varphi_0(r) + [-\Delta,\Delta])$ is positive or negative. We should also obtain that $\Delta=O(m_2)$.

By combining assumptions  (\ref{eq:d0},\ref{eq:dfr}) we obtain that for $\varphi \in J$,  and $r \leq R_0$ holds
\begin{equation}
  \left|\frac{\partial f}{\partial \varphi}(r,\varphi) \right| \geq d_0(\alpha,r) - \frac{m_2}{m_3}d_f(r). \label{eq:dfbelow}
\end{equation}
From assumption (\ref{eq:d0gdfr}) it follows that for a fixed $r \in (0,R_1]$ equation (\ref{eq:f0r0}) has at most one solution. This is in fact already contained in Lemma~\ref{lem:vr-fp-int-estm}, but here it is made explicit.

Let us fix $0 < \Delta \leq \alpha/2$.
From assumptions (\ref{eq:containedInGamma})  and  (\ref{eq:d0}) we see that
$f_0(r,\varphi_0(r)-\Delta)$ and $f_0(r,\varphi_0(r)+\Delta)$ have  opposite signs and
\begin{eqnarray}
  |f_0(r,\varphi_0(r)\pm \Delta)| \geq \Delta d_0(\alpha,r). \label{eq:f0Delta}
\end{eqnarray}
To prove that
$f(r,\varphi_0(r)-\Delta)$ and $f(r,\varphi_0(r)+\Delta)$ have opposite signs we require that (we use (\ref{eq:f0Delta}) and (\ref{eq:C0f0r}) )
\begin{equation}
  \Delta d_0(\alpha,r) \geq  \frac{m_2}{m_3} Z_0(f_0,r).
\end{equation}
Therefore we have proved that
\begin{equation}
  \varphi(r)=\varphi_0(r) + m_2 \delta(r,m), \qquad |\delta(r,m)| \leq \frac{ Z_0(f_0,r)}{m_3  d_0(\alpha,r)}
\end{equation}
 provided that it holds
\begin{equation}
  \varphi_0(r) \pm \frac{m_2}{m_3} \frac{Z_0(f_0,r)}{d_0(\alpha,r)} \subset J.
\end{equation}
\qed

\subsubsection{Estimates on $\frac{\partial }{\partial r}\delta(r,m)$}

We would like to find explicit bounds on $\frac{\partial \delta}{\partial r}(r,m)$, which will be $O(1)$ for $m_2 \to 0$.

We will work under assumptions of Theorem~\ref{thm:estm-delta}.

Let $\Delta(r;m)=m_2 \delta(r;m)$. We will use the notation $\delta'(r)=\frac{\partial \delta}{\partial r}(r,m)$
and  $\Delta'(r)=\frac{\partial \Delta}{\partial r}(r,m)$

Differentiating Equation \eqref{eq:f0r0} with respect to $r$ we obtain  (we write $\varphi(r)=\varphi_0(r) + \Delta(r)$)
\begin{eqnarray*}
 & & 0=\frac{\partial f_0}{\partial r}(r,\varphi(r)) + \frac{m_2}{m_3} \frac{\partial f_0}{\partial r}(r,\pi/3 + \varphi(r))+ \\
 & &   \left(\frac{\partial f_0}{\partial \varphi}(r,\varphi(r)) +
 \frac{m_2}{m_3} \frac{\partial f_0}{\partial \varphi}(r,\pi/3 + \varphi(r)) \right) \left( \varphi_0'(r) + \Delta'(r) \right)
\end{eqnarray*}
Hence
\begin{eqnarray}
  \Delta'(r)&=&  -\left(\frac{\partial f_0}{\partial \varphi}(r,\varphi(r)) +
  \frac{m_2}{m_3} \frac{\partial f_0}{\partial \varphi}(r,\pi/3 + \varphi(r)) \right)^{-1} \cdot  \label{eq:Delta'} \\
 & & \left( \frac{\partial f_0}{\partial r}(r,\varphi(r)) +
 \frac{m_2}{m_3} \frac{\partial f_0}{\partial r}(r,\pi/3 + \varphi(r)) \right) -\varphi_0'(r). \nonumber
\end{eqnarray}

We will now estimate various terms in \eqref{eq:Delta'} in order to show that it is $O(m_2)$ times a bounded function.

Since by the definition of curves $\varphi_0(r)$ it holds that
\begin{eqnarray*}
   f_0(r,\varphi_0(r))=0,
\end{eqnarray*}
we obtain that
\begin{eqnarray*}
 0 &=& \frac{\partial f_0}{\partial r}(r,\varphi_0(r)) +  \frac{\partial f_0}{\partial \varphi}(r,\varphi_0(r)) \varphi'_0(r),  \\
  \varphi'_0(r) &=& - \left( \frac{\partial f_0}{\partial \varphi}(r,\varphi_0(r)) \right)^{-1}
  \frac{\partial f_0}{\partial r}(r,\varphi_0(r)).
\end{eqnarray*}

With the help of the Taylor theorem we obtain that there exists $\theta_1(r) \in (0,1)$ such that
\begin{eqnarray*}
  \frac{\partial f_0}{\partial \varphi}(r,\varphi(r)) =
 \frac{\partial f_0}{\partial \varphi}(r,\varphi_0(r)) +
 m_2 \frac{\partial^2 f_0}{\partial \varphi^2}(r,\varphi_0(r) + \theta_1(r) \Delta(r))\delta(r),
\end{eqnarray*}
implying that
\begin{eqnarray}
  \frac{\partial f_0}{\partial \varphi}(r,\varphi(r)) +  \frac{m_2}{m_3} \frac{\partial f_0}{\partial \varphi}(r,\pi/3 + \varphi(r))=
  \frac{\partial f_0}{\partial \varphi}(r,\varphi_0(r))\left(1 +  m_2 h(r) \right),  \label{eq:delta'-term-inv}
\end{eqnarray}
where
\begin{eqnarray*}
   h(r)&=& \left( \frac{\partial^2 f_0}{\partial \varphi^2}(r,\varphi_0(r) + \theta_1(r) \Delta(r))\delta(r) +
  \frac{1}{m_3} \frac{\partial f_0}{\partial \varphi}(r,\pi/3 + \varphi(r)) \right) \cdot \\
    &\cdot& \left( \frac{\partial f_0}{\partial \varphi}(r,\varphi_0(r)) \right)^{-1}.
\end{eqnarray*}
Observe that $h(r)=O(1)$. Indeed from (\ref{eq:C0f0r},\ref{eq:d0}) we have
\begin{eqnarray}
  \left| \frac{\partial f_0}{\partial \varphi}(r,\pi/3 + \varphi_0(r) + \Delta(r))
  \right| &\leq& d_f(r), \quad 0<r \leq R_0,  \label{eq:2nd-term-h} \\
  \left|\frac{\partial f_0}{\partial \varphi}(r,\varphi_0(r))\right|^{-1} &\leq& \frac{1}{d_0(\alpha,r)}. \label{eq:in-h-inv}
\end{eqnarray}
From (\ref{eq:d2f0dfi}) we have for $r \leq R_1$ and $\varphi \in \mathbb{R}$
\begin{eqnarray*}
d_2(r):=  \left| \frac{\partial^2 f_0}{\partial \varphi^2}(r,\varphi)\right| < 6 + 27 r + r^2(C_1 + 2C_2 + C_3).\\
\end{eqnarray*}

From this and the estimate (\ref{eq:delta-estm}) for $\delta(r)$ we obtain
\begin{eqnarray}
   \left|\frac{\partial^2 f_0}{\partial \varphi^2}(r,\varphi_0(r) +
   \theta_1(r) \Delta(r))\delta(r)\right| < \frac{d_2(r) Z_0(f_0,r)}{m_3  d_0(\alpha,r)}. \label{eq:1st-term-h}
\end{eqnarray}
Combining (\ref{eq:1st-term-h},\ref{eq:2nd-term-h},\ref{eq:in-h-inv}) we obtain
\begin{eqnarray}
  |h(r)| < \left(\frac{d_2(r) Z_0(f_0,r)}{m_3  d_0(\alpha,r)} + \frac{d_f(r)}{m_3} \right)
  \frac{1}{d_0(\alpha,r)}. \label{eq:delta'-h-estm}
\end{eqnarray}

Therefore from (\ref{eq:delta'-term-inv}) and the above estimates we infer that
\begin{eqnarray*}
  \left(\frac{\partial f_0}{\partial \varphi}(r,\varphi(r)) +  \frac{m_2}{m_3} \frac{\partial f_0}{\partial \varphi}(r,\pi/3 + \varphi(r))\right)^{-1}= \\
  \left(\frac{\partial f_0}{\partial \varphi}(r,\varphi_0(r))\right)^{-1} +  m_2 h_2(r),
\end{eqnarray*}
where
\begin{equation}
  h_2(r)=\left(\frac{\partial f_0}{\partial \varphi}(r,\varphi_0(r))\right)^{-1}\frac{(1+m_2 h(r))^{-1}-1}{m_2}
\end{equation}
Observe that
\begin{equation*}
(1+m_2 h(r))^{-1} - 1= \sum_{k=1}^\infty (-1)^k (m_2 h(r))^k=\frac{-m_2 h(r)}{1+m_2 h(r)}
\end{equation*}
hence $h_2(r)=O(1)$ and, more explicitly,
\[
|h_2(r)|\leq \frac{1}{d_0(\alpha, r)}\frac{|h(r)|}{1-m_2 |h(r)|}.
\]

We have for some $\theta_2(r) \in (0,1)$
\begin{eqnarray}
  \frac{\partial f_0}{\partial r}(r,\varphi(r)) = \frac{\partial f_0}{\partial r}(r,\varphi_0(r)) +
m_2 \frac{\partial^2 f_0}{\partial r \partial \varphi}(r,\varphi_0(r)+ \theta_2(r) \Delta(r)) \delta(r). \label{eq:estm-dfo-dr}
\end{eqnarray}

We have
\begin{eqnarray*}
\Delta'(r)&=&  \left(\frac{\partial f_0}{\partial \varphi}(r,\varphi_0(r))\right)^{-1}
\frac{\partial f_0}{\partial r}(r,\varphi_0(r)) + \\
 &-& \left( \left(\frac{\partial f_0}{\partial \varphi}(r,\varphi_0(r))\right)^{-1} +  m_2 h_2(r)  \right) \cdot \\
 &\cdot& \left(\frac{\partial f_0}{\partial r}(r,\varphi_0(r)) +
m_2 \frac{\partial^2 f_0}{\partial r \partial \varphi}(r,\varphi_0(r)+ \theta_2(r) \Delta(r)) \delta(r) \right. \\
& & + \left. \frac{m_2}{m_3} \frac{\partial f_0}{\partial r}(r,\pi/3 + \varphi_0(r) + \Delta(r))  \right)= \\
 &=& - \left( \left(\frac{\partial f_0}{\partial \varphi}(r,\varphi_0(r))\right)^{-1} +  m_2 h_2(r)  \right) \cdot \\
 &\cdot& \left(m_2 \frac{\partial^2 f_0}{\partial r \partial \varphi}(r,\varphi_0(r)+ \theta_2(r) \Delta(r)) \delta(r)
+\frac{m_2}{m_3} \frac{\partial f_0}{\partial r}(r,\pi/3 + \varphi(r))  \right) + \\
&-& m_2 h_2(r) \frac{\partial f_0}{\partial r}(r,\varphi_0(r))
\end{eqnarray*}
It is clear that $\Delta'(r)=m_2 O(1)$. More explicitly,
\[
|\Delta'(r)|\leq \frac{1}{d_0(\alpha, r)}\left(1+m_2\frac{|h(r)|}{1-m_2|h(r)|}\right)
\left(m_2 d_4(r)+\frac{m_2}{m_3}d_3(r)\right)+m_2\frac{d_3(r)}{d_0(\alpha, r)}\frac{|h(r)|}{1-m_2|h(r)|},
\]
where
\[
d_3(r):=\sup_{\varphi\in\mathbb R}\left|\frac{\partial f_0}{\partial r}(r, \varphi)\right|\leq
\frac{5}{2}+2rC_1+r^2C_4
\]
and
\[
d_4(r):=\sup_{\varphi\in\mathbb R}\left|\frac{\partial^2 f_0}{\partial r\partial \varphi}(r, \varphi)\right|\leq
6+2r(C_1+C_2)+r^2(C_4+C_5).
\]

Hence, we have that
\[
|\delta'(r; m1, m2)|=|\delta'([0, 10^{-3}]; [0, 10^{-2}]^2)|\leq
8.56695
\]
\[
|\Delta'(r; m1, m2)|=|\Delta'([0, 10^{-3}]; [0, 10^{-2}]^2)|\leq
0.0856695
\]

\subsubsection{Non-degeneracy of the solutions for $m_1, m_2\leq 10^{-2}$ and 
$R\leq 10^{-3}$.}

The following theorem summarizes all the expansions for the four solutions, proving 
in particular the non-degeneracy 
($\frac{d}{dr}\left(\frac{\partial V}{\partial r}(r, \varphi(r))\right)\neq 0$).

Moreover, all solutions are nondegenerate.
\begin{theorem}
From the codes \textbf{Existence\_results} and \textbf{Existence\_results\_derivative},
and for masses $m_1, m_2\in[0, 10^{-2}]$ and $r\leq 10^{-3}$ we have that
\begin{itemize}
\item
\begin{eqnarray*}
\varphi(r)&\in &\frac\pi{2}+\frac{r}{2}+\frac{r^3}{48}+[ 0,
0.00119]r^4+m_2\delta(r), \\
\frac{\partial V}{\partial r}(r, \varphi(r))&\in&
m_1 r+m_2(\frac{9}{4}r+[  1.8369662715,  2.7184275039]r^2) \\
&+& m_2^2([ -1.3099756791,  2.3608417928]r) \\
&+&  m_1m_2([ -0.0242767833,  0.0242767833]r)-
\frac{m_1}{r^2} \in\\
&&m_1 r+[  2.2366574753,  2.2765696133]m_2 r-\frac{m_1}{r^2},\\
\frac{d}{dr}\left(\frac{\partial V}{\partial r}(r, \varphi(r))\right)&\in &
[  0.9997275169,  1.0000428593]m_1+\\
&&[  2.1720852328,  2.3307223616] m_2+2m_1/r^3
\end{eqnarray*}
\item
\begin{eqnarray*}
\varphi(r)&\in&\frac{3\pi}{2}-\frac{r}{2}-\frac{r^3}{48}-[ 0,
0.0001171998]r^4+m_2\delta(r), \\
\frac{\partial V}{\partial r}(r, \varphi(r))&\in&
m_1 r+m_2(\frac{9}{4}r+[ -2.7094865798, -1.8290229479]r^2)\\
& & m_2^2([ -1.3055557029,  2.3564218165]r)\\
& &m_1m_2([ -0.0242767833,  0.0242767833]r)-
\frac{m_1}{r^2} \in\\
&&m_1 r+[  2.2339921885,  2.2738069860]m_2 r-\frac{m_1}{r^2},\\
\frac{d}{dr}\left(\frac{\partial V}{\partial r}(r, \varphi(r))\right)&\in&
[  0.9997275169,  1.0000428593]m_1+\\
&&[  2.1720852328,  2.3307223616]m_2+2m_1/r^3
\end{eqnarray*}

\item
\begin{eqnarray*}
\varphi(r)&=&0+m_2\delta(r) \\
\frac{\partial V}{\partial r}(r, \varphi(r))&=&
3r+[ -3.0990751276, -2.8895899185]r^2 \\
&+&
m_1[ -2.0120180121, -1.9819212587]r \\
&+&
m_2[ -2.3047529557, -2.1865555824]r
-\frac{m_1}{r^2}\\
&\in&
[  2.9969009248,  3.0000000000]r \\
&& +[ -2.0120180121, -1.9819212587]m_1 r\\
&&+[ -2.3047529557, -2.1865555824]m_2 r-\frac{m_1}{r^2}\\
\frac{d}{dr}\left(\frac{\partial V}{\partial r}(r, \varphi(r))\right)&\in&
[  2.9419980924,  3.0090010494]+2m_1/r^3
\end{eqnarray*}

\item
\begin{eqnarray*}
\varphi(r)&=&\pi+m_2\delta(r) \\
\frac{\partial V}{\partial r}(r, \varphi(r))&=&
3r+[  2.9009801393,  3.1116899497]r\\
&+&
m_1[ -2.0180903512, -1.9878489103] r \\
&+&
m_2[ -2.3135901858, -2.1950040940] r
-\frac{m_1}{r^2}\\
&\in&[  3.0000000000,  3.0031116900] r+
\\ && +[ -2.0180903512, -1.9878489103]m_1 r+\\
&&+[ -2.3135901858, -2.1950040940]m_2 r-\frac{m_1}{r^2}\\
\frac{d}{dr}\left(\frac{\partial V}{\partial r}(r, \varphi(r))\right)&\in&
[  2.9477930888,  3.0150582104]+2m_1/r^3
\end{eqnarray*}
\end{itemize}

\end{theorem}

\end{document}